\def\mtiny#1{\scriptscriptstyle{#1}}
\def\arraypar#1{\parbox[c]{\textwidth - 2cm}{\centering #1}}
\def\clap#1{\hbox to 0pt{\hss#1\hss}}
\def\mathclap{\mathpalette\mathclapinternal}
\def\mathclapinternal#1#2{\clap{$\mathsurround=0pt#1{#2}$}}
\makeatletter \@addtoreset{equation}{section}
\makeatletter \@addtoreset{enunciato}{section}
\newcounter{enunciato}[section]
\newtheorem{ittheorem}{Theorem}
\newtheorem{itlemma}{Lemma}
\newtheorem{itproposition}{Proposition}
\newtheorem{itdefinition}{Definition}
\newtheorem{itremark}{Remark}
\newtheorem{itclaim}{Claim}
\newtheorem{itfact}{Fact}
\newtheorem{itconjecture}{Conjecture}
\newtheorem{itcorollary}{Corollary}
\newenvironment{theorem}{\addtocounter{enunciato}{1}
\begin{ittheorem}}{\end{ittheorem}}
\newenvironment{lemma}{\addtocounter{enunciato}{1}
\begin{itlemma}}{\end{itlemma}}
\newenvironment{proposition}{\addtocounter{enunciato}{1}
\begin{itproposition}}{\end{itproposition}}
\newenvironment{definition}{\addtocounter{enunciato}{1}
\begin{itdefinition}}{\end{itdefinition}}
\newenvironment{remark}{\addtocounter{enunciato}{1}
\begin{itremark}}{\end{itremark}}
\newenvironment{conjecture}{\addtocounter{enunciato}{1}
\begin{itconjecture}}{\end{itconjecture}}
\newenvironment{corollary}{\addtocounter{enunciato}{1}
\begin{itcorollary}}{\end{itcorollary}}
\newcommand{\be}[1]{\begin{equation}\label{#1}}
\newcommand{\ee}{\end{equation}}
\newcommand{\bl}[1]{\begin{lemma}\label{#1}}
\newcommand{\el}{\end{lemma}}
\newcommand{\br}[1]{\begin{remark}\label{#1}}
\newcommand{\er}{\end{remark}}
\newcommand{\bt}[1]{\begin{theorem}\label{#1}}
\newcommand{\et}{\end{theorem}}
\newcommand{\bd}[1]{\begin{definition}\label{#1}}
\newcommand{\ed}{\end{definition}}
\newcommand{\bp}[1]{\begin{proposition}\label{#1}}
\newcommand{\ep}{\end{proposition}}
\newcommand{\bc}[1]{\begin{corollary}\label{#1}}
\newcommand{\ec}{\end{corollary}}
\newcommand{\bcj}[1]{\begin{conjecture}\label{#1}}
\newcommand{\ecj}{\end{conjecture}}
\newcommand{\bpr}{\begin{proof}}
\newcommand{\epr}{\end{proof}}
\DeclareMathOperator\supp{supp}
\DeclareMathOperator\dist{dist}
\DeclareMathOperator\Cov{Cov}
\DeclareMathOperator\Exp{Exp}
\DeclareMathOperator\Trace{Trace}
\DeclareMathOperator\Poisson{Poisson}
\DeclareMathOperator\Uniform{Uniform}
\def\d{\mathrm{d}}
\def\m{m}
\def\Z{\mathbb{Z}}
\def\N{\mathbb{N}}
\def\R{\mathbb{R}}
\def\P{\mathbb{P}}
\def\E{\mathbb{E}}
\newcommand{\1}[1]{{\mathbbm{1}}_{#1}}
\def \ba {\begin{array}}
\def \ea {\end{array}}
\def \P  {{\mathbb P}}
\def \E  {{\mathbb E}}
\def \cW {{\mathcal W}}
\def \cT {{\mathcal T}}
\DeclareSymbolFont{symbolsC}{U}{pxsyc}{m}{n}
\DeclareMathSymbol{\opentimes}{\mathrel}{symbolsC}{93}
\newcommand{\um}{{\angle}}
\newcommand{\tres}{{\mathbin{\, \text{\rotatebox[origin=c]{180}{$\angle$}}}}}
\newcommand{\treze}{{\mathbin{\text{\rotatebox[origin=c]{35}{$\opentimes$}}}}}
\newcounter{constant}
\newcommand{\newconstant}[1]{\refstepcounter{constant}\label{#1}}
\newcommand{\useconstant}[1]{c_{\textnormal{\tiny \ref{#1}}}}
\newcolumntype{e}{>{\displaystyle}r @{\,} >{\displaystyle}c @{\,} >{\displaystyle}l}
\begin{document}


\title{Random Walk on Random Walks}

\author{\renewcommand{\thefootnote}{\arabic{footnote}}
M.\ Hil\'ario\footnotemark[1] \footnotemark[2]  , \;
\renewcommand{\thefootnote}{\arabic{footnote}}
F.\ den Hollander\footnotemark[3] , \;
\renewcommand{\thefootnote}{\arabic{footnote}}
R.S.\ dos Santos\footnotemark[4] , \;\\
\renewcommand{\thefootnote}{\arabic{footnote}}
V.\ Sidoravicius\footnotemark[5] \footnotemark[6] \footnotemark[7]
\renewcommand{\thefootnote}{\arabic{footnote}}
A.\ Teixeira\footnotemark[5]}

\footnotetext[1]{Universidade Federal de Minas Gerais, Dep. de
Matem\'atica, 31270-901 Belo Horizonte}
\footnotetext[2]{Universit\'e de Gen\`eve, Section de Math\'ematiques, 2-4 Rue du Li\`evre, 1211 Gen\`eve}

\footnotetext[3]{Mathematical Institute, Leiden University, P.O.\ Box 9512,
2300 RA Leiden}

\footnotetext[4]{Weierstrass Institute for Applied Analysis and Stochastics, Mohrenstr.\ 39, 10117 Berlin}

\footnotetext[5]{Instituto Nacional de Matem\'atica Pura e Aplicada,
Estrada Dona Castorina 110, 22460-320 Rio de Janeiro}

\footnotetext[6]{Courant Institute, NYU, 251 Mercer Street New York, NY 10012}
\footnotetext[7]{NYU-Shanghai, 1555 Century Av., Pudong Shanghai, CN 200122}

\maketitle

\begin{abstract}
In this paper we study a random walk in a one-dimensional dynamic random environment
consisting of a collection of independent particles performing simple symmetric random walks 
in a Poisson equilibrium with density $\rho \in (0,\infty)$. At each step the random walk  
performs a nearest-neighbour jump, moving to the right with probability $p_{\circ}$ when it is 
on a vacant site and probability $p_{\bullet}$ when it is on an occupied site. Assuming 
that $p_\circ \in (0,1)$ and $p_\bullet \neq \tfrac12$, we  show that the position of the random 
walk satisfies a strong law of large numbers, a functional central limit theorem and a large 
deviation bound, provided $\rho$ is large enough. The proof is based on the construction of 
a renewal structure together with a multiscale renormalisation argument.

\vspace{0.5cm}
\noindent {\it MSC} 2010.
Primary 60F15, 60K35, 60K37; Secondary 82B41, 82C22, 82C44.\\
{\it Key words and phrases.}
Random walk, dynamic random environment, strong law of large numbers, functional central 
limit theorem, large deviation bound, Poisson point process, coupling, renormalisation, 
regeneration times.
\end{abstract}


\section{Introduction and main results}
\label{s:intro}

{\bf Background.}
Random motion in a random medium is a topic of major interest in mathematics, physics 
and (bio-)chemistry. It has been studied at microscopic, mesoscopic and macroscopic 
levels through a range of different methods and techniques coming from numerical, 
theoretical and rigorous analysis.

Since the pioneering work of Harris \cite{Ha}, there has been much interest in studies of 
random walk in random environment within probability theory (see \cite{HMZ} for an overview), 
both for static and dynamic random environments, and a number of deep results have been
proven for various types of models.

In the case of dynamic random environments, analytic, probabilistic and ergodic 
techniques were invoked (see e.g.\ \cite{AvdHoRe11}, \cite{BhZtn}, \cite{BoCoFrPe05}--\cite{BoMiPe09},  \cite{dkl}, \cite{dk}, \cite{JoRa11}, \cite{rghspp}, \cite{ReVo13}), 
but \emph{good mixing assumptions on the environment} remained  a pivotal requirement. 
By good mixing we mean that the decay of space-time correlations is sufficiently fast -- 
polynomial with a sufficiently large degree -- and uniform in the initial configuration. More 
recently, examples of dynamic random environments with non-uniform mixing have been 
considered (see e.g.\ \cite{dHodSa}, \cite{MoVa}, \cite{BiCeDeGa13}, \cite{AvdSaVo}). However, in all of these 
examples either the mixing is fast enough (despite being non-uniform), or the mixing is 
slow but strong extra conditions on the random walk  are required.

In this context, random environments consisting of a field of random walks moving independently 
gained significance, not only due to an abundance of models defined in this setup,  but also due 
to the substantial mathematical challenges that arise from their study. Among various conceptual 
and technical difficulties, slow mixing (in other words, slow convergence of the environment to its 
equilibrium as seen from the walk) makes the analysis of these systems extremely 
difficult. In particular, in physical terms, when ballistic behaviour occurs the motion of the walk 
is of ``pulled type'' (see \cite{Sar}).

In this paper we consider a dynamic random environment given by a system of independent 
random walks. More precisely, we consider a \emph{walk particle} that performs a discrete-time 
motion on $\Z$ under the influence of a field of \emph{environment particles} which themselves 
perform independent discrete-time simple random walks. As initial state for the environment 
particles we take an i.i.d.\ Poisson random field with mean $\rho \in (0,\infty)$. This makes the 
dynamic random environment invariant under translations in space and time. The jumps of the 
walk particle are drawn from two different random walk transition kernels on $\Z$, depending 
on whether the space-time position of the walk particle is occupied by an environment particle 
or not. For reasons of exposition we restrict to nearest-neighbour kernels, but our analysis 
easily extends to the case where the kernels have finite range.

\medskip\noindent
{\bf Model.}
Throughout the paper we write $\N=\{1,2,\dots\}$, ${\mathbb{Z}_+} = \N \cup \{0\}$ and ${\mathbb{Z}_-} 
= -\N \cup \{0\}$. Let $\{N(x)\colon\,x\in\Z\}$ be an i.i.d.\ sequence of Poisson random variables with 
mean $\rho \in (0,\infty)$. At time $n=0$, for each $x\in\Z$ place $N(x)$ environment particles at site 
$x$. Subsequently, let all the environment particles evolve independently as ``lazy simple random walks'' 
on $\Z$, i.e., at each unit of time the probability to step $-1$, $0$, $1$ equals $\tfrac12(1-q)$, $q$, 
$\tfrac12(1-q)$, respectively, for some $q \in (0,1)$. The assumption of laziness  is not crucial for our 
arguments, as explained in Comment 5 below.

Let $\mathcal{T}$ be the set of space-time points covered by the trajectory of at least one environment 
particle. The law of $\mathcal{T}$ is denoted by $P^\rho$ (see Section~\ref{ss:DRE} for a detailed 
construction of the dynamic environment and the precise definition of $\mathcal{T}$).  Note 
that $\mathcal{T}$  does not have good mixing properties. Indeed,
\begin{equation}\label{e:cov}
\Cov_\rho (\mathbbm{1}_{(0,0) \in \mathcal{T}}, \mathbbm{1}_{(0,n) \in \mathcal{T}}) \sim c(\rho) \frac{1}{n^{1/2}},
\end{equation}
where $\Cov_\rho$ denotes covariance with respect to $P^{\rho}$ (see 
\eqref{e:justcov1}--\eqref{e:justcov3} in Section~\ref{ss:DRE}.)

Given $\mathcal{T}$, let $X=(X_n)_{n\in{\mathbb{Z}_+}}$ be the nearest-neighbour random walk on 
$\Z$ starting at the origin and with transition probabilities
\begin{equation}
\label{e:trpr}
P^\mathcal{T}(X_{n+1}=x+1 \mid X_n = x) = \left\{\begin{array}{ll}
p_{\circ}, &\text{ if }(x,n) \notin \mathcal{T},\\
p_{\bullet}, &\text{ if } (x,n) \in \mathcal{T},
\end{array}
\right.
\end{equation}
where $p_{\circ},p_{\bullet} \in [0,1]$ are fixed parameters and $P^\mathcal{T}$ stands for the law of 
$X$ conditional on $\mathcal{T}$, called the \emph{quenched law}. The \emph{annealed law} is given 
by $\P^\rho(\cdot) = \int P^\mathcal{T}(\cdot)\,P^\rho(d\mathcal{T})$.

We will denote by
\begin{equation}
\label{e:defdrifts}
v_{\circ}=2p_{\circ}-1 \quad \text{ and } \quad v_{\bullet}=2p_{\bullet}-1
\end{equation}
the drifts at vacant and occupied sites, respectively. 
Following the terminology established in the literature on random walks in static random environments (see \cite{Sz2},  \cite{Ze98}),
we classify our model as follows.
\begin{definition}
The model is said to be non-nestling when $v_\circ v_\bullet > 0$. Otherwise it is said 
to be nestling.
\end{definition}

We are  now in the position to state our main results.

\begin{theorem}
\label{thm:main}
Let $v_\bullet \neq 0$ and  $v_\circ \neq -\mathrm{sign}(v_\bullet)$. Then there exist 
$\rho_\star \geq 0$ and $\gamma > 1$ such that for all $\rho \geq \rho_\star$ there exist 
$v = v(v_\circ, v_\bullet, \rho) \in [v_\circ \wedge v_\bullet, v_\circ \vee v_\bullet]$ and 
$\sigma = \sigma(v_\circ, v_\bullet, \rho) \in (0, \infty)$ such that:

\medskip\noindent 
(a) $\mathbb{P}^\rho$-almost surely,
\begin{equation}
\label{e:SLLN}
\lim_{n\to\infty} n^{-1} X_n = v.
\end{equation}

\noindent
(b) Under $\P^\rho$, the sequence of random processes
\begin{equation}
\label{e:CLT}
\left(\frac{X_{\lfloor n t \rfloor} - \lfloor nt \rfloor v}{n^{1/2}\sigma}\right)_{t \ge 0},
\qquad n\in\N,
\end{equation}
converges in distribution (in the Skorohod topology) to the standard Brownian motion.

\medskip\noindent
(c) For all $\varepsilon > 0$ there exists $c = c(v_\circ, v_\bullet, \rho, \varepsilon) \in (0,\infty)$ 
such that
\begin{equation}
\label{e:LD2}
\P^\rho \big( \exists \; t \geq n \colon\, |X_t - tv| > \varepsilon t \big)
\leq c^{-1} e^{-c \log^\gamma n} \qquad \forall \; n \in \N.
\end{equation}

\noindent
Moreover, in the non-nestling case $\rho_\star$ can be taken equal to $0$.
\end{theorem}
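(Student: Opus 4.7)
The plan is to build a regeneration structure for $X$ under $\P^\rho$, with inter-regeneration distances having stretched-exponential tails, and then read (a)--(c) off from this structure via classical renewal arguments. The obstacle compared with the classical dynamic-environment setup is that $\cT$ has only polynomial space-time correlations, as in \eqref{e:cov}, so neither ballisticity nor the regeneration tails can be obtained from a soft mixing input; instead, they will both be obtained through a multiscale renormalisation directly on the joint law of the walk and of the environment particle trajectories.

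First I would establish \emph{ballisticity on every scale}. Assume without loss of generality that $v_\bullet>0$. For a hierarchy of scales $L_k=L_0^{\alpha^k}$ with $\alpha>1$, call a space-time box of side $L_k$ \emph{good} if every copy of $X$ started in it travels at least $\bar v L_k$ to the right before exiting, where $\bar v>0$ is slightly smaller than the expected speed. The base case $k=0$ follows, for $\rho$ large, from the fact that with high probability every site the walk sees on time scale $L_0$ is occupied, so $X$ nearly follows a deterministic walk with drift $v_\bullet$; in the non-nestling case $v_\circ v_\bullet>0$ the base case holds for all $\rho\geq 0$, which is why then $\rho_\star=0$. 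The inductive step rests on two ingredients: (i) a deterministic geometric observation that a bad box at scale $L_{k+1}$ forces two disjoint bad sub-boxes at scale $L_k$, and (ii) a decoupling lemma stating that the environment restricted to two well-separated space-time regions can be coupled with a pair of independent copies up to a small error.

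The decoupling in (ii) is the main obstacle, because a single slow environment particle can link the two regions and thereby spoil independence. The way around this is to split the environment particles according to their initial position and to the diameter of their trajectory: using the Poisson law of the initial configuration, the two halves can be treated independently, while the (at most polynomially small) probability that a particle has an atypically large range can be absorbed into the recursive error budget. Iterating the induction along the scales $L_k$ yields a bound $P^\rho(\text{bad box at scale }L_k)\leq\exp(-c\log^\gamma L_k)$ for some $\gamma>1$, provided $\rho\geq\rho_\star$. Given this, I would define regeneration times $\tau_1<\tau_2<\cdots$ as successive times at which $X$ reaches a new maximum, never returns below it, and has been exposed only to environment particles lying inside a narrow forward cone emanating from $(X_{\tau_k},\tau_k)$. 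The multiscale estimate controls the probability of backtracking; the cone truncation, together with the decoupling lemma, gives genuine independence of the increments $(\tau_{k+1}-\tau_k,\,X_{\tau_{k+1}}-X_{\tau_k})$, with each increment inheriting the stretched-exponential tail.

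The three conclusions then follow by classical arguments. Part (a) is the SLLN for the i.i.d.\ pairs, with
\[
v \;=\; \frac{\E^\rho[X_{\tau_2}-X_{\tau_1}]}{\E^\rho[\tau_2-\tau_1]} \;\in\; [v_\circ\wedge v_\bullet,\,v_\circ\vee v_\bullet],
\]
the range containment coming from the fact that each single step of $X$ has conditional mean in $\{v_\circ,v_\bullet\}$. Part (b) is Donsker's FCLT applied to the polygonal interpolation of the regeneration partial sums, with $\sigma^2$ given by the usual regeneration variance formula and finite by the tail bound. For part (c), the event $\{\exists\,t\geq n:|X_t-tv|>\varepsilon t\}$ forces either an atypically small or atypically large number of regenerations in $[0,t]$; a moderate-deviation estimate for sums of independent random variables with stretched-exponential tails yields the announced $\exp(-c\log^\gamma n)$ rate, uniformly in $t\geq n$ after a summation over dyadic scales.
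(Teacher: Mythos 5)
Your overall architecture is the same as the paper's: multiscale renormalisation to prove a ballisticity condition, a cone-based regeneration structure with stretched-exponential tails, then classical renewal arguments for (a)--(c). The base case at large $\rho$ (all sites occupied), the non-nestling case giving $\rho_\star=0$, the formulas for $v$ and $\sigma^2$, and the moderate-deviation argument for (c) all match Sections~\ref{s:renorm}--\ref{s:regeneration}. Two minor imprecisions: the inductive step must produce two bad sub-boxes that are \emph{vertically well-separated} (the paper first extracts \emph{three} bad layers with distinct time coordinates and then picks two with gap $\geq L_k$), and the independence of the regeneration increments is exact by the Poisson structure of the trajectory field conditioned on $\omega(W^\treze_{Y_{R_k}})=0$, not an approximate consequence of the decoupling lemma.

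The genuine gap is in the decoupling step, which you identify as the key difficulty but then resolve incorrectly. You propose to split environment particles by the diameter of their trajectory and discard those with atypically large range. But long-range particles are not the obstruction here: the two boxes $B_1$ and $B_2$ in the renormalisation are separated only in time, not necessarily in space, and the dangerous particles are the \emph{sedentary} ones that sit near the common spatial projection at both time windows. These occur with probability of order $n^{-1/2}$ per site, as \eqref{e:cov} shows, so the event that no particle links $B_1$ and $B_2$ does not have high probability and cannot be absorbed into a recursive error budget. What is actually needed, and what the paper supplies via Theorem~\ref{t:decouple}, is to condition on the configuration at the separating time level and then stochastically dominate, from below, a fresh Poisson field of slightly \emph{lower} density using the soft local time coupling of Popov--Teixeira (Appendix~\ref{s:simwithPoisson}); the density loss is compensated by the ``sprinkling'' sequence $\rho_{k+1}=(1+L_k^{-1/16})\rho_k$ of \eqref{e:density}, which your proposal does not mention at all. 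Without the domination-plus-sprinkling mechanism the induction across scales does not close, so this step needs to be reworked.
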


\noindent
The difference between the nestling and the non-nestling case can be seen in the statement 
of Theorem~\ref{thm:main}: in the non-nestling case we can prove (a)--(c) for any 
$\rho \ge 0$, in the nestling case only for $\rho \geq \rho_\star$, where $\rho_\star$ will need
to be large enough.

Theorem~\ref{thm:main} will be obtained as a consequence of 
Theorems~\ref{thm:SLLN+CLT+LD}--\ref{thm:LD} and Remark~\ref{r:v_leq_v} below. 
Before stating them, we give the following definition that will be central to our analysis.
\begin{definition}
\label{def:BC}
For fixed $v_\circ, v_\bullet, \rho$ and a given $v_\star \in [-1,1]$, we say that the $v_\star$-ballisticity 
condition holds when there exist $c = c(v_\circ, v_\bullet, v_\star,\rho)> 0$ and $\gamma = \gamma
(v_\circ, v_\bullet, v_\star, \rho) >  1$ such that
\begin{equation}
\label{e:LD}
\P^\rho\big(\exists\, n \in \N\colon\, X_n < n v_\star-L\big) \leq c^{-1} e^{ -c \log^\gamma L }
\qquad \forall\,L\in\N.
\end{equation}
\end{definition}

\noindent
Condition \eqref{e:LD} is reminiscent of ballisticity conditions in the literature on random walks in static 
random environments, such as Sznitman's \emph{$(T')$-condition} (see \cite{Sz2}).

The next theorem shows that, if the model satisfies \eqref{e:LD} with $v_\star > 0$ as well as an 
ellipticity condition, then the asymptotic results stated in Theorem~\ref{thm:main} hold.

\begin{theorem}
\label{thm:SLLN+CLT+LD}
Let $v_\bullet > 0$, $v_\circ>-1$ and $\rho \in (0,\infty)$. Assume that \eqref{e:LD} holds for some 
$v_\star \in (0,1]$. Then the conclusions of Theorem~{\rm \ref{thm:main}} hold with $v \ge v_\star$.
\end{theorem}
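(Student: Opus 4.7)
The plan is to use the ballisticity condition \eqref{e:LD} together with the ellipticity hypotheses $v_\bullet > 0$ and $v_\circ > -1$ to construct a sequence of regeneration times $\tau_1 < \tau_2 < \cdots$ at which the walk enjoys a genuine fresh start, and then to deduce (a)--(c) by applying classical limit theorems to the i.i.d.\ increments $(\tau_{k+1} - \tau_k,\, X_{\tau_{k+1}} - X_{\tau_k})$. The main technical work is to show that the tail $\P^\rho(\tau_1 > L)$ decays at a stretched-exponential rate of the form $c^{-1}\exp(-c \log^\gamma L)$, inherited from \eqref{e:LD}.

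\textbf{Construction of regenerations.} I would fix an auxiliary slope $v_\dagger \in (0, v_\star)$ and call $n$ a candidate regeneration time when (i) $X_m < X_n + v_\dagger(m-n)$ for all $m < n$ (backward cone), (ii) $X_m > X_n + v_\dagger(m-n)$ for all $m > n$ (forward cone), and (iii) no environment particle whose trajectory has intersected the space-time trace $\{(X_m,m) : m \leq n\}$ ever enters the forward cone $C_n^+ = \{(x,m) : m > n,\, x > X_n + v_\dagger(m-n)\}$. Condition (iii) ensures that, conditionally on the past $\cF_n$, the portion of the environment seen by the walk after time $n$ can be coupled with a freshly sampled Poisson system of random walks, so that the post-$\tau_1$ process has the same annealed law as the original walk translated to $(X_{\tau_1}, \tau_1)$ and is independent of the pre-$\tau_1$ history. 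Iterating the construction starting from $\tau_k$ yields $\tau_1 < \tau_2 < \cdots$ with i.i.d.\ increments for $k \geq 2$.

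\textbf{Tail estimates: the main obstacle.} The heart of the argument is controlling $\P^\rho(\tau_1 > L)$. Condition (ii) is essentially supplied by \eqref{e:LD}: the event that the walk dips below a line of slope $v_\dagger < v_\star$ issued from some $(X_n,n)$ has stretched-exponentially small probability in the depth of the dip, so by a pigeonhole argument many times $n$ in a long window satisfy (i) and (ii) simultaneously. The ellipticity assumptions enter in showing that at such candidate times a short sequence of forced rightward steps can, with positive probability, put the walk in a configuration from which the forward cone opens up; in particular $v_\bullet > 0$ provides a rightward bias on occupied sites, while $v_\circ > -1$ gives $p_\circ > 0$ so that rightward steps on vacant sites remain possible. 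The truly delicate point is (iii): bounding the probability that an environment particle which has previously interacted with the walk later enters $C_n^+$. Since such particles diffuse on the $\sqrt{t}$ scale while the cone recedes linearly at rate $v_\dagger > 0$, the crossing probability for a particle starting at distance $d$ from the boundary is exponentially small in $d^2/t$; combined with Poisson concentration on the number of previously visited particles and a union bound over candidate regeneration times within a window of length $L^{1-\delta}$, this should yield the desired stretched-exponential tail. I expect this decoupling step to be where the multiscale renormalisation announced in the abstract enters in earnest, as the naive union bound over all past particles seems insufficient and a coarse-graining of the space-time slab into boxes of geometrically increasing scale is likely required.

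\textbf{From regenerations to (a)--(c).} Once the i.i.d.\ structure with stretched-exponential tails is in place, the three assertions follow from standard arguments. Setting $v := \E^\rho[X_{\tau_2} - X_{\tau_1}]/\E^\rho[\tau_2 - \tau_1]$, the SLLN \eqref{e:SLLN} is Kolmogorov's strong law applied to the pairs $(\tau_{k+1} - \tau_k, X_{\tau_{k+1}} - X_{\tau_k})$, and the lower bound $v \geq v_\star$ follows because condition (ii) forces $X_{\tau_{k+1}} - X_{\tau_k} \geq v_\dagger(\tau_{k+1} - \tau_k)$ together with the fact that $v_\dagger$ can be taken arbitrarily close to $v_\star$. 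The FCLT \eqref{e:CLT} is Donsker's invariance principle for the centered displacements, the required finite second moments being supplied by the tail bound on $\tau_1$. Finally, the large-deviation bound \eqref{e:LD2} follows by combining Cram\'er-type concentration for i.i.d.\ sums with stretched-exponential tails with a further appeal to \eqref{e:LD} to handle the overshoot between consecutive regenerations and the supremum over $t \geq n$.
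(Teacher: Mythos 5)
Your overall strategy matches the paper's: build a regeneration structure whose increments are i.i.d., verify stretched-exponential tails for the regeneration time, and then read off the SLLN, FCLT and large-deviation bound from classical limit theorems. However, two of your details diverge from the paper in ways worth flagging.

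First, your regeneration condition (iii) is phrased in terms of environment particles that \emph{have intersected the walk's past trace}, which is a condition coupling the environment and the walk. The paper instead requires $\omega(W^{\treze}_{Y_{R_k}}) = 0$, i.e.\ that no environment trajectory in the Poisson process intersects \emph{both} the backward cone $\tres(Y_{R_k})$ and the forward cone $\um(Y_{R_k})$ --- a condition on the environment alone. This is what makes the regeneration clean: the trajectory space $W$ is partitioned into $W^{\um}_y$, $W^{\tres}_y$, $W^{\treze}_y$, and the restrictions of the Poisson process to these three pieces are jointly independent. On the regeneration event the $\treze$-piece is empty and the walk's future lies entirely in the forward cone, so it depends only on $W^{\um}_y$ and the uniforms in $\um(y)$, which are independent of everything revealed in the past. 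Your condition (iii) is both harder to verify and harder to make conditionally measurable, since which particles ``have interacted with the walk'' is itself a complicated function of the environment.

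Second, your expectation that the multiscale renormalisation enters in the tail estimate for $\tau_1$ is incorrect for this theorem. In the paper the renormalisation machinery lives entirely in the proof of Theorem~\ref{thm:LD} (Section~\ref{s:renorm}), and Theorem~\ref{thm:SLLN+CLT+LD} simply \emph{assumes} the ballisticity condition \eqref{e:LD} as input. The tail of the regeneration time (Theorem~\ref{t:tailregeneration}) is controlled by an entirely different argument: exponential tails for the \emph{influence field} $h(y)$ (Lemma~\ref{l:hxt_exp}), local versions of it, the cone-exit estimate (Lemma~\ref{l:no_top}) obtained from \eqref{e:LD}, and a ``good record time'' bootstrap (Proposition~\ref{p:manygrts}) giving a stretched-exponential bound without any coarse-graining of boxes. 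The naive union bound over past particles you are worried about is avoided precisely by the clean trajectory partition and the influence-field estimate, not by renormalisation. Your derivation of $v \ge v_\star$ from the cone slope $v_\dagger$ is also shaky because the paper's forward cone has slope $\bar v = v_\star/3$, and whether $v_\dagger$ can be pushed up to $v_\star$ while keeping the tail estimates is not obvious; a cleaner route is that $v < v_\star$ would, via the SLLN, contradict \eqref{e:LD} directly.
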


Our last theorem shows that \eqref{e:LD} holds when $v_\circ \le v_\star < v_\bullet$ and 
$\rho$ is large enough.

\begin{theorem}
\label{thm:LD}
If $v_\circ < v_\bullet$, then for all $v_\star \in [v_\circ,v_\bullet)$ there exist $\rho_\star
= \rho_\star(v_\circ, v_\bullet, v_\star) \in (0,\infty)$ and $c=c(v_\circ, v_\bullet,v_\star) \in
(0,\infty)$ such that \eqref{e:LD} holds with $\gamma = \tfrac32$ for all $\rho \geq \rho_\star$.
\end{theorem}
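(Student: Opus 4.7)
The plan is to prove the ballisticity condition \eqref{e:LD} via a multiscale renormalisation that exploits that, when $\rho$ is large, most space-time sites lie in $\mathcal{T}$ and $X$ behaves essentially as a walk with constant drift $v_\bullet > v_\star$. I would fix scales $L_{k+1} = \lfloor L_k^\alpha \rfloor$ for some $\alpha \in (1, 2^{2/3})$, and for each $k$ declare a space-time box $B_k$ of dimensions of order $L_k \times L_k$ to be \emph{good} if every nearest-neighbour trajectory compatible with \eqref{e:trpr} that enters the bottom of $B_k$ exits its top with horizontal displacement at least $v_\star L_k$. Writing $p_k$ for the probability that a canonical scale-$k$ box is bad, the goal is the recursive bound
\begin{equation*}
p_{k+1} \;\leq\; C\, L_{k+1}^{C}\, p_k^{\,2} \;+\; \mathrm{err}_k,
\end{equation*}
which, once seeded with sufficiently small $p_0$ and with $\mathrm{err}_k$ decaying fast enough, iterates to $p_k \leq \exp(-c\,\log^{3/2} L_k)$; a union bound over space-time translates then upgrades this to \eqref{e:LD} with $\gamma = \tfrac{3}{2}$.

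For the base estimate I would use a direct high-density computation. Since $P^\rho((x,n)\notin \mathcal{T}) = e^{-\rho}$ by the Poisson structure, for any $\varepsilon > 0$ one can, by choosing $\rho$ large enough, make the probability that a trajectory in $B_0$ visits more than an $\varepsilon$-fraction of vacant sites exponentially small in $L_0$, via a Chernoff or Azuma-type concentration applied along the walk path. Coupling $X$ with an auxiliary walk of deterministic drift $v_\bullet$ (sharing coin-flip randomness and differing from $X$ only at vacant visits) shows that the displacement discrepancy after $L_0$ steps is at most $|v_\bullet-v_\circ|$ times the number of vacant visits. Choosing $\varepsilon$ with $\varepsilon(v_\bullet - v_\circ) < v_\bullet - v_\star$ yields $p_0 \leq e^{-c L_0}$, which is well below $\exp(-c_0 \log^{3/2} L_0)$ for the base scale.

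For the inductive step, I would call a scale-$(k+1)$ box bad only when it contains a \emph{cascade} of two scale-$k$ bad sub-boxes in specified relative positions. A geometric argument shows that a single bad scale-$k$ sub-box cannot by itself destroy the scale-$(k+1)$ crossing property: as long as $v_\star < v_\bullet$, the drift deficit produced by one bad layer is repaid in the subsequent scale-$k$ layers with high probability. However, the two sub-boxes are not independent, because \eqref{e:cov} shows that $\mathcal{T}$ has only $n^{-1/2}$ covariance decay, so classical mixing-based decoupling is unavailable. I would instead decouple at the particle level: condition on the positions of all environment particles at the common base time of the two sub-boxes, then use diffusive spread of simple random walks to show that particles starting further than $C\sqrt{L_k}\log L_k$ from a sub-box reach it with stretched-exponentially small probability. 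This produces $\mathrm{err}_k$ of order $\exp(-cL_k^{1/2}\log L_k)$, which is negligible against $p_k^{\,2}$.

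The decoupling step is the main obstacle and also what dictates the exponent $\gamma = \tfrac{3}{2}$: balancing the target $\log p_k \asymp -\log^\gamma L_k$ against the recursion $p_{k+1}\asymp p_k^2 L_{k+1}^C$ with $L_{k+1}=L_k^\alpha$ forces $\alpha^\gamma \leq 2$, and the requirement that the decoupling error $\mathrm{err}_k$ be summable through the iteration constrains $\alpha$ from below, leaving $\alpha = 2^{2/3}$ and $\gamma = \tfrac{3}{2}$ as the natural boundary. Once the induction is set up consistently and all constants are tuned, running it yields the claimed bound on $p_k$, and translating box crossings into a bound on $\P^\rho(\exists\,n\colon X_n < nv_\star - L)$ via a union bound over space-time starting points completes the proof.
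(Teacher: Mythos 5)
Your high-level skeleton matches the paper's: a multiscale renormalisation with scales $L_{k+1}\approx L_k^{\alpha}$, bad-box events, a recursion of the form $p_{k+1}\lesssim L^{C}p_k^2+\mathrm{err}_k$, iteration to a stretched-exponential bound, and a union bound to pass to \eqref{e:LD}. The exponent bookkeeping for $\gamma=\tfrac32$ is also broadly in the right spirit, although the claim that the error term constrains $\alpha$ from below and "leaves $\alpha=2^{2/3}$" is not accurate: in the paper $\alpha=3/2$ (from $L_{k+1}=\lfloor L_k^{1/2}\rfloor L_k$) and the error $L_k e^{-cL_k^{1/8}}$ is so small that it places no real constraint; the only binding condition is $\alpha^\gamma<2$.

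The serious gap is the decoupling step. Conditioning on the particle positions at the common base time of the two sub-boxes does not make the two events independent, and it does not make the environment seen by the upper box Poisson: the conditional law of that configuration given what happened in the lower box is unknown, and since $f_1,f_2$ are non-increasing the FKG inequality runs in the \emph{wrong} direction, so you must produce an \emph{upper} bound for $\mathbb{E}[f_1f_2]-\mathbb{E}[f_1]\mathbb{E}[f_2]$ by stochastic domination. Saying that far-away particles do not diffuse into the box does not supply this domination; it controls which particles matter, not their conditional law. What the paper actually proves (Theorem~\ref{t:decouple} via soft local times, Lemma~\ref{l:couple}, Corollary~\ref{c:couplesystem}) is that the configuration observed at the intermediate time slice can, with stretched-exponentially small failure probability, be coupled to dominate a \emph{fresh} Poisson process of slightly \emph{smaller} intensity $\rho$, and the price is a sprinkling $\rho\to\rho(1+n^{-1/16})$. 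Your outline never introduces the increasing density sequence $(\rho_k)$ that pays for this, nor the decreasing velocity sequence $(v_k)\downarrow v$ that pays for the slack lost to the (at most two) bad sub-layers; both are essential bookkeeping devices without which the induction \eqref{e:recursion}--\eqref{e:recstep} does not close. Your intuition that "one bad layer is repaid" is exactly what the velocity perturbation formalises, but it must be budgeted explicitly, scale by scale.

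A smaller issue is the base estimate. You want a Chernoff or Azuma bound on the fraction of vacant sites visited by a trajectory in $B_0$, but the vacancy indicators $(\mathbf{1}_{(x,n)\notin\mathcal T})$ are neither independent nor a martingale difference sequence (they are positively correlated through shared environment trajectories, cf.\ \eqref{e:cov}), and the walk's path depends on $\mathcal T$, so the concentration you invoke needs a genuine argument. The paper sidesteps this entirely: it conditions on the much stronger event $C_{k}$ that \emph{every} site in the box is occupied, bounds $\mathbb{P}^{\rho}(C_{k}^c)$ by a union bound of size $O(L_k^2 e^{-\rho})$, and only then takes $\rho$ large. It also triggers the recursion at a scale $k_2(\delta)$ chosen \emph{before} $\rho$ (because the recursion only kicks in from $k_0(\delta)$ onwards), a point worth keeping in mind when you arrange your quantifiers.
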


\begin{remark}
\label{r:v_leq_v}
If $v_\circ \wedge v_\bullet > 0$, then \eqref{e:LD} holds for all $\rho \in (0,\infty)$ and $v_\star
\in (0,v_\circ \wedge v_\bullet)$ by comparison with a homogeneous random walk with drift
$v_\circ \wedge v_\bullet$. In fact, in this case the bound in the right-hand side of \eqref{e:LD}
can be made exponentially small in $L$.
\end{remark}

Theorem~\ref{thm:main} now follows directly from Theorems~\ref{thm:SLLN+CLT+LD}--\ref{thm:LD} 
and Remark~\ref{r:v_leq_v} by noting that, when $v_\bullet \neq 0$, by reflection symmetry we may 
without loss of generality assume that $v_\bullet >0$. Note that Theorem~\ref{thm:LD} is only needed 
in the nestling case.

The proofs of Theorems~\ref{thm:SLLN+CLT+LD} and \ref{thm:LD} are given in 
Sections~\ref{s:regeneration} and \ref{s:renorm}, respectively. They rely on the construction 
and control of a \emph{renewal structure} for the random walk trajectory, respectively, on a 
\emph{multiscale renormalisation scheme}. The latter is used to show that the random walk 
stays to the right of a point that moves at a strictly positive speed. The former is used to show 
that, as a consequence of this ballistic behaviour, the random walk has a tendency to
outrun the environment particles, which only move diffusively, and to enter ``fresh territory'' 
containing particles it has never encountered before. Therefore the random walk trajectory 
is a concatenation of ``large independent random pieces'', and this forms the basis on which 
the limit laws in Theorem~\ref{thm:main} can be deduced (after appropriate tail estimates). 
None of these techniques are new in the field, but in the context of slow mixing dynamic 
random environments they are novel and open up gates to future advances.

\medskip\noindent 
{\bf Comments.}

\noindent 
{\bf 1.} It follows from Theorem~\ref{thm:LD} (and reflection symmetry in the case $v_\bullet < v_\circ$) 
that
\begin{equation}
\label{e:limit_v}
\lim_{\rho \to \infty} v(v_\circ, v_\bullet, \rho) = v_\bullet,
\end{equation}
where $v = v(v_\circ, v_\bullet, \rho)$ is as in \eqref{e:SLLN}.  This can also be deduced 
from the following asymptotic weak law of large numbers derived in \cite{dHoKeSi}:
\begin{equation}
\lim_{\rho\to\infty} \limsup_{n\to\infty} \P^\rho\big(|n^{-1}X_n-v_{\bullet}|>\varepsilon\big)
= 0 \qquad \forall\,\varepsilon>0.
\end{equation}
In fact, \cite{dHoKeSi} considers the version of our model in $\Z^d, d\ge1$, in continuous
time and with more general transition kernels. 

\medskip\noindent
{\bf 2.}
It can be shown that the asymptotic speed and variance $v$ and $\sigma$ in Theorem~\ref{thm:main}
are continuous functions of the parameters $v_\circ$, $v_\bullet$ and $\rho$. (See Remark~\ref{r:continuity} 
in Section~\ref{ss:limittheorems}.) 

\medskip\noindent
{\bf 3.} 
We expect Theorem~\ref{thm:main} to hold when $v_\circ \neq 0$, $v_\bullet \neq - \mathrm{sign}
(v_\circ)$ and $\rho$ is small. In the non-nestling case, this already follows (for any $\rho \ge 0$) from 
Theorem~\ref{thm:SLLN+CLT+LD}, but in the nestling case we would have to prove the analogue of 
Theorem~\ref{thm:LD} for $v_\bullet < v_\circ$ and $\rho$ small.

\medskip\noindent
{\bf 4.}
Our techniques can potentially be extended to higher dimensions. The restriction to the one-dimensional 
setting simplifies the notation and allows us to avoid certain technicalities.

\medskip\noindent
{\bf 5.}
Our dynamic random environment is composed of lazy random walks evolving in discrete time. This 
assumption was made for convenience in order to simplify some technical steps. However, as discussed 
in Remark~\ref{r:lazy2}, our analysis can be extended to symmetric random walks with bounded steps 
that are aperiodic or bipartite (in the sense of  \cite{LaLi}), or that evolve in continuous time.

\medskip\noindent
{\bf 6.}
It is a challenge to extend Theorem \ref{thm:main} to other environments, in particular to environments 
where the particles are allowed to interact with each other. The renormalisation scheme is robust enough 
to show that the ballisticity condition \eqref{e:LD} holds as long as the environment satisfies a mild 
decoupling inequality (see Section~\ref{ss:commentsreg} for specific examples). On the other hand, 
the regeneration structure is more delicate, and uses model-specific features in an important way. 
A recent development can be found in \cite{HS}, where the authors consider as dynamic environment 
the simple symmetric exclusion process in the regimes of fast or slow jumps. While their proofs differ 
significantly from ours, their methods are in spirit close. It remains an interesting question to determine 
how far both methods can be pushed (see also Section~\ref{ss:regextensions}).

\medskip\noindent
{\bf 7.}
After the preprint version of the present article appeared, it was brought to our attention that a 
regeneration argument similar to the one used to prove Theorem~\ref{thm:SLLN+CLT+LD} 
appears in \cite{BR}, where the authors consider a different model in the same random 
environment (in continuous time). We believe, however, that our approach is simpler and is 
better suited to the ballisticity condition \eqref{e:LD} (which is mildly stronger than the one 
used in \cite{BR}).

\medskip\noindent 
{\bf Organization of the paper.} 
In Section~\ref{s:preliminaries} we give a graphical construction of our random walk in
dynamic random environment. This construction will be convenient during the proofs of
our main results. In Section~\ref{s:renorm} we set up a renormalisation scheme, and
use this to show that, for large densities of the particles, the random walk moves with a
positive lower speed to the right. This lower speed of the random walk plays the role of
a ballisticity condition and is crucial in Section~\ref{s:regeneration}, where we introduce
a random sequence of regeneration times at which the random walk ``refreshes its outlook
on the random environment'', and show that these regeneration times have a good tail. 
In Section~\ref{ss:limittheorems} the regeneration times are used to prove
Theorem~\ref{thm:SLLN+CLT+LD}. Appendices~\ref{s:simwithPoisson}--\ref{s:conv_rate}
collect a few technical facts that are needed along the way.

\medskip\noindent 
{\bf Acknowledgments.} 
VS thanks O.\ Angel, V.\ Beffara and G.\ Kozma for fruitful discussions at the initial stages 
of the work, and Y.\ Peres and O.\ Zeitouni for discussions on regeneration. MH was supported 
by the Dutch mathematics cluster STAR during an extended visit to the Centre for Mathematics 
and Computer Science in Amsterdam in the Fall of 2010, by CNPq grants 248718/2013-4 and by ERC AG "COMPASP". 
FdH is supported by ERC Advanced Grant 267356 VARIS, 
RdS by the French ANR project MEMEMO2 10-BLAN-0125-03 and the German DFG project KO 2205/13-1, 
VS by Brazilian CNPq grants 308787/2011-0 and 476756/2012-0 and FAPERJ grant E-26/102.878/2012-BBP, 
and AT by CNPq grants 306348/2012-8 and 478577/2012-5. MH thanks Microsoft Research and IMPA
for hospitality and financial support. 
RdS thanks IMPA for hospitality and financial support.
VS thanks Microsoft Research, UC Berkeley, MSRI, FIM ETH-Zurich 
and ENS-Paris for hospitality and financial support. 
This work was also supported by the ESF RGLIS Excellence Network.


\section{Preliminaries}
\label{s:preliminaries}

In this section we give a particular construction of our model, supporting a Poisson point process 
on the space of two-sided trajectories of environment particles (Section~\ref{ss:DRE}) and an i.i.d.\ 
sequence of $\Uniform([0,1])$ random variables that are used to define our random walk
(Section~\ref{ss:RWDRE}). This formulation is equivalent to that given in Section~\ref{s:intro},
but has the advantage of providing independence and monotonicity properties that are useful
throughout the paper (see Definitions~\ref{d:monotone}--\ref{d:support} and
Remark~\ref{r:monotone} below).

Throughout the sequel, $c$ denotes a positive constant that may depend on $v_\circ, v_\bullet$
and may change each time it appears. Further dependence will be made explicit: for example,
$c(\eta)$ is a constant that depends on $\eta$ and possibly on $v_\circ, v_\bullet$. Numbered
constants $c_0, c_1, \dots$ refer to their first appearance in the text and also depend only on 
$v_\circ$ and $v_\bullet$ unless otherwise indicated.


\subsection{Dynamic random environment}
\label{ss:DRE}

Let
\begin{equation}
\label{e:defenvpart}
S = (S^{z,i})_{i \in \N, z \in \Z} \;\; \text{ with } \;\; S^{z,i} = (S^{z,i}_n)_{n\in \Z}
\end{equation}
be a doubly-indexed collection of independent lazy simple random walks such that $S^{z,i}_0 = z$
for all $i \in \N$. By this we mean that the past $(S_n^{z,i})_{n \in \mathbb{Z}_-}$ and the future 
$(S_n^{z,i})_{n \in \mathbb{Z}_+}$ are independent and distributed as symmetric lazy simple random 
walks as described in Section~\ref{s:intro}.

Let $(N(z,0))_{z \in \Z}$ be a sequence of i.i.d.\ random variables independent of $S$. Then, the
process $N(\cdot,n)$ defined by
\begin{equation}
\label{e:defIPS}
N(x,n) = \sum_{z \in \N} \, \sum_{1 \leq i \leq N(z,0)} \mathbbm{1}_{\{S^{z,i}_n=x\}},
\qquad (x, n) \in \Z^2,
\end{equation}
(with $0$ assigned to empty sums) is a translation-invariant Markov process representing the number
of environment particles at site $x$ and time $n$. For any density $\rho >0$, the process $N$ is in equilibrium
when we choose the distribution of $N(\cdot,0)$ to be product Poisson($\rho$). Denote by $\P^\rho$
the joint law of $N(\cdot,0)$ and $S$ in this case.

It will be useful to view $N$ as a subprocess of a Poisson point process on a space of trajectories as
follows.
Let
\begin{equation}
\label{e:W}
W = \big\{ w=(w(n))_{n \in \Z}\colon\, w(n) \in \Z,\,|w(n+1) - w({n})| \leq 1\,\,\forall\,n \in \Z \big\},
\end{equation}
denote the set of two-sided nearest-neighbour trajectories on $\Z$. Endow $W$ with the sigma-algebra
$\cW$ generated by the canonical projections $Z_n(w) = w(n)$, $n \in \Z$. A partition of $W$ into disjoint
measurable sets is given by $\{W_x\}_{x \in \Z}$, where $W_x = \{w \in W\colon\, w(0) = x\}$.

We introduce the space $\bar{\Omega}$ of point measures on $W$ as (be careful to distinguish
$\omega$ from $w$)
\begin{equation}
\label{e:Omegabar}
\bar{\Omega} = \Big\{ \omega = \sum_{j \in \Z_+} \delta_{w_j}\colon\, w_j \in W
\,\,\forall\, j \in \Z_+, \, |\omega(W_x)| < \infty \,\,\forall\, x \in \Z \Big\},
\end{equation}
and define a random point measure $\omega \in \bar{\Omega}$ by the expression
\begin{equation}
\label{e:randomPPP}
\omega = \sum_{z \in \Z} \, \sum_{1 \leq i \leq N(z,0)} \delta_{S^{z,i}}.
\end{equation}
It is then straightforward to check that, under $\P^\rho$, $\omega$ is a Poisson point
process on $W$ with intensity measure $\rho \mu$, where
\begin{equation}
\label{e:mu}
\mu = \sum_{x \in \Z} P_x
\end{equation}
and $P_x$ is the law on $W$, with support on $W_x$, under which $Z(\cdot) =
(Z_n(\cdot))_{n \in \Z}$ is distributed as a lazy simple random walk on $\Z$.
Moreover, we have that
\begin{equation}
\label{e:relNomega}
N(x,n) = \omega(\{w \in W \colon w(n) = x\}), \qquad (x,n) \in \Z^2.
\end{equation}

For $w \in W$, let $\Trace(w) =\{(w(n),n)\}_{n \in \Z} \subset \Z^2$ be the trace of $w$,
and define the total trace of the environment trajectories as the set
\begin{equation}
\label{e:T_rho}
\cT = \cT(\omega) = \bigcup_{z \in \Z} \bigcup_{1 \le i \le N(z,0)} \Trace(S^{z,i}).
\end{equation}

We can now justify \eqref{e:cov}. Noting that $\{(x,k) \notin \cT\} = \{\omega(w(k)=x)=0\}$, compute 
\begin{equation}\label{e:justcov1}
\begin{aligned}
\P^\rho \big[ (0,0) \notin \cT, (0,n) \notin \cT \big] & = \P^{\rho} \left[ \omega\big(w(0)=0 \text{ or } w(n)=0 \big)=0\right] \\
& = \exp \left\{- \rho \mu\big(w(0)=0 \text{ or } w(n) = 0 \big) \right\}.
\end{aligned}
\end{equation}
Now,
\begin{equation}\label{e:justcov2}
\begin{aligned}
\mu\big(w(0)=0  \text{ or } w(n) = 0\big) & = \mu\big(w(0)=0\big) + \mu\big(w(n)=0\neq w(0) \big) \\
& = 1+P_0(Z_n\neq0) = 2 - P_0(Z_n=0),
\end{aligned}
\end{equation}
where we used the symmetry of $Z$.
Hence,
\begin{align}\label{e:justcov3}
 \Cov_\rho \left( \mathbbm{1}_{\{(0,0) \in \cT\}}, \mathbbm{1}_{\{ (0,n) \in \cT \}} \right)
& =\Cov_\rho \left( \mathbbm{1}_{\{(0,0) \notin \cT\}}, \mathbbm{1}_{\{ (0,n) \notin \cT \}}\right) \nonumber\\
& = e^{-2\rho} \left( e^{\rho P_0(Z_n=0)} - 1\right) \sim c(\rho) n^{-\frac12}
\end{align}
since $P_0(Z_n=0) \sim c n^{-\frac12}$.


\subsection{Random walk in dynamic random environment}
\label{ss:RWDRE}

In order to define the random walk on our dynamic random environment, we first
enlarge the probability space. To that end, let us consider a collection of i.i.d.\
random variables $U = (U_y)_{y \in \mathbb{Z}^2}$, independent of the previous
objects, with each $U_y$ uniformly distributed on $[0,1]$. Set $\Omega = \bar{\Omega} 
\times [0,1]^{\Z^2}$, and redefine $\P^\rho$ to be the probability measure giving the 
joint law of $N(\cdot,0)$, $S$ and $U$.

Given a realisation of $\omega$ and $U$ and $y \in \Z^2$, define the random
variables $Y^{y}_n$, $n \in {\mathbb{Z}_+}$, as follows:
\begin{equation}
\label{e:y}
\begin{split}
Y^{y}_0 & = y,\\
Y^{y}_{n+1} & =
\begin{cases}
Y^{y}_n + (1,1), \quad & \text{if } Y^{y}_n \in \cT(\omega)
\text{ and }\,U_{Y^{y}_n} \leq p_\bullet\\
&\text{or } Y^{y}_n \not \in\cT(\omega) \text{ and }\, U_{Y^{y}_n} \leq p_\circ,\\
Y^{y}_n + (-1,1), & \text{otherwise.}
\end{cases}
\end{split}
\end{equation}
In words, $Y^{y} = (Y^{y}_n)_{n\in{\mathbb{Z}_+}}$ is the space-time process on $\Z^2$
that starts at $y$, always moves upwards, and is such that its horizontal projection $X^{y}
= (X^{y}_n)_{n\in{\mathbb{Z}_+}}$ is a random walk with drift $v_{\bullet}=2p_\bullet-1$
when $Y^{y}$ steps on $\cT(\omega)$ and drift $v_{\circ}=2p_\circ-1$ otherwise. Note that
$Y^{y}$ depends on $\cT(\omega)$, but this will be suppressed from the notation. 
Also note that, for any $y \in \Z^2$, the law of $X^y$ under $\mathbb{P}^{\rho}$ coincides with the annealed law described
in Section~\ref{s:intro}. 
So from now on $X=X^0$ will be the random walk in dynamic
random environment that we will consider. 
We may also write $Y_n$ to denote $Y^{0}_n$.

\begin{definition}
\label{d:monotone}
For $\omega,\omega' \in \bar{\Omega}$, we say that $\omega \leq \omega'$ when $\mathcal{T}
(\omega) \subset \mathcal{T}(\omega')$. We say that a random variable $f\colon\,\Omega
\to \mathbb{R}$ is non-increasing when $f(\omega', \xi) \leq f(\omega, \xi)$ for all $\omega
\leq \omega'$ and all $\xi \in [0,1]^{\Z^2}$.
We extend this definition to events $A$ in $\Omega$ by considering $f=\mathbf{1}_A$. 
Standard coupling arguments imply that
$\mathbb{E}^{\rho'}(f) \leq \mathbb{E}^{\rho}(f)$ for all non-increasing random variables
$f$ and all $\rho \leq \rho'$.
\end{definition}

\begin{definition}
\label{d:support}
We say that a random variable $f\colon\, \Omega \to \mathbb{R}$ has support in $B
\subset \mathbb{Z}^2$ when $f(\omega,\xi) = f(\omega',\xi')$ for all $\omega',\omega
\in\bar{\Omega}$ with $\mathcal{T}(\omega) \cap B = \mathcal{T}(\omega') \cap B$
and all $\xi, \xi' \in [0,1]^{\Z^2}$ with $\xi(v) = \xi'(v)$ for all $v \in B$.
\end{definition}

\begin{remark}
\label{r:monotone}
The above construction provides two forms of monotonicity:\\
(i) Initial position: If $x \leq x'$ have the same parity (i.e., $x' - x \in 2 \mathbb{Z}$),
then
\begin{equation}
X^{(x,n)}_i \leq X^{(x',n)}_i \qquad \forall\, n \in \mathbb{Z}\,\,\forall\, i \in \mathbb{Z}_+.
\end{equation}
(ii) Environment: If $v_\circ \leq v_\bullet$ and $\omega \leq \omega'$, then
\begin{equation}
X^{y}_i(\omega) \leq X^y_i(\omega') \qquad \forall\, y \in \Z^2 \,\,\forall\,i \in \mathbb{Z}_+.
\end{equation}
\end{remark}


\section{Proof of Theorem~\ref{thm:LD}: Renormalisation}
\label{s:renorm}

In this section we prove Theorem~\ref{thm:LD}, which shows the validity of the ballisticity
condition in \eqref{e:LD} when $v_\circ < v_\bullet$ and $\rho$ is large enough. 
This will be crucial for the proof of Theorem~\ref{thm:SLLN+CLT+LD} later.

In Section~\ref{ss:notation} we introduce the required notation. In Section~\ref{ss:pkest} we
devise a \emph{renormalisation scheme} (Lemmas~\ref{l:recursion}--\ref{l:induction}) to
show that under a ``finite-size criterion'' the random walk moves ballistically, and we prove
that for large enough $\rho$ this criterion holds (Lemma~\ref{l:trigger}). In Section~\ref{ss:ldest}
we show that the renormalisation scheme yields the large deviation bound in
Theorem~\ref{thm:LD} (Lemma~\ref{l:hit_line}). This bound will be needed in
Section~\ref{s:regeneration}, where we show that, as the random walk explores fresh parts
of the dynamic random environment, it builds up a \emph{regeneration structure} that serves
as a ``skeleton'' for the proof of Theorem~\ref{thm:SLLN+CLT+LD}.
In Section~\ref{ss:commentsreg} we comment on possible extensions.


\subsection{Space-time decoupling}
\label{ss:space_time}

In order to implement our renormalisation scheme, we need to control the dependence of
events having support in two boxes that are well separated in space-time. This is the content
of the following corollary of Theorem~\ref{t:decouple}, the proof of which is deferred to
Appendix~\ref{s:decouple}.

\begin{corollary}
\label{c:decouple}
Let $B_1 = ([a, b] \times [n, m]) \cap \Z^2$ and $B_2 = ([a', b'] \times [-n', 0]) \cap \Z^2$ be two
space-time boxes (observe that their time separation is $n$) and assume that $n \geq c$.
Recall Definitions~\ref{d:support} and \ref{d:monotone}, and assume that $f_1\colon\,\Omega
\to [0,1]$ and $f_2\colon\,\Omega \to [0,1]$ are non-increasing random variables with support
in $B_1$ and $B_2$, respectively. Then, for any $\rho \geq 1$,
\begin{equation}
\label{e:dec2}
\mathbb{E}^{\rho(1+n^{-1/16})}[f_1 f_2] \leq \mathbb{E}^{\rho(1+n^{-1/16})}[f_1] \,\,
\mathbb{E}^{\rho}[f_2] + c \big(\textnormal{per}(B_1) + n\big)\,e^{-c n^{1/8}},
\end{equation}
where $\textnormal{per}(B_1)$ stands for the perimeter of $B_1$.
\end{corollary}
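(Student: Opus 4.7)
The plan is to obtain Corollary~\ref{c:decouple} as a direct specialization of Theorem~\ref{t:decouple} (whose proof is deferred to Appendix~\ref{s:decouple}). Theorem~\ref{t:decouple} is expected to be a general \emph{sprinkled decoupling inequality} for the Poisson trajectory field $\omega$ introduced in Section~\ref{ss:DRE}: it says that if $f_1, f_2$ are non-increasing functionals of $(\omega, U)$ with supports in two space-time regions separated by a time gap $n$, then the correlation between $f_1$ and $f_2$ can be broken at the cost of (i) slightly increasing the intensity of the Poisson field on one side (sprinkling) and (ii) paying a stretched-exponential error in $n$.

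First I would verify the hypotheses: $f_1, f_2$ are non-increasing in the sense of Definition~\ref{d:monotone}, and their supports $B_1, B_2$ in the sense of Definition~\ref{d:support} are separated in time by exactly $n$. The assumption $n \geq c$ ensures that the sprinkling parameter $\varepsilon = n^{-1/16}$ is small enough that the abstract theorem applies; the assumption $\rho \geq 1$ provides a baseline density so that constants in the theorem become uniform. With these ingredients in place, Theorem~\ref{t:decouple} delivers
\begin{equation*}
\mathbb{E}^{\rho(1+\varepsilon)}[f_1 f_2] \;\leq\; \mathbb{E}^{\rho(1+\varepsilon)}[f_1]\,\mathbb{E}^{\rho}[f_2] + \text{error}(B_1, B_2, n),
\end{equation*}
and the remaining task is to identify the error with $c(\textnormal{per}(B_1)+n)\,e^{-cn^{1/8}}$.

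To see where the error comes from, heuristically the only trajectories that create dependence between $f_1$ and $f_2$ are environment particles whose two-sided trajectory visits $B_2$ and later, after a time at least $n$, approaches $B_1$. A lazy simple random walk displaces by more than $n^{1/2+\alpha}$ in time $n$ with probability at most $\exp(-c\,n^{2\alpha})$. Choosing $\alpha = 1/16$ matches the displacement of bad particles with the sprinkling amount $\varepsilon = n^{-1/16}$, so that an added intensity of $\rho\varepsilon$ is enough to stochastically dominate the (rare) problematic trajectories via a coupling on the Poisson field. The factor $\textnormal{per}(B_1)+n$ accounts for the number of potential ``entry points'' into $B_1$ through its temporal and spatial boundary, each contributing the $e^{-cn^{1/8}}$ tail.

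The main obstacle is located in the proof of Theorem~\ref{t:decouple} itself, namely designing the sprinkling-and-coupling construction on trajectory space so that the non-increasing property is preserved; at the level of the corollary, the work is only to match parameters and geometry. A minor technical issue is that the perimeter term must be allowed to depend on $B_1$ alone, which is natural because one sprinkles near the support of $f_1$: the size of $B_2$ then drops out of the error, which is essential for applications where $B_2$ represents the (large) past of the environment.
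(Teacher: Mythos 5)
Your overall approach is right, and it is what the paper intends: the corollary is nothing but a specialization of Theorem~\ref{t:decouple}, obtained by replacing the lower half-plane $D=\Z\times\Z_-$ by the smaller set $B_2\subset D$ (a random variable with support in $B_2$ automatically has support in $D$), translating in time so that the future box sits at heights $\ge n$, and using $\rho\ge 1$ to absorb the factor $\rho$ in the exponent $e^{-c\rho n^{1/8}}$ of \eqref{e:dec} into the constant $c$ of \eqref{e:dec2}. The paper states no other proof, so ``match parameters and geometry'' is exactly the content.

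Two of your explanatory remarks, however, run opposite to the actual mechanism, and one parameter-matching subtlety is worth flagging. First, the perimeter of $B_1$ does \emph{not} enter because ``one sprinkles near the support of $f_1$.'' In Theorem~\ref{t:decouple} the density is raised for the \emph{past} function (supported in $D$), while the perimeter that appears is that of $B$, the box of the \emph{un}-sprinkled future function. The reason is that the proof conditions on the particle counts along the horizontal segment $H=\mathcal{C}(B)\cap(\Z\times\{0\})$ obtained by intersecting the light cone of $B$ with the time-zero line (Lemma~\ref{l:markov_cone}), and the number of $L$-paving windows needed to cover $H$, as well as $|H'|$, scale like $\mathrm{per}(B)+n$. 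It is the geometry of the future box, via its backward cone, that controls the error. Second, the coupling is not a ``rare-trajectory domination'': what is actually shown is that the conditional law of the environment on $H$ can be coupled (via the Popov--Teixeira soft-local-time construction of Appendix~\ref{s:simwithPoisson} and Lemma~\ref{l:couple}) to an i.i.d.\ Poisson field of slightly smaller intensity $\rho$, as long as the $L$-paving of $H$ (with $L=\lfloor n^{1/4}\rfloor$) is $\rho_{1/2}$-dense, which happens with probability $1-e^{-c\rho n^{1/8}}$ by a Poisson moderate-deviation estimate. The exponents $1/16$, $1/4$, $1/8$ come from that bookkeeping, not from escape probabilities of individual walks (although, as you observed, $2\cdot\tfrac{1}{16}=\tfrac18$ is a nice mnemonic).

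Finally, there is a mismatch between the literal statement of the corollary and what Theorem~\ref{t:decouple} gives, which you did not remark on. Specializing the theorem with (theorem's $f_1$) $=$ (corollary's $f_2$, supported in $B_2\subset D$) and (theorem's $f_2$) $=$ (corollary's $f_1$, supported in $B_1$) yields
\begin{equation*}
\mathbb{E}^{\rho(1+n^{-1/16})}[f_1 f_2] \;\le\; \mathbb{E}^{\rho}[f_1]\,\mathbb{E}^{\rho(1+n^{-1/16})}[f_2] + c\,(\mathrm{per}(B_1)+n)\,e^{-c n^{1/8}},
\end{equation*}
i.e.\ the sprinkled density sits with the \emph{past} function $f_2$, not with $f_1$ as displayed in \eqref{e:dec2}. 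The two forms cannot be compared directly by monotonicity (raising the density decreases one factor and increases the other). This looks like a slip in which superscript goes where; it is harmless in the renormalisation argument because in \eqref{e:use_decouple} both factors are then bounded by $p_k$ using that the events $A_k(m)$ are non-increasing, but if you want to state the corollary exactly as in \eqref{e:dec2} you should either swap the roles of $f_1,f_2$ in its hypotheses, or note that the version you can actually derive suffices for the application.
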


The decoupling in Corollary~\ref{c:decouple}, together with the monotonicity stated in
Definition~\ref{d:monotone}, are the only assumptions on our dynamic random environment
that are used in the proof of Theorem~\ref{thm:LD}. Hence, the results in this section can in
principle be extended to different dynamic random environments. (See Section~\ref{ss:commentsreg}
for more details.)

\subsection{Scale notation}
\label{ss:notation}

\newconstant{c:Lk_growth}
Define recursively a sequence of scales $(L_k)_{k\in{\mathbb{Z}_+}}$ by putting
\begin{equation}
\label{e:L_k+1}
L_0 = 100, \qquad L_{k+1} = \lfloor L_k^{1/2} \rfloor L_k.
\end{equation}
(The choice $L_0=100$ has no special importance: any integer $\geq 4$ will do, as long as
it stays fixed.) Note that the above sequence grows super-exponentially fast:  $\log L_k \sim
(3/2)^k \log L_0$ as $k\to\infty$. For $L\in\N$, let $B_L$ be the space-time rectangle
\begin{equation}
\label{e:box}
B_L = \big([-L, 2L] \times [0,L]\big) \cap \Z^2
\end{equation}
and $I_L$ its middle bottom line
\begin{equation}
\label{e:I_L}
I_L = [0,L]\times \{0\} \subset B_L
\end{equation}
(see Fig.~\ref{fig-rect}). For $m = (r,s) \in \Z^2$, let
\begin{equation}
\label{e:box_trans}
B_L (m) = B_L(r,s) = (r,s)L + B_L, \qquad I_L(m) = I_L(r,s) = (r,s)L + I_L.
\end{equation}
For $k \in \N$, let
\begin{equation}
\label{e:M_K}
M_k = \big\{(r,s) \in \Z^2\colon\, B_{L_k} (r,s) \cap B_{L_{k+1}} \neq \emptyset\big\}
\end{equation}
denote the set of all indices whose corresponding shift of the rectangle $B_{L_k}$ still
intersects the larger rectangle $B_{L_{k+1}} = B_{L_{k + 1}}(0,0)$.

\begin{figure}[htbp]
\vspace{-1.7cm}
\begin{center}
\setlength{\unitlength}{0.5cm}
\begin{picture}(12,10)(0,-2)
{\thicklines
\qbezier(0,0)(7.5,0)(15,0)
\qbezier(0,4)(7.5,4)(15,4)
\qbezier(0,0)(0,2)(0,4)
\qbezier(15,0)(15,2)(15,4)
\qbezier(5,.05)(7.5,.05)(10,.05)
\qbezier(5,-.05)(7.5,-.05)(10,-.05)
}
\put(-.2,-1.5){$-L$}
\put(4.8,-1.5){$0$}
\put(9.8,-1.5){$L$}
\put(14.6,-1.5){$2L$}
\put(-1,0){$0$}
\put(-1,4){$L$}
\put(5,0){\circle*{.35}}
\put(10,0){\circle*{.35}}
\end{picture}
\end{center}
\caption{\small Picture of $B_L$ (rectangle) and $I_L$ (middle bottom line).}
\label{fig-rect}
\end{figure}
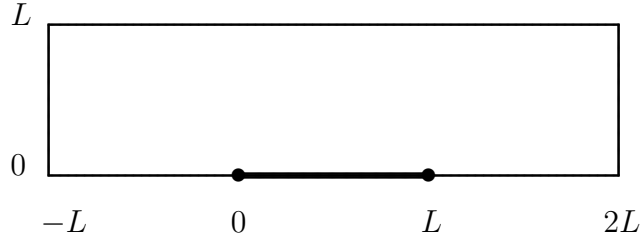

Fix $v <  v_{\bullet}$, let $\delta = \tfrac12(v_\bullet-v)$, and define recursively a sequence
$(v_k)_{k\in\N}$ of velocities by putting
\begin{equation}
\label{e:seq_vel}
v_1 = v_{\bullet} - \delta, \qquad v_{k+1} = v_k - \delta \left(\frac{6}{\pi^2} \right) \frac{1}{k^2} .
\end{equation}
Since $\sum_{k\in\N} 1/k^2 = \pi^2/6$, it follows that $k \mapsto v_k$ decreases strictly to $v$.
The reason why we introduce a speed for each scale $k$ is to allow for small errors as we
change scales. (The need for this ``perturbation'' will become clear in \eqref{e:displace} below.)

We are interested in bounding the probability of bad events $A_k$ on which the random walk 
does not move to the right with speed at least $v_k$, namely,
\begin{equation}
\label{e:bad_event}
A_k(m) = \Big\{ \exists\, (x,n) \in I_{L_k}(m)\colon\, X^{(x,n)}_{L_k} - x < v_k L_k \Big\},
\qquad k \in \N, m \in \Z^2.
\end{equation}
Note that $A_k(m)$ is defined in terms of the dynamic random environment and the random
walk within $B_{L_k}(m)$, so that $\mathbbm{1}_{A_k(m)}$ is a random variable with support
in $B_{L_k}(m)$, in the sense of Definition~\ref{d:support}. We say that $B_{L_k}(m)$ is a
\emph{slow box} when $A_k(m)$ occurs. Since we are assuming that $v_{\bullet} > v_{\circ}$
(recall \eqref{r:v_leq_v}), for each $k$ and $m$ the random variable $\mathbbm{1}_{A_k(m)}$
is non-increasing in the sense of Definition~\ref{d:monotone}.

Define recursively a sequence $(\rho_k)_{k\in{\mathbb{Z}_+}}$ of densities by putting
\begin{equation}
\label{e:density}
\rho_0 > 0, \qquad \rho_{k+1} = (1+ L_{k}^{-1/16})\rho_{k}.
\end{equation}
Again, we introduce a density for each scale $k$ in order to allow for small errors. (The
need for this ``sprinkling" will become clear in \eqref{e:use_decouple} below.)
Observe that $\rho_k$ increases strictly to $\rho_\star$ defined by
\begin{equation}
\label{defrhostar}
\rho_\star = \rho_0 \prod_{l = 0}^{\infty} (1 + L_{l}^{-1/16}) \in (\rho_0, \infty).
\end{equation}

Finally, define
\begin{equation}
\label{e:p_k}
p_k = \P^{\rho_k}(A_k(0)) = \mathbb{P}^{\rho_k}(A_k(m)), \qquad k \in \N,
\end{equation}
where the last equality holds for all $m \in \Z^2$ because of translation invariance.

\subsection{Estimates on $p_k$}
\label{ss:pkest}

Lemmas~\ref{l:recursion}--\ref{l:trigger} below show that $p_k$ decays very rapidly with $k$
when $\rho_0$ is chosen large enough.

The first step is to prove a recursion inequality that relates $p_{k+1}$ with $p_k$:

\newconstant{c:recursion0}
\newconstant{c:recursion}
\begin{lemma}
\label{l:recursion}
Fix $\rho_0 \geq 1$. There is a $k_0 = k_0(\delta)$ such that, for all $k > k_0(\delta)$,
\begin{equation}
\label{e:recursion}
p_{k+1} \leq \useconstant{c:recursion0} L_k^2
\left[p_k^2 + L_k e^{- \useconstant{c:recursion} L_k^{1/8}}\right].
\end{equation}
\end{lemma}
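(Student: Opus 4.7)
The plan is to show that $A_{k+1}(0)$ forces the random walk to cross at least two scale-$k$ slow boxes that are separated enough in time, and then to combine a union bound with Corollary~\ref{c:decouple}, the sprinkling built into $\rho_{k+1}=(1+L_k^{-1/16})\rho_k$, and the monotonicity of $\mathbbm{1}_{A_k(m)}$ to turn the resulting joint probability into a product $p_k\cdot p_k$.

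First I would chop time. Set $\ell := \lfloor L_k^{1/2}\rfloor$, so that $L_{k+1} = \ell L_k$, and for any $(x,0)\in I_{L_{k+1}}$ witnessing $A_{k+1}(0)$ let $D_j := X^{(x,0)}_{jL_k} - X^{(x,0)}_{(j-1)L_k}$ for $j=1,\dots,\ell$. Call the $j$-th chunk \emph{slow} when $D_j < v_k L_k$. Using $D_j \geq -L_k$ always, that a non-slow chunk contributes at least $v_k L_k$, and that $\sum_j D_j < v_{k+1}\ell L_k$, one checks that having only $N$ slow chunks forces $(v_k - v_{k+1})\ell < N(v_k+1)$; since $v_k - v_{k+1} = 6\delta/(\pi^2 k^2)$ while $\ell \ge L_k^{1/2}$ grows super-exponentially in $k$, for $k \geq k_0(\delta)$ the left-hand side beats $3(v_k+1)$, so at least three chunks are slow. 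Pigeonhole then produces two slow indices $j_a < j_b$ with $j_b - j_a \geq 2$. Attaching to each slow $j$ the index $m_j := (\lfloor X^{(x,0)}_{(j-1)L_k}/L_k\rfloor,\,j-1)$, the starting point $(X^{(x,0)}_{(j-1)L_k},(j-1)L_k)$ lies in $I_{L_k}(m_j)$ and $D_j$ witnesses $A_k(m_j)$. Since the horizontal range of the walk is contained in $[-L_{k+1},2L_{k+1}]$, the number of admissible ordered pairs $(m_a,m_b)$ of indices for scale-$k$ boxes inside $B_{L_{k+1}}$ with $s_b \geq s_a + 2$ is at most $c L_k^2$, so a union bound gives
\begin{equation*}
p_{k+1} \leq c L_k^2 \max_{(m_a,m_b)}\mathbb{P}^{\rho_{k+1}}\big[A_k(m_a)\cap A_k(m_b)\big].
\end{equation*}

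Next I would decouple. For any admissible pair, $B_{L_k}(m_a)$ and $B_{L_k}(m_b)$ have temporal separation at least $L_k$, so Corollary~\ref{c:decouple} applies with $n = L_k$, $\rho = \rho_k$, and the $\{0,1\}$-valued non-increasing indicators $\mathbbm{1}_{A_k(m_a)}$, $\mathbbm{1}_{A_k(m_b)}$. Since $\rho_k(1+L_k^{-1/16}) = \rho_{k+1}$ by the recursive definition of $(\rho_k)$ and $\textnormal{per}(B_1)+n = O(L_k)$, it yields
\begin{equation*}
\mathbb{P}^{\rho_{k+1}}\big[A_k(m_a)\cap A_k(m_b)\big] \leq \mathbb{P}^{\rho_{k+1}}[A_k(m_b)]\,\mathbb{P}^{\rho_k}[A_k(m_a)] + c\,L_k\,e^{-cL_k^{1/8}}.
\end{equation*}
Monotonicity (Definition~\ref{d:monotone} combined with $\rho_{k+1} \geq \rho_k$) bounds the first factor on the right by $p_k$, while the second equals $p_k$; multiplying by the prefactor $c L_k^2$ yields \eqref{e:recursion}.

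The main obstacle I expect is the first step: producing \emph{two} time-separated scale-$k$ slow boxes (rather than a single one) is exactly what promotes $p_k$ to $p_k^2$, and it requires the velocity gap $v_k - v_{k+1}$ to dominate the crude $-L_k$ contribution a slow chunk may make to the total displacement. This succeeds only once the super-exponentially growing $\ell$ beats the polynomial factor $k^2$, which is the reason for the threshold $k_0(\delta)$. A secondary bookkeeping concern is that Corollary~\ref{c:decouple} demands a \emph{sprinkled} density on one side; the schedule $\rho_{k+1}=(1+L_k^{-1/16})\rho_k$ is tailored so that this sprinkling aligns exactly with the scale-$(k+1)$ density and can then be removed from the remaining factor using monotonicity.
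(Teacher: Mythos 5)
Your proposal is correct and follows essentially the same route as the paper's proof: partition $[0,L_{k+1}]$ into $\lfloor L_k^{1/2}\rfloor$ time layers of height $L_k$, use the scale-$k$ velocity margin $v_k - v_{k+1} = 6\delta/(\pi^2 k^2)$ (together with $D_j \ge -L_k$) to force at least three slow layers once $k \ge k_0(\delta)$, pick two at time-separation at least $2L_k$, and then apply the sprinkled decoupling of Corollary~\ref{c:decouple} plus the monotonicity of $\mathbbm{1}_{A_k(m)}$ and a union bound over the $O(L_k^2)$ admissible pairs. The only cosmetic difference is that you derive the "$\ge 3$ slow chunks" claim via the explicit inequality $\ell(v_k - v_{k+1}) < N(v_k+1)$ rather than by the paper's direct contradiction computation in \eqref{e:displace}, but these are the same estimate.
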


\begin{proof}
Let $k_0 = k_0 (\delta)$ be a non-negative integer such that, for all
$k \geq k_0(\delta)$,
\begin{equation}
\label{e:k2large}
\delta \left(\frac{6}{\pi^2} \right)\frac{1}{k^2}\geq
\frac {4}{\lfloor L_k^{1/2} \rfloor}.
\end{equation}
The existence of $k_0$ follows from the fact that $L_k$ increases faster than exponentially in $k$.
We begin by claiming the following:
\begin{equation}
\label{e:A_k+1_k}
\begin{array}{ll}
&\text{For all $k \geq k_0$, if $A_{k+1}(0)$ occurs, then there are at least}\\
&\text{three elements $m_1 = (r_1, s_1)$, $m_2 = (r_2, s_2)$, $m_3 = (r_3, s_3)$ in $M_k$,}\\
&\text{with $s_i \neq s_j$ when $i \neq j$, such that $A_k (m_i)$ occurs for $i = 1,2,3$.}
\end{array}
\end{equation}
The proof is by contradiction. Suppose that the claim is false. Then there are at most
two elements $m = (r,s)$, $m' = (r', s')$ in $M_k$, with $s \neq s'$, such that $B_{L_k}(m)$
and $B_{L_k}(m')$ are slow boxes. It then follows that, for any $(x,n) \in I_{L_{k+1}}$,
\begin{equation}
\label{e:displace}
\begin{array}{rcl}
X^{(x,n)}_{L_{k+1}} -x & = & \sum_{j = 1}^{ \lfloor L_k^{1/2} \rfloor}
\big[X^{(x,n)}_{j L_k} - X^{(x,n)}_{(j-1) L_k}\big]\\[0.2cm]
& \geq & -2 L_{k} + v_k L_k \left(\frac{L_{k+1}}{L_k} - 2\right) \\ [0.2cm]
& \geq & -4L_k + v_k L_{k+1},
\end{array}
\end{equation}
where the terms in the sum correspond to the displacements over the $\lfloor L_k^{1/2} \rfloor$
time layers of height $L_k$ in the box $B_{L_{k+1}}$. The term $-2L_k$ appears in the right-hand
side of the first inequality because there are at most two layers (associated with the two slow
boxes mentioned above) in which the total displacement of the random walk is at least $-L_k$,
since the minimum speed is $-1$. The second inequality uses that $v_k \leq 1$. 

By the definition of $k_0(\delta)$ we get,
\begin{equation}
\begin{array}{e}
-4L_k + v_k L_{k+1}
&=& -4 L_k + \left[ \delta \left(\tfrac{6}{\pi^2} \right)\tfrac{1}{k^2}  \right]
L_{k+1} + v_{k+1} L_{k+1}\\[3mm]
&=& - \tfrac {4}{\lfloor L_k^{1/2} \rfloor} L_{k+1}
+ \left[ \delta \left(\tfrac{6}{\pi^2} \right)\tfrac{1}{k^2}  \right] L_{k+1}+ v_{k+1} L_{k+1}\\[3mm]
&\geq & v_{k+1} L_{k+1}.
\end{array}
\end{equation}
Substituting this into \eqref{e:displace} we get
\begin{equation}
X^{(x,n)}_{L_{k+1}} - x \geq v_{k+1} L_{k+1} \qquad \forall\, (x,n) \in I_{L_{k+1}},
\end{equation}
so that $A_{k+1}(0)$ cannot occur. This proves the claim \eqref{e:A_k+1_k}.

Thus, on the event $A_{k+1}(0)$, we may assume that there exist $m_1 = (r_1, s_1)$,
$m_3 = (r_3, s_3)$ in $M_k$ such that $s_3 \geq s_1 + 2$, meaning that the vertical
distance between $B_{L_k}(m_3)$ and $B_{L_k}(m_1)$ is at least $L_k$. It follows
from Corollary~\ref{c:decouple} and the fact that the events $A_k(m)$ are non-increasing
that
\begin{equation}
\label{e:use_decouple}
\begin{array}{rcl}
\mathbb{P}^{\rho_{k+1}} \left( A_k (m_1) \cap A_k (m_2) \right)
& \leq & \mathbb{P}^{\rho_{k+1}} \left( A_k (m_1) \right)
\mathbb{P}^{\rho_k} \left( A_k(m_3) \right) \\[0.2cm]
&  & \qquad +  c\, [\text{per}(B_{L_k}) + L_k]\, e^{- c \rho_{k} L_k^{1/8}} \\[0.2cm]
&\leq & \mathbb{P}^{\rho_{k}} \left( A_k (m_1) \right)^2
+  c\, [\text{per}(B_{L_k}) + L_k]\, e^{- c \rho_{k} L_k^{1/8}} \\[0.2cm]
&\leq & p_k^2 + c L_k e^{ - c L_k^{1/8}},
  \end{array}
\end{equation}
where $\text{per}(B_{L_k})$ denotes the perimeter of $B_{L_k}$, and in the last inequality
we use that $\rho_k \geq \rho_0 \geq 1$.
Since there are at most $c (L_{k+1}/L_k)^4 = c \lfloor
L_k^{1/2} \rfloor^4$ possible choices of pairs of boxes $B_{L_k}(m_1)$ and $B_{L_k}(m_3)$
in $M_k$, it follows that
\begin{equation}
p_{k+1} \leq c L_k^2 \left[p_k^2 + L_k e^{-c L_k^{1/8}}\right],
\end{equation}
which completes the proof of \eqref{e:recursion}.
\end{proof}

Next, we prove a recursive estimate on $p_k$.

\begin{lemma}
\label{l:induction}
There exists a $k_1 = k_1(\delta) \geq k_0(\delta)$ such that, for all $k \geq k_1$,
\begin{equation}
\label{e:recstep}
p_k < e^{-\log^{3/2} L_k} \quad \Longrightarrow \quad p_{k+1} < e^{-\log^{3/2} L_{k+1}}.
\end{equation}
\end{lemma}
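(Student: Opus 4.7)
The plan is to plug the hypothesis $p_k < e^{-\log^{3/2} L_k}$ into the recursion from Lemma~3.1 and verify that the resulting bound on $p_{k+1}$ is at most $e^{-\log^{3/2} L_{k+1}}$ once $k$ is large. The key arithmetic fact I will exploit is that the scales grow as $\log L_{k+1} = \log L_k + \log \lfloor L_k^{1/2} \rfloor$, so $\log L_{k+1}/\log L_k \to 3/2$ as $k \to \infty$. Since $(3/2)^{3/2} \approx 1.837 < 2$, one can fix some $\varepsilon > 0$ and a threshold $k_1 \geq k_0(\delta)$ so that
\begin{equation*}
\log^{3/2} L_{k+1} \leq (2-\varepsilon)\log^{3/2} L_k \qquad \forall\, k \geq k_1.
\end{equation*}
This gives the crucial slack: squaring $p_k$ in the recursion more than compensates for the growth of $\log^{3/2} L_{k+1}$ over $\log^{3/2} L_k$.

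Using Lemma~3.1, under the induction hypothesis,
\begin{equation*}
p_{k+1} \,\leq\, \useconstant{c:recursion0} L_k^2 \, e^{-2 \log^{3/2} L_k}
\;+\; \useconstant{c:recursion0}\, L_k^3 \, e^{-\useconstant{c:recursion} L_k^{1/8}}.
\end{equation*}
For the first term, taking logarithms gives $\log \useconstant{c:recursion0} + 2\log L_k - 2\log^{3/2} L_k$, and I need this to be at most, say, $-\log^{3/2} L_{k+1} - \log 2$. By the bound displayed above, it suffices to verify
\begin{equation*}
\varepsilon \log^{3/2} L_k \;\geq\; 2 \log L_k + \log(2\useconstant{c:recursion0}),
\end{equation*}
which holds for all sufficiently large $k$ because $\log^{3/2} L_k$ dominates $\log L_k$. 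For the second (decoupling error) term, I use that $L_k^{1/8}$ grows much faster than $\log^{3/2} L_{k+1}$ (note $\log^{3/2} L_{k+1} \sim (3/2)^{k+1} (\log L_0)^{3/2}$, which is polylogarithmic in $L_k$), so the estimate $\useconstant{c:recursion0} L_k^3 e^{-\useconstant{c:recursion} L_k^{1/8}} \leq \tfrac12 e^{-\log^{3/2} L_{k+1}}$ also holds for all $k$ large enough.

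Adding the two halves yields $p_{k+1} < e^{-\log^{3/2} L_{k+1}}$ for $k \geq k_1$ chosen large enough so that both of the above elementary comparisons hold, and also $k_1 \geq k_0(\delta)$ so that Lemma~3.1 applies. This completes the proof. The only ``obstacle'' worth flagging is the numerical inequality $(3/2)^{3/2} < 2$: the scale growth exponent $3/2$ built into the definition $L_{k+1} = \lfloor L_k^{1/2}\rfloor L_k$ is precisely tuned so that the stretched exponent $3/2$ in the target bound $e^{-\log^{3/2} L_k}$ is preserved under squaring across scales, with a strictly positive margin that absorbs the polynomial prefactor $L_k^2$ and the decoupling error. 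The whole argument is otherwise a routine asymptotic check.
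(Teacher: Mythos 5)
Your proof is correct and follows essentially the same route as the paper's: plug the hypothesis into the recursion from Lemma~\ref{l:recursion}, use the numerical slack $2 - (3/2)^{3/2} > 0$ coming from $L_{k+1} \le L_k^{3/2}$ to absorb the polynomial prefactor, and kill the decoupling error by the super-polylogarithmic growth of $L_k^{1/8}$. (Minor cosmetic slip: $\log^{3/2} L_{k+1} \sim (3/2)^{3(k+1)/2}(\log L_0)^{3/2}$, not $(3/2)^{k+1}(\log L_0)^{3/2}$, but the conclusion that it is polylogarithmic in $L_k$ is unaffected.)
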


\begin{proof}
Suppose that $p_k < e^{-\log^{3/2} L_k}$ for some $k \geq k_0 (\delta)$. For such a $k$,
Lemma \ref{l:recursion} gives
\begin{equation}
\label{e:recursion_2}
p_{k+1} \leq \useconstant{c:recursion0} L_k^2
\left[p_k^2 + L_k e^{-\useconstant{c:recursion} L_k^{1/8}}\right]
\leq \useconstant{c:recursion0} L_k^2 \left[e^{-2\log^{3/2} L_k} +
L_k e^{-\useconstant{c:recursion} L_k^{1/8}}\right].
\end{equation}
Pick $k_1 = k_1(\delta) \geq k_0 (\delta)$ such that
\begin{equation}
\label{e:recursion_3}
\useconstant{c:recursion0} L_k^2 \left(e^{-(1/10) \log^{3/2} L_k}
+ L_k e^{- \useconstant{c:recursion} L_k^{1/8} + \log^{3/2} L_{k+1}} \right)
< 1 \qquad  \forall\, k \geq k_1,
\end{equation}
which is possible because  $\lim_{k\to\infty} L_k = \infty$. Dividing \eqref{e:recursion_2}
by $e^{-\log^{3/2} L_{k+1}}$, recalling from \eqref{e:L_k+1} that $L_{k+1} \leq L_k^{3/2}$
and using \eqref{e:recursion_3}, we get
\begin{equation}
\begin{split}
p_{k+1}\,e^{\log^{3/2} L_{k+1}}
& \leq \useconstant{c:recursion0} L_k^2
\Big[e^{ -\overbrace{\scriptstyle(2 - (3/2)^{3/2})}^{> (1/10)} \log^{3/2} L_{k}}
+ L_k e^{-\useconstant{c:recursion} L_k^{1/8} + \log^{3/2} L_{k+1}}\Big]
\overset{\eqref{e:recursion_3}} < 1,
\end{split}
\end{equation}
which completes the proof of the \eqref{e:recstep}.
\end{proof}

Finally, we show that if $\rho_0$ is taken large enough, then it is possible to trigger the
recursive estimate in \ref{l:induction}.

\begin{lemma}
\label{l:trigger}
There exist $\rho_0$ large enough and $k_2 = k_2(\delta) \geq k_1(\delta)$ such that
$p_{k_2}<e^{-\log^{3/2} L_{k_2}}$.
\end{lemma}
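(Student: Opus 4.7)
The strategy is that when the environment density $\rho_0$ is very large, almost every space-time point in any fixed finite box belongs to $\mathcal{T}$, so the random walk on such a box behaves essentially as an i.i.d.\ biased simple random walk with drift $v_\bullet$. Since $v_{k_2}<v_\bullet$, standard concentration will make $p_{k_2}$ exponentially small in $L_{k_2}$, easily beating the target $e^{-\log^{3/2}L_{k_2}}$. The two choices must be made in the right order: $k_2$ is fixed first, large enough depending only on $\delta$; then $\rho_0$ is chosen large enough depending on $k_2$.

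First I would introduce the ``fully occupied'' event
\[
E = \bigl\{\, B_{L_{k_2}} \subset \mathcal{T}(\omega)\,\bigr\}.
\]
Since $N(x,n)$ is Poisson($\rho_{k_2}$) at every site in equilibrium, $\mathbb{P}^{\rho_{k_2}}((x,n)\notin\mathcal{T})=e^{-\rho_{k_2}}$, and a union bound over the $O(L_{k_2}^2)$ sites of $B_{L_{k_2}}$ gives
\[
\mathbb{P}^{\rho_{k_2}}(E^c)\leq c\,L_{k_2}^2\,e^{-\rho_{k_2}}.
\]
For fixed $k_2$, taking $\rho_0$ (and hence $\rho_{k_2}\geq \rho_0$) large enough makes this at most $\tfrac12 e^{-\log^{3/2}L_{k_2}}$.

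On $E$, for every starting point $(x,n)\in I_{L_{k_2}}$ the trajectory $(Y^{(x,n)}_j)_{0\leq j\leq L_{k_2}}$ is contained in $B_{L_{k_2}}$ (since $|X^{(x,n)}_j-x|\leq j\leq L_{k_2}$ and $x\in[0,L_{k_2}]$), so every step occurs at a site of $\mathcal{T}$. By the construction of Section~\ref{ss:RWDRE}, the increments $X^{(x,n)}_{j+1}-X^{(x,n)}_j$ are then $\pm 1$-valued with bias $p_\bullet$, independent across $j$ because the $U$-variables are i.i.d.\ and independent of $\omega$. Using $v_\bullet-v_{k_2}\geq \delta$ and Hoeffding's inequality,
\[
\mathbb{P}^{\rho_{k_2}}\!\bigl(X^{(x,n)}_{L_{k_2}}-x<v_{k_2}L_{k_2}\,\bigm|\,\omega\bigr)\leq e^{-c\delta^2 L_{k_2}}\qquad\text{for all } \omega\in E,
\]
so a union bound over the $L_{k_2}+1$ starting points yields
\[
\mathbb{P}^{\rho_{k_2}}\bigl(A_{k_2}(0)\cap E\bigr)\leq (L_{k_2}+1)\,e^{-c\delta^2 L_{k_2}}.
\]

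Since $L_{k_2}$ grows much faster than $\log^{3/2}L_{k_2}$, this last bound is at most $\tfrac12 e^{-\log^{3/2}L_{k_2}}$ once $k_2=k_2(\delta)\geq k_1(\delta)$ is fixed sufficiently large. Combining the two pieces gives $p_{k_2}<e^{-\log^{3/2}L_{k_2}}$, as required. The only real subtlety is respecting the order of quantifiers: $k_2$ must be determined from $\delta$ alone (so that the Hoeffding term dominates the target), and only afterwards can $\rho_0$ be taken large enough to swamp the factor $L_{k_2}^2$ in the bound on $\mathbb{P}^{\rho_{k_2}}(E^c)$. Nothing in the renormalisation scheme obstructs making $\rho_0$ that large, since the scheme is monotone in $\rho_0$.
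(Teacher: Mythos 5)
Your proposal is correct and follows essentially the same route as the paper: the paper introduces the "fully occupied" event $C_{k}$ (your $E$), bounds $p_k$ by $\mathbb{P}^{\rho_k}(A_k(0)\mid C_k)+\mathbb{P}^{\rho_k}(C_k^c)$, uses that on $C_k$ the walk is a homogeneous biased random walk with drift $v_\bullet$ to get an exponential-in-$L_k$ bound, fixes $k_2$ first from $\delta$, and only then takes $\rho_0$ large to kill $\mathbb{P}^{\rho_{k_2}}(C_{k_2}^c)\le 3L_{k_2}^2 e^{-\rho_{k_2}}$. Your explicit remark about the order of quantifiers matches the paper's structure exactly.
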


\begin{proof}
Recall \eqref{e:box}, \eqref{e:bad_event} and \eqref{e:p_k}. Recall also from \eqref{e:defIPS}
that $N(x,n)$ denotes the number of particles in our dynamic random environment that cross
$(x,n)$ (i.e., $N(x,n) = \omega(\{w \in W; w(n) = x\})$), and let
\begin{equation}
C_k = \Big\{N(x,n) \geq 1 \,\,\forall\,(x,n) \in B_{L_k} \Big\}
\end{equation}
be the event that all space-time points in $B_{L_k}$ are occupied by a particle.
Estimate
\begin{equation}
\label{e:p_k A_k C_k}
p_k \leq \mathbb{P}^{\rho_k}( A_k(0) \mid C_k) + \mathbb{P}^{\rho_k} (C_k^c).
\end{equation}
The first term in the right-hand side of \eqref{e:p_k A_k C_k} can be estimated from above by
\begin{equation}
\label{e:A_k|C_k}
\begin{array}{rcl}
\mathbb{P}^{\rho_k}(A_k(0) \mid C_k)
&\leq& L_k \mathbb{P}^{\rho_k} (X^0_{L_k} < v_k L_k \mid C_k) \\[0.2cm]
&\leq& L_k \mathbb{P}^{\rho_k} ( X_{L_k}^0 < v_1 L_k \mid C_k)\\[0.2cm]
&=& L_k \mathbb{P}^{\rho_k} \Big(\frac{X_{L_k}^0}{L_k}  < v_\bullet - \delta \mid C_k\Big),
\end{array}
\end{equation}
where the last inequality uses \eqref{e:seq_vel}. On the event $C_k$, all the space-time points
of $B_{L_k}$ in the dynamic random environment are occupied, and so the law of
$(X_n^{0})_{0 \leq n \le L_k}$ coincides with that of a nearest-neighbour random walk
with drift $v_{\bullet}$ starting at $0$. Therefore, by an elementary large deviation estimate,
we have
\begin{equation}
\mathbb{P}^{\rho_k}(A_k(0) \mid C_k) \leq  L_k e^{-c(\delta) L_k}, \qquad k \in \N,
\end{equation}
independently of the choice of $\rho_0$. We can therefore choose $k_2 = k_2(\delta)$ large
enough so that
\begin{equation}
\label{e:A_k_2|C_k_2}
\mathbb{P}^{\rho_k}(A_{k_2}(0) \mid C_{k_2}) \leq \tfrac12 e^{-\log^{3/2} L_{k_2}}.
\end{equation}

Having fixed $k_2$, we next turn our attention to the second term in the right-hand side of
\eqref{e:p_k A_k C_k}. Recalling that, under $\mathbb{P}^{\rho_{k_2}}$, the random variables
$(N(x,n))_{x \in \mathbb{Z}}$ are $\Poisson(\rho_{k_2})$, we have $\mathbb{P}^{\rho_{k_2}}
(C_{k_2}^c) \leq 3 L^2_{k_2} e^{-\rho_{k_2}}$. Since this tends to zero as $\rho_0\to\infty$
(recall \eqref{e:density}), we can take $\rho_0$ large enough so that
\begin{equation}
\label{e:p_kC_k_2c}
\mathbb{P}^{\rho_{k_2}}(C_{k_2}^c) \leq \tfrac12 e^{-\log^{3/2} L_{k_2}}.
\end{equation}
Combine \eqref{e:p_k A_k C_k}, \eqref{e:A_k_2|C_k_2} and \eqref{e:p_kC_k_2c} to get the claim.
\end{proof}

\subsection{Large deviation bounds}
\label{ss:ldest}

Together with Lemmas~\ref{l:induction}--\ref{l:trigger}, the following lemma will allow us to
prove Theorem~\ref{thm:LD}.

Define the half-plane
\begin{equation}
\label{e:cal_H}
\mathcal{H}_{v,L} = \big\{ (x,n) \in \mathbb{Z}^2\colon\,x \leq n v - L  \big\}.
\end{equation}

\newconstant{c:k_hit_line}
\begin{lemma}
\label{l:hit_line}
Fix $v < v_\bullet$ and $\rho > 0$. Suppose that $\mathbb{P}^\rho(A_k) \leq e^{-\log^{3/2} L_k}$
for all $k \geq \useconstant{c:k_hit_line}$ (where $A_k$ is defined in terms of $v$ as in
\eqref{e:bad_event}). Then, for any $\varepsilon > 0$,
\begin{equation}
\label{e:hit_line}
\mathbb{P}^\rho \Big(\exists\,n\in\N\colon\, Y_n \in \mathcal{H}_{v - \varepsilon,L} \Big)
\leq c (\varepsilon, \useconstant{c:k_hit_line}) L^{7/2} \,e^{-c\log^{3/2} L}
\qquad \forall\, L \in\N.
\end{equation}
\end{lemma}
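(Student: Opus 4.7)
The plan is to convert the event $\{\exists n\colon Y_n \in \mathcal{H}_{v-\varepsilon, L}\}$ into a statement about the walk encountering many ``slow'' scale-$k$ boxes, for a well-chosen $k = k(L)$, and then bound this using the hypothesis $\P^\rho(A_k) \leq e^{-\log^{3/2} L_k}$ together with the decoupling of Corollary~\ref{c:decouple}. For $L$ bounded the conclusion is trivial by adjusting the constant in the statement, so I may assume $L$ is as large as needed.

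\emph{Choice of scale and deterministic reduction.} Let $k = k(L)$ be the largest integer with $L_k \leq L/C$ for a constant $C = C(v,\varepsilon)$ chosen large. The recursion $L_{k+1} \leq L_k^{3/2}$ gives $L_k \geq (L/C)^{2/3}$, so that $\log^{3/2} L_k \geq c(v,\varepsilon)\log^{3/2} L$, and for $L$ large enough $k \geq \useconstant{c:k_hit_line}$, so the hypothesis applies. For each $j \geq 0$ let $m_j \in \Z^2$ be the unique index such that $(X_{jL_k}, jL_k) \in I_{L_k}(m_j)$. Whenever $A_k(m_j)$ fails, the walk advances by at least $v_k L_k$ over the next $L_k$ steps; otherwise only the trivial lower bound $-L_k$ is available. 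Hence, if at most $K$ of the boxes $B_{L_k}(m_0), \ldots, B_{L_k}(m_{J(n)-1})$, with $J(n) := \lceil n/L_k\rceil$, are slow, then $X_n \geq n v_k - 2(K+1)L_k$. Combining with $X_n \leq n(v-\varepsilon) - L$ and $v_k > v$ forces
\[
K \geq K(n, L) := \bigl\lceil (n\varepsilon + L)/(2L_k)\bigr\rceil - 1.
\]

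\emph{Probabilistic estimate for fixed $n$ and summation.} Since $|X_{jL_k}| \leq jL_k$, at each layer $j$ only boxes $(r, j)$ with $|r| \leq j+1$ can host $m_j$, and each such box is slow with probability $p_k \leq e^{-\log^{3/2} L_k}$ by translation invariance. A union bound over $K(n,L)$-subsets of layers, together with Corollary~\ref{c:decouple} applied iteratively to temporally separated layers (the sprinkling of $\rho$ being absorbed into a small increase of $\rho$ via the monotonicity of Definition~\ref{d:monotone}), yields
\[
\P^\rho\bigl(Y_n \in \mathcal{H}_{v-\varepsilon, L}\bigr) \leq \binom{J(n)}{K(n,L)}\bigl((2J(n) + 3)\,p_k\bigr)^{K(n,L)} + J(n)^2 e^{-c L_k^{1/8}}.
\]
Using $\binom{J}{K} \leq (eJ/K)^K$ and the fact that $K(n,L) \geq L/(2L_k)$ for $n \lesssim L/\varepsilon$ while $K(n,L) \geq n\varepsilon/(2L_k)$ for $n \gtrsim L/\varepsilon$, a geometric calculation shows that summing over $n \geq L/(1+v-\varepsilon)$ (the minimum entry time, since $X_n \geq -n$), the main contribution comes from $n \asymp L^{3/2}$, producing a polynomial prefactor of order $L^{7/2}$ against the stretched exponential $e^{-c \log^{3/2} L}$, while the tail $n \gg L^{3/2}$ contributes sub-polynomially. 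The additive decoupling errors sum to the same order once $C$ is chosen large enough.

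\emph{Main obstacle.} The technical heart of the argument is the iterated application of Corollary~\ref{c:decouple} in the probabilistic estimate: one has to decouple $K(n,L)$ slow-box events across many layers and spatial positions while keeping the cumulative sprinkling of $\rho$ within a multiplicative factor absorbable by the monotonicity of Definition~\ref{d:monotone}, and the cumulative additive errors within the target bound. This is the same kind of bookkeeping that drives the recursion for $p_k$ in Lemmas~\ref{l:recursion}--\ref{l:trigger}, and the present lemma may be viewed as a ``hitting'' analogue of that recursive estimate tailored to the event $\{\exists n\colon Y_n \in \mathcal{H}_{v-\varepsilon,L}\}$.
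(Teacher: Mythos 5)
Your approach differs materially from the paper's and has a genuine gap.

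\textbf{What the paper does.} The paper's proof uses \emph{no decoupling at all} inside Lemma~\ref{l:hit_line}. It picks a threshold scale $\check k$ with $2L_{\check k+2} \le L < 2L_{\check k+3}$, defines the ``good'' event $B_{\check k} = \bigcap_{k \ge \check k}\bigcap_{m \in M'_k} A_k(m)^c$ ranging over \emph{all} scales $k \ge \check k$ and all boxes $m$ in a finite neighbourhood of the origin, bounds $\mathbb P^\rho(B_{\check k}^c)$ by a plain union bound using the hypothesis $\mathbb P^\rho(A_k) \le e^{-\log^{3/2}L_k}$, and then argues \emph{deterministically} (the induction in \eqref{e:Yj_on_J} and interpolation \eqref{e:Yt_lower}) that on $B_{\check k}$ the walk never enters $\mathcal H_{v-\varepsilon,L}$. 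The crucial structural point is that a \emph{single} slow box, at the right scale, is enough to account for the walk's deficit; because the scales are nested, one never needs to exhibit many slow boxes at a fixed scale and hence never needs to decouple.

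\textbf{Where your argument breaks.} You work at a single scale $k = k(L)$ with $L_k \asymp L$, and for each $n$ require $K(n,L) \asymp (n\varepsilon + L)/L_k$ slow boxes. To bound the probability of $K$ slow boxes across $K$ time-separated layers, you iterate Corollary~\ref{c:decouple} $K$ times. But each application costs a multiplicative sprinkling $(1 + L_k^{-1/16})$, and the structure of the corollary forces the \emph{earlier-time} factor to sit at the \emph{smaller} density. After $K$ applications, the density of the earliest factor is $\rho(1 + L_k^{-1/16})^{-(K-1)}$. Already for the dominant range $n \asymp L^{3/2}$ you have $K \asymp L^{1/2}$ and $L_k^{-1/16} \asymp L^{-1/16}$, so the cumulative drop is of order $e^{-c L^{7/16}}$, i.e.\ the density is pushed to essentially zero; for $n \gg L^{3/2}$ it is worse. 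This cannot be ``absorbed'' by monotonicity: since the events $A_k(m)$ are non-increasing in $\omega$, they become \emph{more} likely as the density decreases (indeed, at zero density, if $v_\circ < v$ the box is slow with probability one), so the factors in your would-be product do not decay. Conversely, if you tried to start the iteration at $\rho(1+L_k^{-1/16})^{K}$ so as to end at $\rho$, the starting density depends on $n$ and the non-increasing nature of the target event gives $\mathbb P^\rho \ge \mathbb P^{\rho(1+\cdots)^K}$, which is the wrong direction. The hypothesis $\mathbb P^\rho(A_k) \le e^{-\log^{3/2}L_k}$ is given only at the fixed density $\rho$, and there is no uniform version of it at smaller densities available to you. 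In short, the ``sprinkling absorbed into a small increase of $\rho$'' step is not a step but the obstruction, and the paper's multiscale union-bound design exists precisely to avoid it.

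A secondary issue: because your scale parameter $k$ is tied to $L$ via a large constant $C(v,\varepsilon)$, your binomial factor $\binom{J(n)}{K(n,L)}$ contributes a polynomial in $L$ whose degree grows with $C$; the stated prefactor $L^{7/2}$ would not come out cleanly. This is a cosmetic point next to the decoupling issue, but it signals that the single-scale bookkeeping is not the right frame.
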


\begin{proof}
We first choose $k_0 = k_0(\varepsilon)$ such that $L_{k+1}/L_k > 1 + 2/\varepsilon$ for all
$k \geq k_0$. A trivial observation is that we may assume that $L > 2 L_{k_0 \vee
\useconstant{c:k_hit_line} + 2}$, as this would at most change the constant $c(\varepsilon,
\useconstant{c:k_hit_line})$ in \eqref{e:hit_line}. We thus choose $\check k$ such that
\begin{equation}
  \label{e:between_twos}
  2L_{\check k + 2} \leq L < 2 L_{\check k + 3}.
\end{equation}
Note that $\check k \geq k_0$ by our assumption on $L$.

We next define the set of indices (see Fig.~\ref{f:Mk_bar})
\begin{equation}
M'_k = \{m \in M_k \colon\, B_{L_k}(m) \subseteq B_{L_{k+2}} (0) \},
\qquad k\in{\mathbb{Z}_+},
\end{equation}
and consider the event
\begin{equation}
B_{\check k} = \bigcap_{k \geq \check k} \bigcap_{m \in M'_{k}} A_{k}(m)^c.
\end{equation}
This event has high probability. Indeed, according to our hypothesis on the decay
of $\mathbb{P}^\rho(A_k)$, and since $\check k \geq \useconstant{c:k_hit_line}$,
we have
\begin{equation}
\label{e:PBkc}
\begin{array}{e}
\mathbb{P}^\rho(B_{\check k}^c)
& \leq & \sum_{k \geq \check k} \sum_{m \in M'_{k}} \mathbb{P}^\rho(A_{k}(m))
\leq \sum_{k \geq \check k} c
\Big(\frac{L_{{k} + 2}}{L_{k}}\Big)^2 e^{-\log^{3/2} L_{k}}\\[0.2cm]
& \leq &
c \sum_{l \geq L_{\check k}} l^{5/2} e^{-\log^{3/2} l}
\leq c L_{\check k}^{7/2} e^{ - \log^{3/2} L_{\check k}}
\leq c L^{7/2} e^{- c \log^{3/2} L},
\end{array}
\end{equation}
where in the fourth inequality we use Lemma~\ref{l:ineq}, while in the last inequality we use
that $L_{\check k} > L_{\check k + 3}^{(3/2)^{-3}} \geq cL^{(3/2)^{-3}}$ (see \eqref{e:L_k+1}
and \eqref{e:between_twos}) and that $2L_{\check k} < L$. It is therefore enough to show
that the event in \eqref{e:hit_line} is contained in $B_{\check k}^c$.

\begin{figure}[htbp]
\centering
\begin{tikzpicture}[scale=.1]
\draw (-50, 0) to  (80, 0);
\draw (-60, 0) to  (-60, 54);
\draw [fill] (0,0) circle [radius=.2];
\foreach \y in {2,1,0} {
\pgfmathparse{3^\y}
\xdef\z{\pgfmathresult}
\pgfmathparse{90 - 30*\y}
\xdef\co{\pgfmathresult}
\ifnum \y < 2
\foreach \w in {0,...,8} {
\pgfmathparse{floor(2*\w / 3)}
\xdef\r{\pgfmathresult}
\draw[fill, color=gray!\co!white] (\z*\r - \z, \w*\z) rectangle (\z*\r + 2*\z, \w*\z + \z);
\draw (-60, \w*\z) -- (-60 - 1 - 2*\y, \w*\z);
\foreach \x in {-9,...,17} {
\draw[thin] (\x*\z, \w*\z) rectangle (\x*\z + \z, \w*\z + \z);}}
\else
\foreach \w in {0,...,5} {
\pgfmathparse{floor(2*\w/ 3)}
\xdef\r{\pgfmathresult}
\draw[fill, color=gray!\co!white] (\z*\r - \z, \w*\z) rectangle (\z*\r + 2*\z, \w*\z + \z);
\draw (-60, \w*\z) -- (-60 - 2*\y*\y, \w*\z);
\draw (-60, \w*\z + \z) -- (-60 - 2*\y*\y, \w*\z + \z);
\foreach \x in {-5,...,7} {
\draw[thin] (\x*\z, \w*\z) rectangle (\x*\z + \z, \w*\z + \z);}}
\fi
\draw[ultra thin] (0,0) -- (36, 54);
}
\end{tikzpicture}
\caption{\small Illustration of space-time points $m \in M'_k$, for $k = 0,1,2$. The boxes
$B_k(m)$ for which $I_k(m)$ intersects the line of constant speed $v$ are shaded. The
set $J_0$ is drawn on the left: different scales appear with different tick lengths.}
\label{f:Mk_bar}
\end{figure}
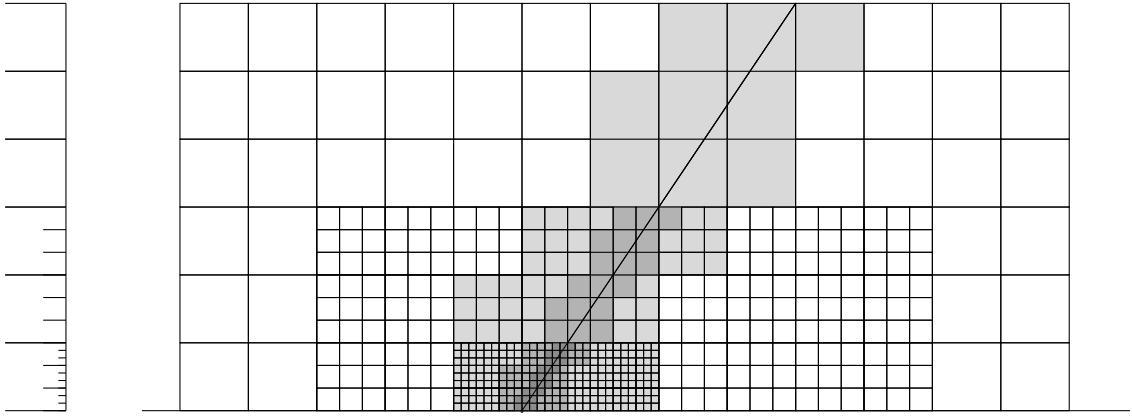

Define the set of times
\begin{equation}
J_{\check k} = \bigcup_{k \geq \check k}
\bigcup_{l = 0}^{L_{{k}+2}/L_{k}} \{ l L_{k} \}  \subset  {\mathbb{Z}_+}.
\end{equation}
We claim that on the event $B_{\check k}$,
\begin{equation}
\label{e:Yj_on_J}
X_j \geq v j \qquad \forall\,j \in J_{\check k}.
\end{equation}
To see why this is true, fix some $k \geq \check k$ as in the definition of $J_{\check k}$. It is clear
that the inequality holds for $j = 0$. Suppose by induction that $X_{l L_k} \geq v l L_{k}$ for some
$l \leq L_{k + 2}/L_k$. Observe that $Y_{l L_k}$ belongs to some box $B_k(m)$ with $m \in
M'_k \subset M'_{\check k}$. It even belongs to the corresponding interval $I_{L_k}(m)$ as defined in
\eqref{e:box_trans}. Since we are on the event $A_k(m)^c$, this implies that
\begin{equation}
X_{(l + 1)L_k} = X_{l L_k} + X^{Y_{l L_k}}_{L_k} - X_{l L_k} \geq v (l + 1) L_k,
\end{equation}
which shows that the bound in \eqref{e:Yj_on_J} holds for $l+1$. Since this can be done for any
$k \geq \check k$, we have proven \eqref{e:Yj_on_J} by induction.

We now interpolate the statement in \eqref{e:Yj_on_J} to all times $n > 2 L_{\check k + 2}
(> L_{\check k + 2} + L_{\check k})$.
More precisely, we will show that, on the event $B_{\check k}$,
\begin{equation}
\label{e:Yt_lower}
X_n \geq (v - \varepsilon) n \qquad \forall \, n \geq 2 L_{\check k + 2}.
\end{equation}
Indeed, given such a $n \geq L_{\check k + 2} + L_{\check k}$, we fix $\bar k$ to be the
smallest $k$ such that
\begin{equation}\label{e:deflbar}
\exists\,l \leq L_{\bar k + 2}/ L_{\bar k}\colon\,n \in \big[ l L_{\bar k}, (l + 1) L_{\bar k} \big),
\end{equation}
and we write $\bar l$ for this unique value of $l$.
Noting that
\begin{equation}
\bar k > \check k \geq k_0 \text{ and } \bar l \ge L_{\bar k + 1}/L_{\bar k} - 1 > 2/\varepsilon,
\end{equation}
we can put the above pieces together and estimate
\begin{align}\label{e:estim_lbar}
X_{n} & = X_{\bar l L_{\bar k}} + X^{Y_{\bar l L_{\bar k}}}_{n - \bar l L_{\bar k}} - X_{\bar l L_{\bar k}}
\geq v \bar l L_{\bar k} - L_{\bar k} = L_{\bar k} (v \bar{l} - 1) \nonumber\\
& = L_{\bar k} \big( (v - \varepsilon) \bar l + \varepsilon \bar l - 1 \big)
\geq L_{\bar k} \big( (v - \varepsilon) \bar l + 1 \big) \nonumber\\
& \geq \max\big(L_{\bar k}( \bar l + 1) (v - \varepsilon), L_{\bar k}{\bar l} (v - \varepsilon)\big) \ge (v - \varepsilon) n,
\end{align}	
where the first inequality uses \eqref{e:Yj_on_J}, $\bar k \geq \check k$ and the
definition of $\bar l$, the second inequality uses that $\bar l > 2/\varepsilon$, 
the third inequality uses that $v-\varepsilon \le 1$ and, for the fourth inequality,
we use \eqref{e:deflbar} considering separately the cases $v-\varepsilon \ge 0$, $v-\varepsilon < 0$.
This proves \eqref{e:Yt_lower}.

To complete the proof, we observe that, since $X$ is Lipschitz, having (as in \eqref{e:Yt_lower})
$X_{n} \geq (v - \varepsilon) n$ for any $n \geq 2 L_{\check k + 2}$ we get $X_{n} \geq
(v - \varepsilon) n - 2 L_{\check k + 2} \geq (v - \varepsilon) n - L$  for all $n \in{\mathbb{Z}_+}$.
Thus, we have proved that the event appearing in the right-hand side of \eqref{e:hit_line} is
contained in $B_{\check k}^c$, so that its probability is bounded as in \eqref{e:PBkc}.
\end{proof}

\begin{proof}[Proof of Theorem~\ref{thm:LD}]
Put $v = v_\star + \varepsilon$, let $\rho_0$ be large enough to satisfy Lemma~\ref{l:trigger},
and take $\rho_\star$ as in \eqref{defrhostar}. Recalling that $X_n$ is the horizontal projection
of $Y_n$ and that, by monotonicity,
\begin{equation}
\P^{\rho_\star}\left(A_k(0)\right) \le p_k \qquad \forall \; k \in \N,
\end{equation}
we see that Lemmas~\ref{l:induction}--\ref{l:hit_line} prove the large deviation bound in
Theorem~\ref{thm:LD}.
\end{proof}

\begin{remark}\label{r:diffLD}
Note that the speed in Lemma~{\rm \ref{l:hit_line}} was not chosen arbitrarily below the speed given
by the law of large numbers in \eqref{e:SLLN}. What we have obtained is that for any $v < v_\bullet$
there exists a density $\rho_0(v)$ such that \eqref{e:hit_line} holds for $\rho \geq \rho_0(v)$.
\end{remark}


\subsection{Extensions}
\label{ss:commentsreg}

The ballisticity statement in Theorem~{\rm \ref{thm:LD}} holds under mild conditions on the
underlying dynamic random environment. Indeed, the only assumptions we have made on
the law of $\mathcal{T}$ are:
\begin{enumerate}[(i)]
\item 
The monotonicity stated in Definition~{\rm \ref{d:monotone}} (see \eqref{e:use_decouple}).
\item 
The decoupling provided by Corollary~{\rm \ref{c:decouple}} (used in \eqref{e:use_decouple}).
\item 
The perturbative condition $\lim_{\rho \to \infty} \mathbb{P}_\rho[0 \in \mathcal{T}] = 1$
(used to trigger \eqref{e:p_kC_k_2c}).
\end{enumerate}
Let us elaborate a bit more on the space-time decoupling condition given by Corollary~{\rm \ref{c:decouple}}.
This condition was designed with our particular dynamic random environment in mind, which lacks
good relaxation properties. However, several dynamic random environments satisfy the simpler
and stronger condition
\begin{equation}
\label{e:fast_mixing}
\mathbb{E}^\rho[f_1 f_2] \leq \mathbb{E}^\rho [f_1] \mathbb{E}^\rho [f_2]
+ c\, \text{per}(B_1)^c e^{-c n^{\kappa}},
\end{equation}
for some $\kappa > 0$ and all $f_1$ and $f_2$ with support in, respectively, $B_1 = [a,b] \times [n, m]$ 
and $B_2 = [a', b'] \times [-n',0]$. It is important to observe that the constants appearing in 
\eqref{e:fast_mixing} are not allowed to depend on $\rho$, since the triggering of \eqref{e:p_kC_k_2c} 
is done after the induction inequality of Lemma~{\rm \ref{l:induction}}. The condition in \eqref{e:fast_mixing} 
holds, for instance, when the dynamic random environment has a spectral gap that is bounded from below 
for $\rho$ large enough. Such a property can be obtained for a variety of reversible dynamics with the help 
of techniques from Liggett~\cite{Li}. 

\medskip\noindent
\textbf{The contact process.}
It can be shown that \eqref{e:fast_mixing} holds for the supercritical contact process for non-increasing 
$f_1$, $f_2$, uniformly in infection parameters that are uniformly bounded away from the critical threshold. 
A proof can be developed using the graphical representation (see e.g.\ Remark 3.7 in 
\cite{dHodSa}) and the strategy of Theorem~\ref{t:decouple}. Note, however, that the results in \cite{dHodSa} 
already imply stronger results for the large deviations of the random walk in the regime of large infection 
parameter, namely, \eqref{e:LD2} with exponential decay.

\medskip\noindent
\textbf{Independent renewal chains.} 
Let us mention another model for which our techniques can establish a ballistic lower bound for the 
random walk. Consider the probability distribution $p = (p_n)_{n \in \mathbb{Z}_+}$ on $\mathbb{Z}_+$ 
given by $p_n = \exp[-n^{1/4}]/Z$, where $Z = \sum_{n \in \mathbb{Z}_+} \exp[-n^{1/4}]$. 
Define the Markov 
chain transition probabilities given by
\begin{equation}
g(l,m) =
\begin{cases}
\delta_{l-1}(m), \quad & \text{if $l \in \N$,}\\
p_m, & \text{if $l = 0$.}
\end{cases}
\end{equation}
This Markov chain moves down one unit a time until it reaches zero. At zero it jumps to a random height
according to distribution $p$. We call this the renewal chain with interarrival distribution $p$. It has stationary 
measure $q = (q_n)_{n \in \mathbb{Z}_+}$ given by
\begin{equation}
q_n = \frac{1}{Z'} \sum_{j \geq n} \exp[-j^{1/4}], 
\qquad Z' = \sum_{n \in \mathbb{Z}_+} \sum_{j \geq n} \exp [-j^{1/4}].
\end{equation}
For each site $x \in \mathbb{Z}$, we produce an independent copy $N(x,n)_{n \in \mathbb{Z}_+}$ of the 
above Markov chain.
Denote by $P_\nu$ the law of one chain started from the probability distribution $\nu$.
We define as a dynamic random environment the field given by these chains when starting from the stationary distribution $q$. 
 
We fix $\rho \geq 0$ and set $\mathcal{T} = \{(x,n)\colon\, N(x,n) 
< \rho \}$, so that we can define the random walk $(Y_n)_{n\in\mathbb{Z}_+}$ as in \eqref{e:y}.

In order to prove Corollary~\ref{c:decouple} for this dynamic random environment,  we would like to couple two renewal chains 
$N(0,n)$, $N'(0,n)$, starting, respectively, at $\delta_0$ and $q$, in such a way that they coalesce at a random time $T$. 
Using Proposition~3 of \cite{Lindvall79}, we obtain such a coupling with $E_{\delta_0,q}[\exp[T^{1/8}]] 
< \infty$ (note that $p$ 
is aperiodic, i.e., $\mathrm{gcd}(\supp(p)) = 1$).

We now fix any events $A \in \sigma(N(0,m) \colon\, m \leq 0)$ and $B \in \sigma(N(0,m) \colon\, m \geq n)$, 
and estimate $P_q[A \cap B] - P_q[A] P_q[B]$. For this, we first check whether $N(0,m)$ reaches zero before 
$n/2$ and, if so, we try to couple it with an independent $N'(0,m)$ starting from the stationary distribution.
This leads to
\begin{equation}
\begin{split}
P_q \big[A\cap B\big] & \leq P_q [N(0,0) > n/2] + P_q \big[A \big] \sup\nolimits_{1 \leq j \leq n/2} P_{\delta_j} \big[ B \big]\\
& \leq P_q [N(0,0)> n/2] + P_{\delta_0,q}[T \geq n/2] + P_q \big[ A \big] P_q \big[ B \big]\\
& \leq P_q \big[A \big] P_q \big[ B \big] + c \exp[- c n^{1/8}],
\end{split}
\end{equation}
where in the last inequality we use the definition of $q$ and the Markov inequality for $\exp[T^{1/8}]$.
Repeating this for every chain $N(x,n)$ with $x \in [a,b]$, we prove \eqref{e:fast_mixing} for $\mathcal{T}$ 
with $\kappa = \tfrac18$. It is clear that $\lim_{\rho\to\infty} P[0 \in \mathcal{T}] = 0$. Thus, the 
conclusion of Theorem~\ref{thm:LD} holds for the dynamic random environment $\mathcal{T}$.

In fact, also Theorem~\ref{thm:SLLN+CLT+LD} holds in this case, as a simple regeneration strategy can be found;
see Section~\ref{ss:regextensions}.
As a consequence, the statements of Theorem~\ref{thm:main} are true for this example.

\begin{remark}
Observe that $\mathcal{T}$ is not uniformly mixing. 
Indeed, given any $n \in \mathbb{Z}_+$, we can start our Markov chain 
in events with positive probability (say, $N(0,0) = 2n$) such that the information at time zero is not forgotten until time $n$.
\end{remark}


\section{Proof of Theorem~\ref{thm:SLLN+CLT+LD}: Regeneration}
\label{s:regeneration}

In this section, we state and prove two results about regeneration times
(Theorems~\ref{t:regeneration}--\ref{t:tailregeneration}) 
that are then used to prove Theorem~\ref{thm:SLLN+CLT+LD} in Section~\ref{ss:limittheorems}.
A discussion about extensions is given in Section~\ref{ss:regextensions}.

In Section~\ref{ss:morenotation} we introduce some additional 
notation in order to define our regeneration time. This definition is made in a non-algorithmic way 
and does not immediately imply that the regeneration time is finite with probability $1$. Nonetheless, 
in the latter event we are able to show in Theorem~\ref{t:regeneration} that a renewal property holds 
for the law of the random walk path. The next step is to prove Theorem~\ref{t:tailregeneration}, which
shows that the regeneration time not only is a.s.\ finite but also has a very good tail. This is accomplished 
by finding a suitable upper bound, which consists of two main steps. First, we define what we call 
\emph{good record times} and show that these appear very frequently (Proposition~\ref{p:manygrts}). 
This is done in an algorithmic fashion, but only by exploring the system locally at each step. Second, 
we show that, outside a global event of small probability, 
if we can find a good record time then we can also find nearby an upper bound for the regeneration time. 


\subsection{Notation}
\label{ss:morenotation}


Suppose that $\rho \in (0,\infty)$, $v_\star \in (0, v_{\bullet})$ and $c\in (0,\infty)$ satisfy \eqref{e:LD}.
Conditions for this are given in Theorem~\ref{thm:LD} and Remark~\ref{r:v_leq_v}. In the
sequel we abbreviate $\P = \P^\rho$.

\begin{figure}[htbp]
\centering
\begin{tikzpicture}[scale=.4, font=\small]
\draw[fill, color=gray!40!white] (.5,1) -- (.5,5) -- (10,5) -- (8,.5) -- (.5,.5);
\draw[fill, color=gray!40!white] (10,5) -- (12,9.5) -- (20.5,9.5) -- (20.5,5) -- (10,5);
\foreach \x in {1,...,20} {
\foreach \y in {1,...,9} {
\draw[fill, color=gray!20!white] (\x,\y) circle [radius=0.05];}}
\foreach \y in {1,...,5} {
\pgfmathparse{9 - floor(2*(5-\y)/5)}
\xdef\z{\pgfmathresult}
\foreach \x in {1,...,\z} {
\draw[fill] (\x,\y) circle [radius=0.08];}}
\foreach \y in {5,...,9} {
\pgfmathparse{10 - floor(2*(5-\y)/5)}
\xdef\z{\pgfmathresult}
\foreach \x in {\z,...,20} {
\draw (\x,\y) circle [radius=0.08];}}
\node[below right] at (10,5) {$y$};
\end{tikzpicture}
\caption{An illustration of the sets $\um(y)$ (represented by white circles) and
\protect\rotatebox[origin=c]{180}{$\angle$}$(y)$ (represented by filled black circles),
with $y=(x,n) \in \Z^2$.}
\label{f:cones}
\end{figure}

Define $\bar v = \tfrac13v_\star$. Let $\um(x,n)$ be the cone in the first quadrant based
at $(x,n)$ with angle $\bar v$, i.e.,
\begin{equation}
\um(x,n) = \um(0,0) + (x,n), \text{ where } \um(0,0) = \{(x,n) \in \mathbb{Z}_+^2; x \geq \bar v n\},
\end{equation}
and $\tres(x,y)$ the cone in the third quadrant based at $(x,n)$ with angle $\bar v$, i.e.,
\begin{equation}
\tres(x,n) = \tres(0,0) + (x,n), \text{ where } \tres(0,0) = \{(x,n) \in \mathbb{Z}_-^2\colon\,x < \bar v n\}.
\end{equation}
(See Figure~\ref{f:cones}.) Note that $(0,0)$ belongs to $\um(0,0)$ but not to $\tres(0,0)$.

Define the following sets of trajectories in $W$:
\begin{equation}
\begin{aligned}
W_{x,n}^\um &= \text{ trajectories that intersect $\um(x,n)$ but not $\tres(x,n)$},\\
W_{x,n}^\tres &= \text{ trajectories that intersect $\tres(x,n)$ but not $\um(x,n)$},\\
W_{x,n}^\treze &= \text{ trajectories that intersect both $\um(x,n)$ and $\tres(x,n)$}.
\end{aligned}
\end{equation}
Note that $W^\um$, $W^\tres$ and $W^\treze$ form a partition of $W$. As above, we write $Y_n$
to denote $Y^{0}_n$. For $y\in \Z^2$, define the sigma-algebras
\begin{equation}
\label{e:sigmaalgebrastraj}
\mathcal{G}^{I}_{y} = \sigma \left( \omega(A) \colon\, A \subset W^{I}_{y}, A \in \cW  \right),
I = \um,\tres,\treze,
\end{equation}
and note that these are jointly independent under $\P$. Also define the sigma-algebras
\begin{equation}
\label{e:sigmaalgebraunif}
\begin{aligned}
\mathcal{U}^{\um}_{y} & = \sigma \left( U_z \colon\, z \in \um(y) \right),\\
\mathcal{U}^{\tres}_{y} & = \sigma \left( U_{z} \colon\, z \in \tres(y) \right),
\end{aligned}
\end{equation}
and set
\begin{equation}
\label{e:sigmaalgebraFxt}
\mathcal{F}_{y} = \mathcal{G}^{\tres}_{y} \vee \mathcal{G}^{\treze}_{y} \vee \mathcal{U}^{\tres}_{y}.
\end{equation}

Next, define the \emph{record times}
\begin{equation}
\label{e:records}
R_k = \inf \{n\in{\mathbb{Z}_+}\colon\, X_n \ge (1-\bar{v})k + \bar{v} n \}, \qquad k \in \N,
\end{equation}
i.e., the time when the walk first enters the cone
\begin{equation}
\label{e:defcones}
\um_k = \um((1-\bar{v})k,0).
\end{equation}
Note that, for any $k \in \N$, $y \in \um_k$ if and only if $y+(1,1) \in \um_{k+1}$. Thus ,
$R_{k+1} \ge R_k+1$, and $X_{R_k+1}-X_{R_k}=1$ if and only if $R_{k+1} = R_k+1$.

Define a filtration $\mathcal{F}
= (\mathcal{F}_k)_{k \in \N}$ by setting $\mathcal{F}_{\infty} = \sigma\left( \omega(A) \colon\, A
\in \cW \right) \vee \sigma (U_{y} \colon y \in \Z^2)$ and
\begin{equation}
\label{e:filtration}
\mathcal{F}_k = \Big\{B \in \mathcal{F}_{\infty} \colon\, \,
\forall \, y \in \Z^2, \, \exists \, B_{y} \in \mathcal{F}_{y}
\text{ with } B \cap \{Y_{R_k} = y\} = B_{y} \cap \{Y_{R_k} = y\} \Big\},
\end{equation}
i.e., the sigma-algebra generated by $Y_{R_k}$, all $U_z$ with $z \in \tres(Y_{R_k})$
and all $\omega(A)$ such that $A \subset W^\tres_{Y_{R_k}} \cup W^\treze_{Y_{R_k}}$.
In particular, $(Y_i)_{0 \le i \le R_k} \in \mathcal{F}_k$.

Finally, define the event
\begin{equation}
\label{e:Axt}
A^{y} = \big\{Y^{y}_i \in \um(y) \,\,\forall\,i \in {\mathbb{Z}_+} \big\},
\end{equation}
in which the walker remains inside the cone $\um(y)$,
the probability measure
\begin{equation}
\mathbb{P}^{\um} (\cdot) = \mathbb{P} \left(~\cdot~ {\big |}
~\omega\big(W^{\treze}_{0}\big)=0,\, A^{0}\right),
\label{e:pmarrom}
\end{equation}
the \emph{regeneration record index}
\begin{equation}
\label{e:regrec}
\mathcal{I} = \inf\Big\{ k \in \N \colon\, \omega\big(W^{\treze}_{Y_{R_k}}\big)= 0,\,
A^{Y_{R_k}} \text{ occurs } \Big\}
\end{equation}
and the \emph{regeneration time}
\begin{equation}
\label{e:regtime}
\tau = R_{\mathcal{I}}.
\end{equation}


\subsection{Regeneration theorems}
\label{ss:regeneration}

The following two theorems are our key results for the regeneration times.

\begin{theorem}
\label{t:regeneration}
Almost surely on the event $\{\tau < \infty\}$, the process $(Y_{\tau+i} - Y_\tau)_{i \in\Z_+}$
under either the law $\mathbb{P} (~\cdot \mid \tau, (Y_i)_{0\le i \le \tau})$ or $\mathbb{P}^{\um}
(~\cdot \mid \tau, (Y_i)_{0 \le i \le \tau})$ has the same distribution as that of $(Y_i)_{i \in\Z_+}$
under $\mathbb{P}^\um(\cdot)$.
\end{theorem}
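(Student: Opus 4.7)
My approach is a disintegration over $(\mathcal{I}, Y_\tau)$ that exploits the independence of the cone sigma-algebras \eqref{e:sigmaalgebrastraj}--\eqref{e:sigmaalgebraunif}. The main conceptual obstacle is that $\mathcal{I}$ is not a stopping time for the filtration $\mathcal{F}$: the events $A^{Y_{R_k}}$ appearing in its definition look at the entire future of the walker. I bypass this via a cone-nesting observation that converts the failure of earlier regenerations into $\mathcal{F}_k$-measurable information.

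\textbf{Cone nesting (the key step).} The geometric fact to record is that $y' \in \um(y)$ implies $\um(y') \subseteq \um(y)$, which is immediate from the definition of $\um$. Using this, on $\{Y_{R_k}=y,\,A^y\}$ and for every $j<k$, the event $A^{Y_{R_j}}$ holds if and only if the walk stayed inside $\um(Y_{R_j})$ during the past interval $[R_j,R_k]$. Indeed, either this segment already leaves $\um(Y_{R_j})$ (in which case $A^{Y_{R_j}}$ fails for a past reason), or $y = Y_{R_k} \in \um(Y_{R_j})$, so $\um(y) \subseteq \um(Y_{R_j})$ and the confinement $A^y$ forces the future portion of the walk to remain in $\um(Y_{R_j})$ as well. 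This produces an $\mathcal{F}_k$-measurable event $B_k(y)$ with
\begin{equation*}
\mathbf{1}_{\{\mathcal{I}=k\}} = \mathbf{1}_{B_k(y)}\, \mathbf{1}_{\{\omega(W^\treze_y)=0\}}\, \mathbf{1}_{A^y} \quad \text{on } \{Y_{R_k}=y\}.
\end{equation*}

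\textbf{Factorisation and conclusion.} When $\omega(W^\treze_y)=0$, only trajectories in $W^\um_y$ can visit $\um(y)$, and when $A^y$ also holds the walk consults only uniforms in $\um(y)$. A standard shadow-process construction then shows that $\mathbf{1}_{A^y}$ and $(Y^y_i - y)_{i \in \Z_+}$ coincide, on $\{\omega(W^\treze_y)=0\}$, with $\mathcal{G}^\um_y \vee \mathcal{U}^\um_y$-measurable counterparts. This sigma-algebra is independent of $\mathcal{F}_y$, so for any bounded $\mathcal{F}_k$-measurable $H$ and any bounded measurable $F$ on path space,
\begin{equation*}
\E\big[H\, \mathbf{1}_{\{\mathcal{I}=k,Y_{R_k}=y\}}\, F\big((Y_{\tau+i}-Y_\tau)_{i\in\Z_+}\big)\big] = \E\big[H\, \mathbf{1}_{\{Y_{R_k}=y\}}\, \mathbf{1}_{B_k(y)}\, \mathbf{1}_{\{\omega(W^\treze_y)=0\}}\big]\cdot \alpha(y,F),
\end{equation*}
with $\alpha(y,F) = \E[\mathbf{1}_{A^y}\, F((Y^y_i-y)_{i\in\Z_+}) \mid \omega(W^\treze_y)=0]$. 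Translation invariance rewrites $\alpha(y,F) = \P(A^0 \mid \omega(W^\treze_0)=0)\, \E^\um[F((Y_i)_{i\in\Z_+})]$, and applying the identity with $F \equiv 1$ gives the matching expression for $\E[H\, \mathbf{1}_{\{\mathcal{I}=k,Y_{R_k}=y\}}]$. The common prefactor $\P(A^0 \mid \omega(W^\treze_0)=0)$ then cancels; summing over $k$ and $y$ yields
\begin{equation*}
\E\big[H\, \mathbf{1}_{\{\tau<\infty\}}\, F\big((Y_{\tau+i}-Y_\tau)_{i\in\Z_+}\big)\big] = \E\big[H\, \mathbf{1}_{\{\tau<\infty\}}\big]\, \E^\um\big[F\big((Y_i)_{i\in\Z_+}\big)\big],
\end{equation*}
which is the desired conditional identity under $\P$. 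The identity under $\P^\um$ follows by the same calculation since the defining conditioning $\{\omega(W^\treze_0)=0, A^0\}$ of $\P^\um$ lies in $\mathcal{F}_0 \subseteq \mathcal{F}_y$ and is absorbed into the first factor. The hard step is the cone-nesting argument that converts the non-stopping-time index $\mathcal{I}$ into past-measurable data; once it is in place, the rest is a routine application of the product structure built in Section~\ref{ss:morenotation}.
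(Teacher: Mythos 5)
Your argument follows the same route as the paper's: the crucial observation that for $j<k$ the event $\Gamma_j$ becomes $\mathcal{F}_k$-measurable once restricted to $\Gamma_k$ (your ``cone nesting''), followed by the $\mathcal{G}^\um_y\vee\mathcal{U}^\um_y$ versus $\mathcal{F}_y$ factorisation and translation invariance. The paper takes the cone-nesting for granted --- it merely asserts the existence of $D_{k,n}\in\mathcal{F}_n$ with $\Gamma_k\cap\Gamma_n=D_{k,n}\cap\Gamma_n$ --- so your explicit geometric justification $y'\in\um(y)\Rightarrow\um(y')\subseteq\um(y)$ is a genuine addition, and it is correct.

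One flaw in your final sentence: you assert that the conditioning event $\Gamma_0=\{\omega(W^\treze_0)=0\}\cap A^0$ defining $\P^\um$ lies in $\mathcal{F}_0$. This is false. The set $A^0$ constrains the entire future of the walk; on $\{\omega(W^\treze_0)=0\}$ its indicator is $\mathcal{G}^\um_{(0,0)}\vee\mathcal{U}^\um_{(0,0)}$-measurable, and that sigma-algebra is \emph{independent} of $\mathcal{F}_0=\mathcal{F}_{(0,0)}$, not contained in it. The correct statement --- and the one the paper uses --- is that $\Gamma_0\in\mathcal{F}_\tau$: on $\{\mathcal{I}=n\}$ the cone-nesting argument you already made converts $\Gamma_0$ into an $\mathcal{F}_n$-measurable event intersected with $\Gamma_n$, and one then applies the unconditional identity to $B\cap\Gamma_0\in\mathcal{F}_\tau$. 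Your own tools suffice to repair this; only the appeal to $\mathcal{F}_0$ is wrong.
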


\newconstant{c:tailreg}

\begin{theorem}
\label{t:tailregeneration}
There exists a constant $\useconstant{c:tailreg} > 0$ such that
\begin{equation}
\label{e:tailregeneration}
\E \left[e^{\useconstant{c:tailreg} \log^\gamma \tau} \right] < \infty
\end{equation}
and the same holds under $\mathbb{P}^\um$.
\end{theorem}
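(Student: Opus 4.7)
The plan is to bound $\tau$ via a sequence of locally checkable candidates, which we call \emph{good record times}, and then show that the first candidate that genuinely regenerates has the claimed stretched-exponential tail. This follows the two-step structure announced at the beginning of Section~\ref{s:regeneration}: good record times occur often (Proposition~\ref{p:manygrts}), and outside a small-probability global event, each good record time coincides, up to a controlled overhead, with an actual regeneration time.

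\emph{Density of good record times.} Fix parameters $T,M \in \N$ to be tuned. Declare the index $k$ \emph{good} when two local conditions hold at $R_k$: (a) the shifted walker increments $(Y_{R_k+i}-Y_{R_k})_{0 \le i \le T}$ remain in $\um(0,0)$ and reach a spatial margin $M$ strictly inside the cone at time $T$; (b) no environment trajectory whose trace enters the space-time box of radius $M$ around $Y_{R_k}$ intersects both $\um(Y_{R_k})$ and $\tres(Y_{R_k})$. Both (a) and (b) are measurable with respect to a local neighborhood of $Y_{R_k}$, hence accessible to the filtration $\mathcal{F}_k$ of \eqref{e:filtration}. The ballisticity condition \eqref{e:LD}, reapplied to the walker shifted to $Y_{R_k}$, gives that (a) has conditional probability uniformly bounded below; (b) has $P^\rho$-probability bounded below by a constant depending only on $\rho$ and $M$. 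An iterated strong-Markov construction at successive record times, exploring only the local windows required, shows that the gaps between consecutive good indices $k_1 < k_2 < \ldots$ have geometric tails.

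\emph{From good record times to regeneration.} At $R_{k_j}$, failure of $\mathcal{I}=k_j$ requires one of two global obstructions: either (a$'$) the walker exits $\um(Y_{R_{k_j}})$ at some time exceeding $T$, or (b$'$) some environment trajectory originating outside the local box contributes a point to $W^{\treze}_{Y_{R_{k_j}}}$. Obstruction (a$'$) is bounded by reapplying \eqref{e:LD} to the walker shifted to time $R_{k_j}+T$, which by (a) starts at distance $\ge M$ inside the cone, yielding probability $\le c\, e^{-c \log^\gamma M}$. Obstruction (b$'$) is handled by a Poisson moment estimate on $\omega$: a trajectory that intersects both cones while avoiding the local box of radius $M$ must cover a diffusive distance of order $M$, so each relevant trajectory contributes probability $\le c\, e^{-cM^{1/2}}$, and summing against the Poisson intensity $\rho\mu$ over the $O(M^2)$ possible starting points gives the same order.

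\emph{Conclusion and main obstacle.} Choosing $M$ (and hence $T$) to grow slowly with the attempt index $j$, e.g.\ $M \sim \log^{\alpha} j$ for $\alpha > 2/\gamma$, the combined failure probability at the $j$-th good record time decays stretched-exponentially in $\log j$. The index of the first successful attempt is thus stochastically dominated by a variable with stretched-exponential tail, and, combined with $k_j \le c j$ and $R_k \le c k$ (both with stretched-exponential probability by \eqref{e:LD} and the definition of record times), one obtains $\E\!\left[\exp\!\big(\useconstant{c:tailreg}\log^\gamma \tau\big)\right]<\infty$ for a sufficiently small $\useconstant{c:tailreg}$. The statement under $\mathbb{P}^\um$ then follows because $\mathbb{P}^\um$ is the conditioning of $\mathbb{P}$ on a positive-probability event, as can be verified en route. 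The main obstacle is controlling obstruction (b$'$) uniformly in $\mathcal{F}_k$: it is a truly global event involving environment trajectories arbitrarily far from $Y_{R_k}$, so one must carefully split $\omega$ into parts measurable in, and independent of, $\mathcal{F}_k$, and then exploit the linear-in-time separation of $\um(Y_{R_k})$ and $\tres(Y_{R_k})$. Balancing $T$ and $M$ so that (a), (b), (a$'$), and (b$'$) all combine into the same stretched-exponential tail is the central technical challenge.
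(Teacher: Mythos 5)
Your proposal shares the two-stage skeleton that the paper announces at the top of Section~\ref{s:regeneration} -- a local criterion (``good record time'') that occurs frequently, plus global obstruction events of small probability -- and several ingredients you name (reapplying \eqref{e:LD} at a shifted point, a Poisson moment bound on long-reaching trajectories, an iterated strong Markov construction) genuinely appear. But there are three concrete gaps that would have to be repaired before this yields a proof.

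First, the parameter scaling is incorrect. With $M\sim\log^\alpha j$, your obstruction bound $c\,e^{-c\log^\gamma M}$ becomes $c\,e^{-c(\alpha\log\log j)^\gamma}$, which is stretched-exponential in $\log\log j$, not in $\log j$ as claimed, and is not summable in $j$ for any $\alpha$. To get $e^{-c\log^\gamma j}$ you would need $M$ polynomial in $j$, but then the local boxes become comparable to the elapsed time and are no longer ``local.'' The paper sidesteps this entirely by fixing the macroscopic scale $T$ first, choosing $T'=\lfloor T^\epsilon\rfloor$ and $T''=\lfloor\delta\log T\rfloor$ as functions of $T$, and then bounding $\mathbb{P}(\tau>T)$ directly by (i) the probability that no good record time occurs among the first $\sim T$ records (Proposition~\ref{p:manygrts}, giving $e^{-cT^{1/2}}$), plus (ii) the probability of a single pair of global events $E_1,E_2$, each a union bound over the fixed box $[-T,T]\times[0,T]$, giving $c^{-1}e^{-c\log^\gamma T}$. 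Your scheme, with attempt-indexed parameters and no organizing scale $T$, does not produce a bound on $\mathbb{P}(\tau>T)$; ``the first success is stochastically dominated by a variable with stretched-exponential tail'' is asserted, not derived, and cannot follow from a union bound since the attempt failures at different $j$ are dependent global events.

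Second, and more fundamentally, your good-record-time conditions are not set up to be conditionally independent of the past filtration $\mathcal{F}_k$, which is essential for the ``gaps have controlled tails'' claim. Condition (b) as you state it constrains trajectories in $W^{\treze}_{Y_{R_k}}$, i.e.\ trajectories that enter the past cone $\tres(Y_{R_k})$; these are precisely the objects generating $\mathcal{G}^{\treze}_{Y_{R_k}}\subset\mathcal{F}_{Y_{R_k}}$, so conditioning on $\mathcal{F}_k$ can drive the probability of (b) to zero (for instance, if the past has revealed many trajectories lingering near $Y_{R_k}$). The paper avoids this via condition \eqref{e:good_record2}: forcing the walker to take $T''$ deterministic up-right steps via the independent uniforms $U_{Y_{R_k}+(l,l)}$, so that the regeneration candidate is $R_{k+T''}$ (not $R_k$), and the remaining conditions \eqref{e:good_record1}, \eqref{e:good_record3}, \eqref{e:good_record4} involve only the local influence field $h^T$, the $\mathcal{G}^{\um}$-sigma-algebra and future uniforms, all of which are (conditionally) independent of $\mathcal{F}_{k-T'}$ in the precise sense needed for \eqref{e:saw_pemba}. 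You also misstate the measurability: $\{R_k\text{ is a g.r.t.}\}$ cannot lie in $\mathcal{F}_k$ (it involves the walk strictly after $R_k$); in the paper it lies in $\mathcal{F}_{k+T'}$, which is why the iteration in \eqref{e:manygrts4} spaces attempts by $2T'$. Third, as a minor point, your estimate ``each relevant trajectory contributes $\le c\,e^{-cM^{1/2}}$'' understates the decay: because the two cones separate linearly, a trajectory that must travel across distance of order $M$ in time of the same order does so with probability $e^{-cM}$ (this is Lemma~\ref{l:hxt_exp}); the $M^{1/2}$ appears to confuse diffusive scaling with the ballistic separation imposed by the cone geometry. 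None of these three issues is cosmetic; without the $T''$-jump and without the $T$-indexed union-bound structure, the argument does not close.
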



\subsubsection{Proof of Theorem~\ref{t:regeneration}}

\begin{proof}
First we observe that, for all $k \in \N$ and all bounded measurable functions $f$,
\begin{equation}
\label{e:regmakessense}
\begin{aligned}
& \mathbb{E}\Big[ f\big( (Y_{R_k+i} - Y_{R_k})_{i \in {\mathbb{Z}_+}} \big),
A^{Y_{R_k}}  \mid \mathcal{F}_k \Big] \\
& \qquad \qquad = \mathbb{E}^{\um} \left[f((Y_i)_{i\geq 0}) \right] \mathbb{P}\left( A^{0}
\mid  \omega(W^{\treze}_{0})=0 \right) \;\; \text{a.s.\ on } \;
\omega(W^{\treze}_{Y_{R_k}})=0.
\end{aligned}
\end{equation}
Indeed, we have
\begin{equation}
\label{e:regmakessense2}
\begin{aligned}
& \mathbb{E}\left[ f((Y^{y}_{i})_{i\geq 0}), A^{y}, B_{y}, \omega(W^{\treze}_{y})=0, Y_{R_k} = y \right] \\
& \qquad  = \mathbb{E}^{\um} \left[f((Y_i)_{i \geq 0}) \right] \mathbb{P}\left( A^{0} \mid
\omega(W^{\treze}_{0})=0 \right) \mathbb{P}\left( B_{y},
\omega(W^{\treze}_{Y_{R_k}})=0, Y_{R_k} = y \right)
\end{aligned}
\end{equation}
for all $B_{y} \in \mathcal{F}_{y}$ because
\begin{enumerate}[(1)]
\label{verifregmakessense2}
\item
$\omega(W^{\treze}_{y}), \{Y_{R_k} = y\} \in \mathcal{F}_{y}$.
\item
On $\omega(W^{\treze}_{y})=0$, $f((Y^{y}_i)_{i \geq 0}) \mathbf{1}_{A^{y}}
\in \mathcal{G}^{\um}_{y} \vee \mathcal{U}^{\um}_{y}$.
\item
The joint distribution of $f((Y^{y}_i)_{i \geq 0})$, $A^{y}$ and $\omega(W^{\treze}_{y})$ under $\mathbb{P}$
does not depend on $y$.
\end{enumerate}
By summing \eqref{e:regmakessense2} over $y \in \Z^2$, we get \eqref{e:regmakessense}.

Next, let $\mathcal{F}_\tau$ be the sigma-algebra of the events before time $\tau$, i.e., the set of
all events $B \in \mathcal{F}_{\infty}$ such that, for each $k \in \N$, there exists a $B_k \in \mathcal{F}_k$
such that $B \cap \{\mathcal{I} = k\} = B_k \cap \{\mathcal{I} = k\}$. Note that $\tau$ and $(Y_i)_{0 \le i \le \tau}$
are measurable with respect to $\mathcal{F}_\tau$. Let $\Gamma_k = \{\omega(W^{\treze}_{Y_{R_k}})=0\}
\cap A^{Y_{R_k}}$, and note that for each $0 \le k \le n \in \N$ there exists a $D_{k,n} \in \mathcal{F}_n$
such that $\Gamma_k \cap \Gamma_n = D_{k,n} \cap \Gamma_n$. In particular, there exists a
$C_n \in \mathcal{F}_n$ such that
\begin{equation}
\{\mathcal{I} = n\} = \bigcap_{k=1}^{n-1} \Gamma_k^c \cap \Gamma_n = C_n \cap \Gamma_n.
\end{equation}
Thus, for $B \in \mathcal{F}_\tau$ and $f$ bounded measurable, we may write
\begin{align}
\label{eq:verif_reg1}
& \mathbb{E} \Big[ f\big((Y_{\tau+i}-Y_\tau)_{i \in {\mathbb{Z}_+}}\big), B, \tau < \infty \Big]
\nonumber\\
& = \sum_{n\in\N} \mathbb{E} \Big[ f\big((Y_{R_n+i}-Y_{R_n})_{i \in{\mathbb{Z}_+}}\big),
B_n, C_n, \Gamma_n \Big]
\nonumber \\
& = \sum_{n\in\N} \mathbb{E} \Big[ B_n, C_n, \omega(W^{\treze}_{Y_{R_n}})=0,
\mathbb{E} \big[ f\big((Y_{R_n+i}-Y_{R_n})_{i \in{\mathbb{Z}_+}}\big),
A^{Y_{R_n}} \mid \mathcal{F}_n \big] \Big].
\end{align}
By \eqref{e:regmakessense}, the right-hand side equals
\begin{align}\label{eq:verif_reg2}
\mathbb{E}^\um \left[f(Y)\right] \sum_{n\in\N}
\mathbb{P} \left( B_n, C_n, \omega(W^{\treze}_{Y_{R_n}})=0 \right)
\mathbb{P}\left(A^{0} \;\middle|\; \omega(W^{\treze}_{0})=0\right),
\end{align}
which, again by \eqref{e:regmakessense}, equals
\begin{align}
\label{eq:verif_reg3}
&\mathbb{E}^\um \left[f(Y)\right] \sum_{n\in\N}
\mathbb{E} \Big[ B_n, C_n, \omega(W^{\treze}_{Y_{R_n}})=0,
\mathbb{P} \left( A^{Y_{R_n}} \mid \mathcal{F}_n \right) \Big]
\nonumber\\
& \qquad \qquad = \mathbb{E}^\um \left[f(Y)\right] \sum_{n\in\N}
\mathbb{P} \left(B_n, \mathcal{I} = n \right)
\nonumber\\
& \qquad \qquad = \mathbb{E}^\um \left[f(Y)\right]
\mathbb{P} \left( B, \tau < \infty \right),
\end{align}
which proves the statement under $\mathbb{P}(\cdot)$. To extend the result to $\mathbb{P}^{\um}
= \mathbb{P}(\cdot \mid \Gamma_0)$, note that $\Gamma_0 \in \mathcal{F}_{\tau}$ because
$\Gamma_0 \cap \Gamma_n = D_{0,n} \cap \Gamma_n$ with $D_{0,n} \in \mathcal{F}_n$, and
so we may apply \eqref{eq:verif_reg3} to $B \cap \Gamma_0$.
\end{proof}


\subsubsection{Proof of Theorem~\ref{t:tailregeneration}}

In what follows the constants may depend on $v_\circ,v_\bullet$, $v_\star$ and $\rho$.
We begin with a few preliminary lemmas.

Define the \emph{influence field at a point $y \in \Z^2$} as
\begin{equation}
\label{e:hxt}
h(y) = \inf \Big\{ l \in {\mathbb{Z}_+}\colon\, \omega(W^{\treze}_y \cap W^{\treze}_{y + (l,l)}) = 0 \Big\}.
\end{equation}

\newconstant{c:h_xt1}
\newconstant{c:h_xt2}

\begin{lemma}
\label{l:hxt_exp}
There exist constants $\useconstant{c:h_xt1}, \useconstant{c:h_xt2} > 0$ (depending on
$v_\star, \rho$ only) such that, for all $y \in \mathbb{Z}^2$,
\begin{equation}
\label{e:h_xt_exp}
\mathbb{P}[h(y) > l] \leq \useconstant{c:h_xt1} e^{-\useconstant{c:h_xt2} l}, \qquad l\in{\mathbb{Z}_+}.
\end{equation}
\end{lemma}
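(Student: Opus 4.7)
The plan is to reduce the statement to a volume estimate for the intensity measure $\mu$, and then to a ballistic large deviation for a lazy SRW. Since $\omega$ is a Poisson point process with intensity $\rho\mu$,
\[
\P[h(y)>l] \;=\; 1-\exp\!\bigl(-\rho\,\mu\bigl(W^{\treze}_{y}\cap W^{\treze}_{y+(l,l)}\bigr)\bigr) \;\leq\; \rho\,\mu\bigl(W^{\treze}_{y}\cap W^{\treze}_{y+(l,l)}\bigr),
\]
and because $\mu=\sum_{x\in\Z}P_x$ is invariant under space-time translations (the marginal at any fixed time is the counting measure on $\Z$), it suffices to treat $y=(0,0)$. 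For the geometry, since $l\ge 0$ and $\bar v\in(0,1)$ one checks $\um(l,l)\subset\um(0,0)$ and $\tres(0,0)\subset\tres(l,l)$, so $w\in W^{\treze}_{(0,0)}\cap W^{\treze}_{(l,l)}$ iff $w$ hits both $\tres(0,0)$ and $\um(l,l)$.

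Next I would decompose $\mu$ according to the position $x=w(l/2)$. Under this conditioning the increments of $w$ remain i.i.d.\ lazy SRW steps, so the past $(w(l/2-u))_{u\ge 0}$ and the future $(w(l/2+u))_{u\ge 0}$ become independent lazy SRWs started at $x$. The event that $w$ enters $\um(l,l)$ translates into the forward walk reaching height $\bar v u+a_+(x)$ for some $u\ge l/2$, with $a_+(x)=l(1-\bar v/2)-x$; after reflecting the past, the event that $w$ enters $\tres(0,0)$ becomes the same type of event with $a_-(x)=x-\bar v l/2$. Crucially $a_+(x)+a_-(x)=l(1-\bar v)>0$, which will be the source of exponential gain. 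Writing $Z$ for a generic lazy SRW,
\[
\mu\bigl(W^{\treze}_{(0,0)}\cap W^{\treze}_{(l,l)}\bigr) \;\leq\; \sum_{x\in\Z} g(a_+(x))\,g(a_-(x)), \qquad g(a):=P\bigl(\exists u\ge l/2: Z(u)\ge \bar v u+a\bigr).
\]

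The final ingredient is a classical ballistic large deviation for $Z$: picking $\theta>0$ with $\beta:=\theta\bar v-\psi(\theta)>0$ (where $\psi(\theta)=\log\E[e^{\theta Z(1)}]$, and $\beta>0$ exists because $\bar v>0$ exceeds the zero mean of the lazy SRW), Chernoff plus a union bound give
\[
g(a)\;\leq\; \sum_{u\ge l/2}e^{-\beta u-\theta a}\;\leq\; C\,e^{-c(l+a)}\qquad (a\ge 0),
\]
for constants $C,c>0$ depending only on $\bar v$ (hence only on $v_\star$) and the laziness. I then split the sum over $x$ into three regimes: (i) when $\bar v l/2\le x\le l(1-\bar v/2)$, both $a_\pm(x)$ are non-negative and the product is at most $C^2e^{-cl(3-\bar v)}$, so the $O(l)$ contributing values of $x$ still yield $e^{-c'l}$; (ii) when $x<\bar v l/2$, one uses $g(a_-(x))\le 1$ but $a_+(x)>l(1-\bar v)$, so summing a geometric series in $a_+$ gives $Ce^{-cl(2-\bar v)}$; (iii) the regime $x>l(1-\bar v/2)$ is symmetric. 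Combining the three bounds, $\mu(W^{\treze}_{(0,0)}\cap W^{\treze}_{(l,l)})\le Ce^{-c''l}$, and feeding this back into the Poisson inequality yields \eqref{e:h_xt_exp}.

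The main obstacle is really only the clean bookkeeping in the three-case summation, making sure that in each regime the identity $a_+(x)+a_-(x)=l(1-\bar v)$ is harvested so that the exponential decay survives even when only one of the two SRW factors is genuinely in the ballistic large deviation regime. Once that case split is organised, the rest is a standard Chernoff/union bound calculation.
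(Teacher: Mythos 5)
Your proof is correct, and it rests on the same core reduction as the paper's: both bound $\P[h(y)>l]$ by $\rho\,\mu\bigl(W^{\treze}_{y}\cap W^{\treze}_{y+(l,l)}\bigr)$ and then show that a two-sided trajectory joining $\tres(y)$ to $\um(y+(l,l))$ must make a ballistic excursion across a spatial gap of order $(1-\bar v)l$, which is exponentially unlikely for a zero-drift SRW. (The identification of the event $\{h(y)>l\}$ with $\{\omega(W^{\treze}_{y}\cap W^{\treze}_{y+(l,l)})>0\}$, rather than the intersection over all $j\le l$, does rely on the nesting $\um(y+(l,l))\subset\um(y+(j,j))$ for $j\le l$, which you implicitly use and which is easy to check.) Where you genuinely diverge is in how the exponential gain is harvested from $\mu$. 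You slice trajectories at the midpoint time $l/2$, factor $\mu$ as a one-dimensional sum over the midpoint position $x$ of a product of past and future ballistic hitting probabilities $g(a_-(x))\,g(a_+(x))$, control $g$ by Chernoff, and sum over $x$ with a three-regime case analysis anchored on the identity $a_+(x)+a_-(x)=l(1-\bar v)$. The paper instead writes $\{h(0)>l\}\subseteq\bigcup_{y\in\tres(0,0)}\bigcup_{y'\in\um(l,l)}\{\omega(W_{y\leftrightarrow y'})>0\}$, applies a union bound over pairs of cone points, bounds each point-to-point kernel $\mu(W_{y\leftrightarrow y'})=P_0(Z_{n'-n}=x'-x)$ by Azuma's inequality, and uses the cone geometry $n'-n\le(x'-x)/\bar v$ to turn this into $e^{-\bar v(x'-x)/2}$ and sum the double series. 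The paper's route avoids your three-case bookkeeping because the double sum over $(y,y')$ handles all regimes at once, while yours reduces to a single sum and makes the independence of past and future at the midpoint (and the conserved quantity $a_+ + a_-$) explicit; both give the same exponential rate up to constants depending only on $\bar v$ (hence $v_\star$) and $\rho$.
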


\begin{proof}
By translation invariance, it is enough to consider the case $y = 0$. By the definition of $h(0)$,
we know that
\begin{equation}
\{h(0) > l\} \subseteq \big\{\exists\, y \in \tres(0,0), y' \in \um(l,l)\colon\,
\omega(W_{y \leftrightarrow y'}) > 0 \big\},
\end{equation}
where $W_{(x,n) \leftrightarrow (x',n')} = \{w \in W\colon\, w(n) = x, w(n') = x'\}$ (recall
\eqref{e:T_rho}). It follows that
\begin{equation}
\label{e:bound_Ph0}
\begin{split}
&\mathbb{P}(h(0) > l) \leq \sum_{y \in \tres (0,0)} \sum_{y' \in \um (l,l)}
\mathbb{P} (\omega(W_{y \leftrightarrow y'}) > 0)\\
& = \sum_{y \in \tres (0,0)} \sum_{y' \in \um (l,l)}
\left(1 - e^{-\rho \mu(W_{y \leftrightarrow y'})} \right)
\leq \sum_{y \in \tres (0,0)} \sum_{y' \in \um (l,l)} \rho \mu(W_{y \leftrightarrow y'}).
\end{split}
\end{equation}

Recall from Section~\ref{ss:DRE} that $P_x$ stands for the law on $W_x$ under which the family
$(Z_n)_{n \in \mathbb{Z}}$ given by $Z_n(w) = w(n)$ is distributed as a two-sided simple random
walk starting at $x$. We write $y = (x,n)$ and $y' = (x',n')$, use translation invariance of $\mu$,
and use Azuma's inequality, to get
\begin{equation}
\label{e:right_by_l}
\mu(W_{y \leftrightarrow y'}) = P_0 (Z_{n' - n} = x'- x) \leq
P_0 (Z_{n'  - n} \geq x' - x) \leq \exp \Big\{ -\frac{(x' - x)^2}{2(n' - n)} \Big\}.
\end{equation}

Combining (\ref{e:bound_Ph0}--\ref{e:right_by_l}) and noting that $n' - n \leq (x' - x)/\bar v$,
we get
\begin{equation}
\label{e:Prho_expxx}
\mathbb{P}(h(0) > l) \leq \rho \sum_{(x,n) \in \tres (0,0)} \sum_{(x',n') \in \um (l,l)}
\exp\left\{- \tfrac12 \bar v (x' - x)\right\}.
\end{equation}
For fixed $x = -k$, there are at most $k/{\bar v}$ space-time points $(x,n) \in \tres (0,0)$. Analogously,
for  fixed $x' = k' + l$, there are at most $(k'+1)/{\bar v}$ space-time points $(x', t') \in \um (l,l)$. Therefore,
using \eqref{e:Prho_expxx} we obtain
\begin{equation}
\mathbb{P}(h(0) > l) \leq \frac{\rho}{{\bar v}^2} \sum_{k,k' \in \mathbb{Z}_+} \, k (k'+1)\, e^{-{\bar v}(k + k' + l)/2}
\leq \frac{\rho}{{\bar v}^2}\, e^{-{\bar v}l/2} \Big( \sum_{k \in {\mathbb{Z}_+}} (k+1) e^{-{\bar v}k/2}\Big)^2.
\end{equation}
By choosing the constants $\useconstant{c:h_xt1}$ and $\useconstant{c:h_xt2}$ properly, we get
the claim.
\end{proof}

Choose
\begin{equation}
\label{e:eps}
\delta = \frac{1}{4 \log\big( \tfrac{1}{p_\circ \wedge p_{\bullet}} \big)}, \qquad
\epsilon = \frac{1}{4}(\useconstant{c:h_xt2} \delta \wedge 1),
\end{equation}
and put, for $T>1$,
\begin{equation}
T' = \lfloor T^\epsilon \rfloor,\qquad
T'' = \lfloor \delta \log(T) \rfloor.
\end{equation}
Define the \emph{local influence field at $(x,n)$} as
\begin{equation}
\label{e:hxt_local}
h^T(x,n) = \inf \big\{ l \in{\mathbb{Z}_+}\colon\, 
\omega( W^\um_{x-\lfloor (1-\bar{v}) T' \rfloor,n} \cap W^{\treze}_{x,n}
\cap W^\treze_{x + l, n + l}) = 0 \big\}.
\end{equation}

\begin{lemma}
\label{l:locinfl}
For all $T > 1$,
\begin{equation}
\label{e:locinfl}
\mathbb{P} \left( h^T(y) > l \;\middle|\; \mathcal{F}_{y-(\lfloor (1-\bar{v}) T' \rfloor,0)} \right)
\le \useconstant{c:h_xt1} e^{-\useconstant{c:h_xt2} l}, \quad \mathbb{P}\text{-a.s.}
\qquad \forall \,y \in \Z^2,\,l \in \Z_+,
\end{equation}
where $\useconstant{c:h_xt1}, \useconstant{c:h_xt2}$ are the same constants as in
Lemma~{\rm \ref{l:hxt_exp}}.
\end{lemma}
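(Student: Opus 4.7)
Let $y' = y - (\lfloor (1-\bar v) T' \rfloor, 0)$, so we must bound $\mathbb{P}(h^T(y) > l \mid \mathcal{F}_{y'})$. The plan is to reduce the statement to Lemma~\ref{l:hxt_exp} by exploiting two structural features of the setup: (a) the independence of the Poisson point process $\omega$ on disjoint sets of trajectories, which will allow us to drop the conditioning; and (b) a trivial set inclusion that bounds $\{h^T(y)>l\}$ by $\{h(y)>l\}$.

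First, I would check the independence. By the definition in \eqref{e:hxt_local}, the event $\{h^T(y) > l\}$ asserts that for every $l' \le l$ there exists a trajectory $w \in \omega$ lying in
\[
W^{\um}_{y'} \cap W^{\treze}_y \cap W^{\treze}_{y+(l',l')} \;\subset\; W^{\um}_{y'}.
\]
Hence $\mathbf{1}_{\{h^T(y)>l\}}$ is measurable with respect to $\sigma(\omega(A) : A \subset W^{\um}_{y'}, A \in \cW)$. Since the three classes $W^{\um}_{y'}, W^{\tres}_{y'}, W^{\treze}_{y'}$ form a partition of $W$, Poisson independence on disjoint measurable sets gives independence of $\sigma(\omega|_{W^{\um}_{y'}})$ from $\mathcal{G}^{\tres}_{y'} \vee \mathcal{G}^{\treze}_{y'}$ (recall \eqref{e:sigmaalgebrastraj}). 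The event $\{h^T(y)>l\}$ also does not involve any uniform variable, so it is independent of $\mathcal{U}^{\tres}_{y'}$ as well. Thus it is independent of $\mathcal{F}_{y'}$, and
\[
\mathbb{P}\bigl(h^T(y) > l \,\big|\, \mathcal{F}_{y'}\bigr) = \mathbb{P}\bigl(h^T(y) > l\bigr) \quad \mathbb{P}\text{-a.s.}
\]

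Second, I would compare $h^T$ to $h$. Since $W^{\um}_{y'} \cap W^{\treze}_y \cap W^{\treze}_{y+(l,l)} \subseteq W^{\treze}_y \cap W^{\treze}_{y+(l,l)}$, the condition $\omega(W^{\um}_{y'} \cap W^{\treze}_y \cap W^{\treze}_{y+(l,l)}) = 0$ is implied by $\omega(W^{\treze}_y \cap W^{\treze}_{y+(l,l)}) = 0$, and so $h^T(y) \leq h(y)$ deterministically. Combining the two observations with Lemma~\ref{l:hxt_exp} yields
\[
\mathbb{P}\bigl(h^T(y) > l \,\big|\, \mathcal{F}_{y'}\bigr) = \mathbb{P}\bigl(h^T(y) > l\bigr) \leq \mathbb{P}\bigl(h(y) > l\bigr) \leq \useconstant{c:h_xt1}\, e^{-\useconstant{c:h_xt2} l},
\]
which is the desired bound. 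No step here looks technically hard; the only point requiring some care is verifying the measurability/independence claim, namely that restricting to trajectories in $W^{\um}_{y'}$ really does put $\mathbf{1}_{\{h^T(y)>l\}}$ outside the $\sigma$-algebra generated by all trajectories touching $\tres(y')$ (together with the uniforms there), and this follows directly from the partition of $W$ into $W^{\um}_{y'}, W^{\tres}_{y'}, W^{\treze}_{y'}$.
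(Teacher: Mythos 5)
Your proposal is correct and follows the same route as the paper's (one-line) proof: the paper cites precisely the two facts you verify — that $h^T(y)$ is independent of $\mathcal{F}_{y - (\lfloor (1-\bar v) T' \rfloor, 0)}$ and that $h^T(y) \le h(y)$ deterministically — and concludes by Lemma~\ref{l:hxt_exp}. Your expansion of the independence step (using the partition $W = W^{\um}_{y'} \cup W^{\tres}_{y'} \cup W^{\treze}_{y'}$ and the resulting independence of the restrictions of $\omega$, plus independence from the uniforms) and of the set inclusion giving $h^T(y) \le h(y)$ is exactly the intended justification.
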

\begin{proof}
The result follows from Lemma~\ref{l:hxt_exp} by noting that $h^T(y)$ is independent
of $\mathcal{F}_{y - (\lfloor (1-\bar{v}) T' \rfloor,0)}$ and smaller than $h(y)$.
\end{proof}

We say that $R_k$ is a \emph{good record time (g.r.t.)} when
\begin{align}
\label{e:good_record1}
& h^T(Y_{R_k}) \leq T'',\\
\label{e:good_record2}
& U_{Y_{R_k} + (l,l)} \leq p_\circ \wedge p_{\bullet}, \quad \forall\,l = 0, \dots, T''-1,\\
\label{e:good_record3}
& \omega(W^\um_{Y_{R_k}} \cap W^{\treze}_{Y_{R_k}+(T'', T'')}) = 0,\\
\label{e:good_record4}
& \{ Y_{R_{k+ T''}+1}, \ldots, Y_{R_{k+ T'}} \} \subset \um(Y_{R_{k+T''}}).
\end{align}
Note that, when \eqref{e:good_record2} occurs, $Y_{R_k}+(T'',T'') = Y_{R_{k+T''}}$
(see Fig.~\ref{f:Rk_good}).

\begin{figure}[htbp]
\centering
\begin{tikzpicture}[scale=.8]
\draw[dashed] (0,0) -- (3,9); 
\draw[dashed] (5,0) -- (8,9); 
\draw[dashed] (10,0) -- (13,9); 
\draw[<->] (0,0) -- (5,0); \node (Rk) at (2.5,0) [below]{$(1-\bar{v}) T'$}; 
\draw [fill] (0.1,0.3) circle [radius=0.05]; \node (Rk) at (0.1,0.3) [above left]{$Y_{R_{k-T'}}$};
\draw[thick] plot [smooth,tension=.5] coordinates{(0.1, 0.3) (1.2, 0.8) (2.0, 1.4) (2.6, 1.8)
(3.4, 2.0) (3.8, 2.4) (4.4, 2.8) (5.2, 3.0) (5.7, 3.2) (6.4, 4.2)};
\draw [fill] (6.4,4.2) circle [radius=0.05]; \node (Rk) at (6.4,4.2) [above left]{$Y_{R_{k}}$};
\draw[dotted] (6.4,4.2) -- (6.4,0);
\draw[-<] plot [smooth,tension=1] coordinates{(6.4,6.2) (6.9,5.6) (7.1,4.9)};
\node at (6.4,6.2) [above left]{\eqref{e:good_record2}};
\node (Rk) at (6.4,4.2) [below right]{$h^T(Y_{R_{k}}) \leq T''$, \eqref{e:good_record1}};
\draw[thick, dashed] (6.4,4.2) -- (8,5.4); 
\draw [fill] (8,5.4) circle [radius=0.05]; \node (Rk) at (8,5.4) [below right]{$Y_{R_{k+T''}}$};
\draw[dotted] (8,5.4) -- (8,4.4); \draw[dotted] (8,3.2) -- (8,0);
\draw[<->] (6.4,0) -- (8,0); \node (Rk) at (7.2,0) [below]{$T''$}; 
\draw (9.3,9) -- (8,5.4) -- (11.8,5.4);
\draw[thick] plot [smooth,tension=.5] coordinates{(8.0, 5.4) (8.4, 5.6) (9.1, 5.9) (9.7, 6.2)
(10.4, 6.4) (11.0, 6.8) (11.3, 7.2) (12.0, 7.6) (12.4, 7.9) (12.8, 8.4)};
\node at (11,6.8) [above left]{\eqref{e:good_record4}};
\draw [fill] (12.8,8.4) circle [radius=0.05]; \node (Rk) at (12.8,8.4) [below right]{$Y_{R_{k+T'}}$};
\end{tikzpicture}
\caption{\small Illustration of a good record time $R_k$. Note the validity of the conditions
\eqref{e:good_record1}, \eqref{e:good_record2} and \eqref{e:good_record4}.}
\label{f:Rk_good}
\end{figure}
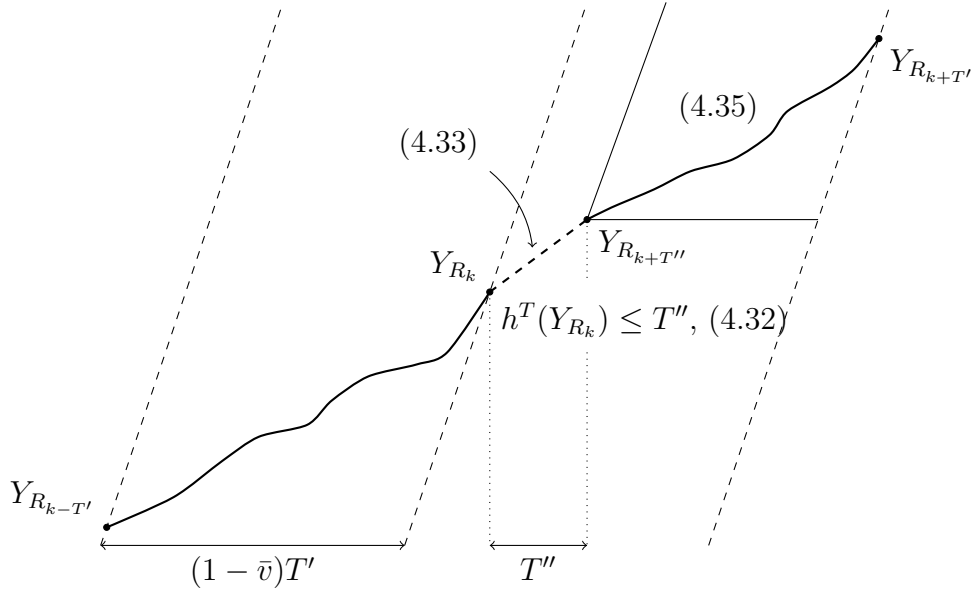

The idea is that, when $R_k$ is a good record time, $R_{k+T''}$ is likely to be an upper bound
for the regeneration time. In Proposition~\ref{p:manygrts} below we will show that when many
records are made, with high probability good record times occur. First, we need an
additional lemma.

For $y \in \Z^2$, denote by
\begin{equation}\label{e:defkappay}
\kappa(y) := \max \{ k \in \N \colon\, y \in \um_k\}
\end{equation}
the index of the last cone containing $y$. Note that $\kappa(Y_{R_k}) = k$.
Then define, for $t \in \N$, the space-time parallelogram
\begin{equation}
\label{defparallel}
\mathcal{P}_t(y) = \left( \um(y) \setminus \um_{\kappa(y)+t} \right) 
\cap \left( y + \{(x,n) \in \Z^2 \colon\, n \le t/\bar{v} \} \right)
\end{equation}
and its right boundary
\begin{equation}
\label{defrightbound}
\partial^+\mathcal{P}_t(y) = \{z \in \Z^2  \setminus \mathcal{P}_t(y) \colon\, z - (1,0)
\in \mathcal{P}_t(y) \}.
\end{equation}
We say that ``$Y^y \text{ exits } \mathcal{P}_t(y) \text{ through the right}$'' when the
first time $i$ at which $Y^y_i \notin \mathcal{P}_t(y)$ satisfies $Y^y_i \in \partial^+
\mathcal{P}_t(y)$. 
Note that, when $y=Y_{R_k}$, this implies $Y^y_i = Y_{R_{k+t}}$.

\newconstant{c:height}

\begin{lemma}
\label{l:no_top}
There exists a constant $\useconstant{c:height} > 0$ such that, for all $t \in \N$ large enough,
\begin{equation}
\label{e:no_top}
\mathbb{P} \left( Y^y \text{ exits } \mathcal{P}_t(y)
\text{ through the right} \;\middle| \; \mathcal{F}_y \right)
\ge \useconstant{c:height} \quad \mathbb{P}\text{-a.s.} \qquad \forall\,y \in \Z^2.
\end{equation}
\end{lemma}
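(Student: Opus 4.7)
My plan is to combine the ballisticity condition \eqref{e:LD} with an independence-based reduction of the conditional probability, completed by monotonicity.

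First, I would use translation invariance of $\P$ to reduce to $y = 0$. The event $E := \{Y^0 \text{ exits } \mathcal{P}_t(0) \text{ through the right}\}$ is measurable with respect to $\mathcal{G}^{\um}_0 \vee \mathcal{G}^{\treze}_0 \vee \mathcal{U}^{\um}_0$: inside $\mathcal{P}_t(0) \subset \um(0)$ the walk only consults trajectories that intersect $\um(0)$ (those in $W^{\um}_0 \cup W^{\treze}_0$) together with uniforms $U_z$ for $z \in \um(0)$. Since $\mathcal{G}^{\um}_0$ and $\mathcal{U}^{\um}_0$ are independent of $\mathcal{F}_0$, one obtains $\P(E \mid \mathcal{F}_0) = \P(E \mid \mathcal{G}^{\treze}_0)$ almost surely.

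Next, I would apply \eqref{e:LD} to $X^0$ with slack $L$ of order $t$: on the ballistic event $B := \{X^0_i \ge iv_\star - L \text{ for all } i \ge 0\}$, a direct geometric calculation using $\bar v = v_\star/3$ shows that at time $i_\ast \approx (3/v_\star - 1/2) t$, strictly less than the top time $t/\bar v = 3t/v_\star$, the lower bound $i_\ast v_\star - L$ matches the position $\bar v i_\ast + (1-\bar v)t$ of the right boundary of $\mathcal{P}_t(0)$; hence $B \subseteq E$ for appropriate $L = O(t)$, giving the unconditional bound $\P(E) \ge 1 - c^{-1}e^{-c\log^\gamma t} \ge 1/2$ for $t$ large.

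The hard part will be upgrading this to the a.s. conditional bound, for which I would split into two cases. If $v_\circ \le v_\bullet$, then by Remark~\ref{r:monotone}(ii) the walk $X^0$ is non-decreasing in $\omega$ and a direct check (comparing first exit times) shows that $E$ is itself increasing in $\omega$; therefore $\xi \mapsto \P(E \mid \mathcal{G}^{\treze}_0 = \xi)$ is non-decreasing and its infimum equals $\P(E \mid \omega(W^{\treze}_0) = 0)$. A Gaussian tail bound on excursion probabilities (in the spirit of Lemma~\ref{l:hxt_exp}) yields $\mu(W^{\treze}_0) < \infty$, so $\P(\omega(W^{\treze}_0) = 0) > 0$, and the previous estimate transfers via the inclusion $\P(E \cap \{\omega(W^{\treze}_0) = 0\}) \ge \P(E) - \P(\omega(W^{\treze}_0) > 0)$ to a positive lower bound on $\P(E \mid \omega(W^{\treze}_0) = 0)$. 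If instead $v_\circ > v_\bullet$, then $v_\bullet > 0$ forces both drifts to be positive, so the walk has pointwise drift $\ge v_\bullet$ in every environment, giving a quenched exponential ballistic bound that directly yields $\P(E \mid \mathcal{F}_0) \ge 1 - c^{-1}e^{-ct}$ a.s.; the main obstacle is the first case, where one must justify the monotonicity of $E$ in $\omega$ and ensure that the reduction to the empty $\treze$-configuration preserves a uniform positive lower bound independent of $t$.
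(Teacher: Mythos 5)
Your overall structure tracks the paper's proof closely: dispose of $v_\circ \ge v_\bullet$ by direct drift domination, reduce the conditional probability given $\mathcal{F}_0$ to a conditional on $\mathcal{G}^{\treze}_0$ via measurability/independence, use monotonicity (Remark~\ref{r:monotone}(ii)) to argue that the worst case is $\omega(W^{\treze}_0)=0$, and finally feed in the ballisticity condition \eqref{e:LD}. So the architecture is sound, and you correctly anticipate both the monotonicity mechanism and the role of \eqref{e:LD}.

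The genuine gap is the very last step, which you yourself flag as "the main obstacle." You propose the inclusion $\P(E \cap \{\omega(W^{\treze}_0)=0\}) \ge \P(E) - \P(\omega(W^{\treze}_0)>0)$ to extract a positive lower bound on $\P(E \mid \omega(W^{\treze}_0)=0)$. This does not work: $\mu(W^{\treze}_0)$ is finite but positive, so $\P(\omega(W^{\treze}_0)=0) = e^{-\rho\mu(W^{\treze}_0)}$ can be arbitrarily close to $0$ when $\rho$ is large, hence $\P(\omega(W^{\treze}_0)>0)$ can be arbitrarily close to $1$. Your ballistic estimate only gives $\P(E) \ge 1/2$ (or anything short of $1$ uniformly in $\rho$), so the right-hand side of your inclusion can be negative and gives nothing. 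The margin you build up via $L = O(t)$ improves the estimate in $t$, not in $\rho$, so it does not rescue this.

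The missing idea is an ellipticity-forcing step, and it is precisely how the paper closes this gap (cf.\ \eqref{e:tright3}--\eqref{e:tright4}). One first replaces $E$ by the weaker event $\{Y_n \notin \mathcal{H}_{v_\star,1}\ \forall n\}$, which is easier to intersect with $\{\omega(W^{\treze}_0)=0\}$. Then, on that intersection, one conditions on the first $L$ uniforms satisfying $U_{(i,i)} \le p_\circ$ for $i=0,\dots,L-1$, which costs a factor $p_\circ^L$ (independent of $t$) and forces the walk to $Y_L = (L,L)$; after this, it suffices that $Y^{(L,L)}$ never touches $\mathcal{H}_{v_\star,0}$, which by translation invariance has failure probability $\P(\exists\, n: Y_n \in \mathcal{H}_{v_\star, \lfloor L(1-v_\star)\rfloor})$. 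Now one chooses $L$ (depending only on $v_\star,\rho$, not $t$) so large that this failure probability is at most $\tfrac12 \P(\omega(W^{\treze}_0)=0)$, which \eqref{e:LD} allows since the bound there decays in $L$. Only after this ``sprinkling'' is the subtraction safe, yielding the $t$-uniform lower bound $\tfrac12 p_\circ^L \P(\omega(W^{\treze}_0)=0)$. Without this extra leverage in $L$ against the (possibly tiny) probability $\P(\omega(W^{\treze}_0)=0)$, your argument does not give a positive constant.

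Two smaller remarks: (i) you should verify that the sub-event you work with, staying out of $\mathcal{H}_{v_\star,1}$, actually implies ``exit through the right'' (the paper checks this for $t$ large using $\bar v = v_\star/3$); directly claiming $E$ itself is monotone is true but requires a separate argument about first-exit sides that the paper avoids. (ii) The reduction ``$\P(E\mid \mathcal{F}_y) = \P(E\mid\mathcal{G}^{\treze}_0)$'' needs to be stated as an equality a.s.\ on $\{\omega(W^{\treze}_y)=0\}$ plus a monotonicity argument for $\omega(W^{\treze}_y)>0$, as in the paper's appeal to the argument around \eqref{e:regmakessense2}; translation invariance alone does not dispose of the $\mathcal{F}_y$-dependence.
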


\begin{proof}
If $v_\circ \ge v_\bullet$, then the claim follows from simple random walk estimates, since
$0 < \bar v < v_\bullet$. Therefore we may assume that $v_\circ < v_\bullet$.

First note that, for fixed $y$ and $t$ large enough (e.g.\ $t > 3$),
\begin{align}
\label{e:tright0}
\mathbb{P} \left( Y^y \text{ exits } \mathcal{P}_t(y)
\text{ through the right} \;\middle| \; \mathcal{F}_y \right)
\ge \; & \mathbb{P} \left( Y^y_n -y \notin \mathcal{H}_{v_\star,1} \,\forall\, n \in \Z_+ \;\middle| \; \mathcal{F}_y \right)
\end{align}
where $\mathcal{H}_{v,L}$ is as in \eqref{e:cal_H}.
Reasoning as for \eqref{e:regmakessense2}, we see that the latter equals
\begin{align}
\label{e:tright1}
\mathbb{P} \left( Y_n \notin \mathcal{H}_{v_\star,1} \,\forall\, n \in \Z_+ \;\middle| \; \omega(W^{\treze}_0) =0 \right)
\;\; \mathbb{P}\text{-a.s.\ on the event } \{\omega(W^{\treze}_{y}) = 0\}.
\end{align}
By monotonicity, if $\omega(W^{\treze}_{y}) > 0$, then $Y^y$ can only be further to the right.
Hence
\begin{align}
\label{e:tright2}
& \mathbb{P} \left( Y^y \text{ exits } \mathcal{P}_t(y)
\text{ through the right} \;\middle| \; \mathcal{F}_y \right) \nonumber\\
& \ge \mathbb{P} \left( Y_n \notin \mathcal{H}_{v_\star,1} \,\forall\, n \in \Z_+ \;\middle| \; \omega(W^{\treze}_0) =0 \right) 
\;\; \mathbb{P}\text{-a.s.,}
\end{align}
so we only need to show that this last probability is strictly positive. 
To that end, fix $L > 1$ large enough such that
\begin{equation}
\label{e:tright3}
\mathbb{P} \left(\exists\,n \in {\mathbb{Z}_+}\colon\,Y_n \in
\mathcal{H}_{v_\star, \lfloor L(1-v_\star) \rfloor}\right)
\leq \tfrac12 \; \mathbb{P} \left( \omega(W^{\treze}_0) =0 \right),
\end{equation}
which is possible by \eqref{e:LD}. If $t$ is large enough (e.g.\ $t > 2$), then
\begin{align}
\label{e:tright4}
& \mathbb{P} \left( Y_n \notin \mathcal{H}_{v_\star,1} \,\forall\, n \in \Z_+,
\, \omega(W^{\treze}_0) =0 \right) \nonumber\\
& \ge \mathbb{P} \left( U_{(i,i)} \le p_\circ \; \forall \; i = 0, \ldots, L-1, \; Y^{(L,L)}_n
\notin \mathcal{H}_{v_\star, 0} \,\forall\, n \in \Z_+, \; \omega(W^{\treze}_0) =0 \right) \nonumber\\
& \ge p_\circ^L \left\{ \mathbb{P} \left( \omega(W^{\treze}_0) =0 \right)
- \mathbb{P} \left(\exists\,n \in {\mathbb{Z}_+}\colon\,Y_n \in
\mathcal{H}_{v_\star, \lfloor L(1-v_\star) \rfloor}\right) \right\} \nonumber\\
& \ge \tfrac12 p_\circ^L \mathbb{P} \left( \omega(W^{\treze}_0) =0 \right),
\end{align}
which concludes the proof.
\end{proof}

The following proposition is the main step in the proof of Theorem~\ref{t:tailregeneration}.
\newconstant{c:manygrts}

\begin{proposition}
\label{p:manygrts}
There exists a constant $\useconstant{c:manygrts} >0$ such that, for all $T>1$ large enough,
\begin{equation}
\mathbb{P}\left[\text{$R_k$ is not a g.r.t.\ for all $1\le k \le T$ }\right]
\leq e^{ -\useconstant{c:manygrts} T^{1/2}}.
\end{equation}
\end{proposition}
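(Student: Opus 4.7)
My plan is to select $N := \lfloor T/T' \rfloor$ ``trial'' record indices $k_j := (j-1) T' + 1$, $j = 1, \ldots, N$, and to show that each trial $G_j := \{ R_{k_j} \text{ is a g.r.t.} \}$ succeeds with conditional probability at least $q_T \ge c T^{-1/4}$ with respect to a suitable filtration $(\mathcal{H}_j)$ containing $G_1, \ldots, G_{j-1}$. Since $T' = \lfloor T^\epsilon \rfloor$ with $\epsilon \le 1/4$ and $N \sim T^{1-\epsilon}$, the tower property then yields
\[
\mathbb{P}\Big(\bigcap_{j=1}^N G_j^c\Big) = \mathbb{E}\Big[\prod_{j=1}^N \mathbb{P}(G_j^c \mid \mathcal{H}_j)\Big] \le (1-q_T)^N \le e^{-q_T N} \le e^{-c T^{3/4 - \epsilon}} \le e^{-c T^{1/2}},
\]
which bounds the probability that no $R_k$ with $1 \le k \le T$ is a g.r.t.\ (since that event forces every $G_j$ to fail).

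The single-trial bound $\mathbb{P}(G_j \mid \mathcal{H}_j) \ge q_T$ would be obtained by decomposing $G_j$ into the four conditions \eqref{e:good_record1}--\eqref{e:good_record4} and treating each in turn. Condition \eqref{e:good_record2} involves only the $T''$ uniforms $U_{Y_{R_{k_j}}+(l,l)}$ in the up-cone at $Y_{R_{k_j}}$; being independent of the past, it has conditional probability exactly $(p_\circ \wedge p_\bullet)^{T''} \ge T^{-1/4}$ by the choice of $\delta$. When it holds, the walker is forced $T''$ steps to the right and $Y_{R_{k_j + T''}} = Y_{R_{k_j}} + (T'', T'')$ deterministically. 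Condition \eqref{e:good_record1} is controlled by Lemma~\ref{l:locinfl}, which gives $\mathbb{P}(h^T(Y_{R_{k_j}}) > T'') \le \useconstant{c:h_xt1} T^{-\useconstant{c:h_xt2} \delta}$ (negligible next to $T^{-1/4}$) and whose relevant $\sigma$-algebra is independent of $\mathcal{F}_{Y_{R_{k_j}} - (\lfloor(1-\bar v) T', 0)}$. Condition \eqref{e:good_record3} is a Poisson zero-event; a computation analogous to the one inside the proof of Lemma~\ref{l:hxt_exp} shows that $\mu(W^\um_{Y_{R_{k_j}}} \cap W^\treze_{Y_{R_{k_j}} + (T'', T'')})$ is bounded by a constant uniformly in $T$ (the contributions from pairs of space-time points sum geometrically as in \eqref{e:Prho_expxx}), so the conditional probability is bounded below by $e^{-\rho c} > 0$. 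Finally, condition \eqref{e:good_record4} is implied by the event that the walker $Y^{Y_{R_{k_j + T''}}}$ exits the parallelogram $\mathcal{P}_{T' - T''}(Y_{R_{k_j + T''}})$ through its right face, which by Lemma~\ref{l:no_top} has conditional probability at least $\useconstant{c:height}$ given $\mathcal{F}_{Y_{R_{k_j + T''}}}$. Multiplying the four lower bounds yields $q_T \ge c T^{-1/4}$.

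The main obstacle is the bookkeeping of conditional independence. The four conditions live in essentially disjoint parts of the underlying independent $\sigma$-algebras --- $\mathcal{G}^\treze_{Y_{R_{k_j}}}$ for \eqref{e:good_record1}, $\mathcal{U}^\um_{Y_{R_{k_j}}}$ for \eqref{e:good_record2}, $\mathcal{G}^\um_{Y_{R_{k_j}}}$ for \eqref{e:good_record3}, and $\mathcal{G}^\um_{Y_{R_{k_j + T''}}} \vee \mathcal{U}^\um_{Y_{R_{k_j + T''}}}$ for \eqref{e:good_record4} --- but the base point $Y_{R_{k_j}}$ is random, so one must condition on $\{Y_{R_{k_j}} = y\}$ and then invoke the joint independence of $\mathcal{G}^\um, \mathcal{G}^\tres, \mathcal{G}^\treze$ at the deterministic $y$. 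Approximate independence between successive trials follows from the geometric separation $X_{R_{k_{j+1}}} - X_{R_{k_j}} \ge (1 - \bar v) T' - O(1)$, which matches the lookback length in $h^T$ and is precisely the reason the local influence field of Lemma~\ref{l:locinfl} must be used in \eqref{e:good_record1} rather than the global $h$ of Lemma~\ref{l:hxt_exp}.
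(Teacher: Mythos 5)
Your proposal follows the same overall strategy as the paper's proof: obtain a single-trial lower bound of order $T^{-1/4}$ for the conditional probability that a given record time is good (by lower-bounding each of the four conditions \eqref{e:good_record1}--\eqref{e:good_record4}), then chain the conditional bounds along a spaced-out sequence of trial indices to get the exponential estimate. The final exponent $T^{3/4 - \epsilon} \ge T^{1/2}$ is computed correctly. However, there are two points where your sketch would not go through as written.

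First, your spacing of trial indices is too tight. You take $k_j = (j-1)T' + 1$, so $k_j - k_{j-1} = T'$, and you want $\mathcal{H}_j$ to contain $G_1,\dots,G_{j-1}$ while also giving the single-trial bound $\mathbb{P}(G_j\mid\mathcal{H}_j)\ge q_T$. But $\{R_k\text{ g.r.t.}\}\in\mathcal{F}_{k+T'}$ (because condition \eqref{e:good_record4} reaches forward to $R_{k+T'}$), while the paper's single-trial estimate \eqref{e:saw_pemba} conditions on $\mathcal{F}_{k-T'}$ (because condition \eqref{e:good_record1} reaches back a horizontal distance $\lfloor(1-\bar v)T'\rfloor$). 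To nest these one needs $k_{j-1}+T'\le k_j - T'$, i.e.\ spacing at least $2T'$; this is why the paper sums along the indices $(2k+1)T'$, $k\le T/(3T')$. The factor $2$ is harmless for the final bound, but with spacing $T'$ the tower-property identity $\mathbb{P}(\cap_j G_j^c) = \mathbb{E}\bigl[\prod_j\mathbb{P}(G_j^c\mid\mathcal{H}_j)\bigr]$ does not hold for the natural filtration $\mathcal{H}_j = \mathcal{F}_{k_j - T'}$.

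Second, your treatment of condition \eqref{e:good_record1} skips the central technical step. You invoke Lemma~\ref{l:locinfl} as if it directly yielded $\mathbb{P}(h^T(Y_{R_{k_j}})>T'')\le c_1 T^{-c_2\delta}$, but Lemma~\ref{l:locinfl} is stated for a \emph{deterministic} $y$, conditioned on $\mathcal{F}_{y-(\lfloor(1-\bar v)T'\rfloor,0)}$. Since $Y_{R_k}$ is random and \emph{not} $\mathcal{F}_{k-T'}$-measurable, the paper (in \eqref{e:manygrts1}) decomposes on whether the walk started at $Y_{R_{k-T'}}$ exits the parallelogram $\mathcal{P}_{T'}(Y_{R_{k-T'}})$ through its right boundary: on that event $Y_{R_k}$ ranges over the $O(T')$ points of $\partial^+\mathcal{P}_{T'}$ and one applies Lemma~\ref{l:locinfl} at each, paying a union-bound factor $T'$ (absorbed precisely by the choice $\epsilon=\tfrac14(\useconstant{c:h_xt2}\delta\wedge1)$ so that $T'e^{-\useconstant{c:h_xt2}T''}\to0$); on the complementary event one pays $1-\useconstant{c:height}$ via Lemma~\ref{l:no_top}. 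Thus the conditional success probability for \eqref{e:good_record1} is $\ge\useconstant{c:height}/2$ rather than $1-o(1)$, and Lemma~\ref{l:no_top} is used twice (for conditions \eqref{e:good_record1} and \eqref{e:good_record4}), not once. Your sketch hints at ``conditioning on $Y_{R_{k_j}}=y$ and invoking joint independence,'' but without the parallelogram-exit dichotomy and the union bound over $\partial^+\mathcal{P}_{T'}$, the single-trial estimate is not established.

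Both issues are local and repairable; your overall decomposition and the use of conditions \eqref{e:good_record2}, \eqref{e:good_record3}, \eqref{e:good_record4} match the paper's argument.
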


\begin{proof}
First we claim that there exists a $c>0$ such that, for any $k \ge T'$,
\begin{equation}
\label{e:saw_pemba}
\mathbb{P} \left[ R_k \text{ is a g.r.t.} \big| \mathcal{F}_{k-T'} \right]
\geq c T^{\delta \log(p_\circ \wedge p_{\bullet})}  \text{ a.s.}
\end{equation}
To prove \eqref{e:saw_pemba}, we will find $c>0$ such that
\begin{align}
\label{e:good_cond1}
& \mathbb{P} \big[ \text{\eqref{e:good_record1}} \; \big| \; \mathcal{F}_{k-T'} \big] \geq c
 & \text{ a.s.,}\\
\label{e:good_cond2}
& \mathbb{P} \big[ \text{\eqref{e:good_record2}} \; \big| \; \mathcal{F}_k \big]
\geq T^{\delta \log(p_\circ \wedge p_{\bullet})}  & \text{ a.s.,}\\
\label{e:good_cond3}
& \mathbb{P} \big[ \text{\eqref{e:good_record3}} \; \big| \; \text{\eqref{e:good_record2}},
\mathcal{F}_k \big] \geq c  & \text{ a.s.,}\\
\label{e:good_cond4}
& \mathbb{P} \big[ \text{\eqref{e:good_record4}} \; \big| \; \mathcal{F}_{k+T''} \big] \geq c
 & \text{ a.s.\ }
\end{align}

\eqref{e:good_cond1}: For $B \in \mathcal{F}_{k-T'}$, write
\begin{align}
\label{e:manygrts1}
& \mathbb{P} \left(h^T(Y_{R_k}) > T'', B \right)
= \sum_{y_1, y_2 \in \Z^2} \mathbb{P} \left(h^T(y_2) > T'', Y_{R_k}
= y_2, Y_{R_{k-T'}}=y_1, B_{y_1} \right) \nonumber\\
& \quad \le \sum_{y_1 \in \Z^2} \sum_{y_2 \in \partial^+\mathcal{P}_{T'}(y_1)}
\mathbb{P} \left(h^T(y_2) > T'', Y_{R_{k-T'}}=y_1, B_{y_1} \right) \nonumber\\
& \quad \;\;\; + \sum_{y_1 \in \Z^2} \mathbb{P} \left( Y^{y_1} \text{ does not exit }
\mathcal{P}_{T'}(y_1) \text{ through the right}, Y_{R_{k-T'}}=y_1, B_{y_1} \right) \nonumber\\
& \quad \le \left\{ \useconstant{c:h_xt1} T' e^{-\useconstant{c:h_xt2} T''}
+ 1-\useconstant{c:height} \right\}  \mathbb{P} \left( B \right)
\le \left\{  \useconstant{c:h_xt1} e^{\useconstant{c:h_xt2}} T^{-\frac34 \delta
\useconstant{c:h_xt2}} + 1-\useconstant{c:height} \right\}  \mathbb{P} \left( B \right),
\end{align}
where for the second line we use Lemmas~\ref{l:locinfl}--\ref{l:no_top} and
$|\partial^+ \mathcal{P}_t(y)| \le t/\bar{v}$, while for the third we use the definition of $\epsilon$.
Thus, for $T$ large enough, \eqref{e:good_cond1} is satisfied with $c = \useconstant{c:height}/2$.

\eqref{e:good_cond2}: This follows from the fact that $(U_{Y_{R_k}+(l,l)})_{l \in \N_0}$ is
independent of $\mathcal{F}_k$.

\eqref{e:good_cond3}: We may ignore the conditioning on \eqref{e:good_record2} since
this event is independent of the others. For $B \in \mathcal{F}_k$, write
\begin{align}
\label{e:manygrts2}
& \mathbb{P} \left(\omega(W^{\um}_{Y_{R_k}}
\cap W^{\treze}_{Y_{R_k}+(T'', T'')} ) =0, B\right) \nonumber\\
& \quad = \sum_{y \in \Z^2} \mathbb{P} \left(\omega(W^{\um}_{y}
\cap W^{\treze}_{y+(T'', T'')} ) =0, Y_{R_k} = y, B_y\right) \nonumber\\
&\quad = \sum_{y \in \Z^2} \mathbb{P} \left(\omega(W^{\um}_{y}
\cap W^{\treze}_{y+(T'', T'')} ) =0 \right) \mathbb{P}\left( Y_{R_k} = y, B_y\right) \nonumber\\
& \quad \ge \mathbb{P}\left(\omega(W^{\treze}_0) = 0 \right) \mathbb{P} \left( B \right),
\end{align}
where the second equality uses the independence of $\mathcal{G}^{\um}_y$ and $\mathcal{F}_y$,
and the last step uses the monotonicity and translation invariance of $\omega$.

\eqref{e:good_cond4}: For $B \in \mathcal{F}_{k+T''}$, write
\begin{align}
\label{e:manygrts3}
& \mathbb{P} \left( \{ Y_{R_{k+T''}+1}, \ldots, Y_{R_{k+T'}}\}
\subset \um(Y_{R_{k+T''}}), B \right) \nonumber\\
& \quad \ge \mathbb{P} \left( Y^{Y_{R_{k+T''}}} \text{ exits }
\mathcal{P}_{T'}(Y_{R_{k+T''}}) \text{ through the right}, B \right) \nonumber\\
& \quad = \sum_{y \in \Z^2} \mathbb{P} \left( Y^{y} \text{ exits }
\mathcal{P}_{T'}(y) \text{ through the right}, Y_{R_{k+T''}}=y, B_y \right) \nonumber\\
& \quad \ge \useconstant{c:height} \mathbb{P} \left( B \right)
\end{align}
by Lemma~\ref{l:no_top}.

Thus, \eqref{e:saw_pemba} is verified. Since $\{R_k \text{ is a g.r.t.}\} \in \mathcal{F}_{k+T'}$,
we obtain, for $T$ large enough,
\begin{align}
\label{e:manygrts4}
\mathbb{P} \left( R_k \text{ is not a g.r.t.\ for any } k \le T \right)
& \le \mathbb{P} \left( R_{(2k+1)T'} \text{ is not a g.r.t.\ for any } k \le T/3T' \right) \nonumber\\
& \le \exp \left\{ -\frac{c}{4} \frac{T^{1+{\delta \log(p_\circ \wedge p_\bullet)}}}{T'} \right\} \nonumber\\
& \le \exp \left\{-\frac{c}{4}T^{\frac12} \right\}
\end{align}
by our choice of $\epsilon$ and $\delta$.
\end{proof}

We are now ready to finish the proof of Theorem~\ref{t:tailregeneration}.

\begin{proof}[Proof of Theorem~{\rm \ref{t:tailregeneration}}]
Since $\mathbb{P}^{\um}(\cdot) = \mathbb{P}(\cdot | \Gamma_0)$ with
$\mathbb{P}(\Gamma_0)>0$, it is enough to prove the statement under
$\mathbb{P}$. To that end, let
\begin{equation}
\label{prtailreg1}
\begin{array}{lcl}
E_1 & = & \{\exists \; y \in [-T,T] \times [0,T] \cap \Z^2 \colon\, h(y) \ge T'/2\},\\
E_2 & = & \{\exists \; y \in [-T,T] \times [0,T] \cap \Z^2 \colon\, Y^y \text{ touches } y
+ \mathcal{H}_{v_\star, \lfloor T'/2 \rfloor}\}.
\end{array}
\end{equation}
Then, by Lemma~\ref{l:hxt_exp}, \eqref{e:LD} and a union bound, there exists
a $c>0$ such that
\begin{equation}\label{prtailreg2}
\mathbb{P} \left( E_1 \cup E_2 \right) \le c^{-1} e^{-c  \log^\gamma T} \qquad \forall\,T>1.
\end{equation}
Next we argue that, for $T$ large enough, if $R_k$ is a g.r.t.\ with $k \le {\bar v}T$ and
both $E_1$ and $E_2$ do not occur, then $\tau \le R_{k+T''} \le T$. Indeed, if $T'' < T'
< {\bar v}T/2$, then on $E_2^c$ we have $R_{\lfloor {\bar v}T \rfloor +T'} \le T$, since
otherwise $Y$ touches $\mathcal{H}_{v_\star, \lfloor T'/2 \rfloor}$. 
Thus, all we need to verify is that
\begin{equation}
\label{prtailreg3}
\omega(W^\treze_{Y_{R_{k+T''}}})=0
\end{equation}
and that
\begin{equation}
\label{prtailreg4}
A^{Y_{R_{k+T''}}} \text{ occurs }
\end{equation}
under the conditions stated.

To verify \eqref{prtailreg4}, note that $Y_{R_{k+T'}} \in [-T,T] \times[0,T] \cap \Z^2$
on $E_1^c \cap E_2^c$. 
Therefore
\begin{equation}\label{prtailreg4.5}
Y_{R_{k+T'}+l} \in \um(Y_{R_{k+T''}}) \; \forall \; l \in \Z_+
\end{equation}
on $E_2^c$, and \eqref{prtailreg4} follows from \eqref{prtailreg4.5} and \eqref{e:good_record4}.

To verify \eqref{prtailreg3}, first note that, by \eqref{e:good_record1} and \eqref{e:good_record3},
we only need to check that
\begin{equation}
\label{prtailreg5}
\omega( W^{\treze}_{Y_{R_{k+T''}}} \cap  W^\treze_{Y_{R_k}} \cap W^{\treze}_{Y_{R_k} - (\lfloor (1-\bar{v}) T' \rfloor,0)} )
\le \omega(W^\treze_{Y_{R_k}} \cap W^{\treze}_{Y_{R_k} - ( \lfloor (1-\bar{v}) T' \rfloor,0)} ) = 0.
\end{equation}
To that end, define
\begin{equation}
\label{prtailreg6}
\mathcal{L} = \tres(Y_{R_k}-(\lfloor (1-\bar{v}) T' \rfloor, 0)) \cap \Z \times [0,\infty).
\end{equation}
Note that $\partial^+\mathcal{L} \subset [-T,T] \times [0,T]$ on $E_1^c \cap E_2^c$. 
Furthermore, since the paths in $W$ take nearest-neighbour steps,
we have
\begin{equation}
\{ \omega(W^\treze_{Y_{R_k}} \cap W^{\treze}_{Y_{R_k} - ( \lfloor (1-\bar{v}) T' \rfloor,0)} ) >0 \}
\subset \{\exists \; y \in \partial^+ \mathcal{L} \colon\, h(y) \ge T'/2\}
\end{equation}
and the latter set is empty on $E_1^c \cap E_2^c$. Thus, \eqref{prtailreg5} holds.

In conclusion, for $T$ large enough we have
\begin{align}
\label{prtailreg8}
\mathbb{P} \left( \tau > T \right)
& \le \mathbb{P}(E_1 \cup E_2) + \mathbb{P} \left( R_k \text{ is not a g.r.t.\ }
\forall \; k \le {\bar v}T  \right) \nonumber\\
& \le c^{-1}e^{-c(\log T)^\gamma} + e^{-\useconstant{c:manygrts} ({\bar v} T)^{1/2}},
\end{align}
which concludes the proof.
\end{proof}


\subsection{Proof of Theorem~\ref{thm:SLLN+CLT+LD}}
\label{ss:limittheorems}

We begin by making the following observation.

\begin{theorem}
\label{t:regtimes}
On an enlarged probability space there exists a sequence $(\tau_n)_{n \in \N}$ of random times
with $\tau_1 = \tau$ such that, under $\P$ and with $S_n=\sum_{i=1}^n\tau_i$,
\begin{equation}\label{iidsequence}
\Big(\tau_{n+1},\left(X_{S_n+s} - X_{S_n}\right)_{0 \le s \le \tau_{n+1}}\Big)_{n \in \N}
\end{equation}
is an i.i.d.\ sequence, independent of $(\tau,(X_s)_{0 \le s \le \tau})$, with each of its terms
distributed as $(\tau,(X_s)_{0 \le s \le \tau})$ under $\P^{\um}$.
\end{theorem}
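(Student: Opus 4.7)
The plan is to iterate Theorem~\ref{t:regeneration}: I set $\tau_1=\tau$ and, for $n\ge 1$, define $\tau_{n+1}$ as the first regeneration time of the process viewed from the space-time point $Y_{S_n}$, i.e.\ the output of the recipe \eqref{e:records}--\eqref{e:regtime} applied with $Y_{S_n}$ as origin. By Theorem~\ref{t:tailregeneration}, which applies under both $\mathbb{P}$ and $\mathbb{P}^{\um}$, each $\tau_n$ is finite almost surely, so the recursion is well-defined. Since the regeneration time is a measurable functional of the walker path together with the environment, $\tau_n$ can be realized on the original probability space; the ``enlarged'' probability space in the statement merely accommodates the formal extension needed to carry the shifted environment and uniforms along at each step.

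The distributional identity will follow by induction. For the base case I would condition on $\mathcal{F}_\tau$ (which contains $(\tau_1,(X_s)_{0\le s\le\tau_1})$): Theorem~\ref{t:regeneration} identifies the conditional law of the shifted walker $(X_{\tau_1+s}-X_{\tau_1})_{s\in\Z_+}$ as that of $X$ under $\mathbb{P}^{\um}$, independently of the past. Since $\tau_2$ is a deterministic functional of this shifted process, the pair $(\tau_2,(X_{\tau_1+s}-X_{\tau_1})_{0\le s\le\tau_2})$ has the joint law of $(\tau,(X_s)_{0\le s\le\tau})$ under $\mathbb{P}^{\um}$, independent of the past. For the inductive step I would repeat the argument using the second assertion of Theorem~\ref{t:regeneration} (the one under $\mathbb{P}^{\um}$): conditionally on the past up through time $S_n$, the shifted process again has law $\mathbb{P}^{\um}$, so $\tau_{n+1}$ together with the corresponding path increment is independent of all previous increments and shares the same $\mathbb{P}^{\um}$-law. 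The tower property then delivers the full i.i.d.\ statement.

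The hard part will be that Theorem~\ref{t:regeneration} asserts an identity in law only for the shifted walker path, whereas $\tau_{n+1}$ is a functional of the environment too, through the conditions $\omega(W^{\treze}_{Y_{R_k}})=0$ and $A^{Y_{R_k}}$ in the definition of the regeneration record index. To close the induction I will need the slightly stronger statement that, conditionally on the past, the joint law of the shifted environment and uniforms restricted to the forward cone (i.e.\ the $\sigma$-algebras $\mathcal{G}^{\um}_{Y_{S_n}}\vee\mathcal{U}^{\um}_{Y_{S_n}}$) together with the shifted walker coincides with the unshifted joint law under $\mathbb{P}^{\um}$. This upgrade is essentially contained in the proof of Theorem~\ref{t:regeneration}: equation \eqref{e:regmakessense2} exploits the facts that on $\{\omega(W^{\treze}_y)=0\}$ the relevant functionals lie in $\mathcal{G}^{\um}_y\vee\mathcal{U}^{\um}_y$ and that this $\sigma$-algebra is independent of $\mathcal{F}_y$ by construction \eqref{e:sigmaalgebrastraj}--\eqref{e:sigmaalgebraFxt}; rerunning that argument with a general bounded functional of the forward environment and uniforms, in place of the walker alone, yields the required joint version and justifies the iteration.
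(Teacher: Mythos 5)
Your proposal is correct and follows the same route as the paper, which simply states that the theorem ``follows from Theorem~\ref{t:regeneration} and the fact that $\tau<\infty$ a.s., exactly as in [AvdSaVo, Proof of Theorem~3.8]''; you have unpacked the standard iteration that that reference carries out. You also correctly pinpoint the one non-trivial point — that $\tau_{n+1}$ depends on the shifted \emph{environment} through the conditions $\omega(W^{\tres}_{Y_{R_k}})=0$ and $A^{Y_{R_k}}$, so the stated form of Theorem~\ref{t:regeneration} (which concerns only the shifted walker path) must be upgraded to a joint statement about $\mathcal{G}^{\um}_{Y_\tau}\vee\mathcal{U}^{\um}_{Y_\tau}$ — and your resolution (rerun \eqref{e:regmakessense2} with a general bounded $\mathcal{G}^{\um}_y\vee\mathcal{U}^{\um}_y$-measurable functional in place of $f((Y^y_i)_{i\ge0})\mathbf{1}_{A^y}$, using that this $\sigma$-algebra is independent of $\mathcal{F}_y$ and translation-covariant) is exactly what makes the iteration go through.
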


\begin{proof}
The claim follows from Theorem~\ref{t:regeneration} and the fact that $\tau < \infty$ a.s., exactly
as in Avena, dos Santos and V\"ollering~\cite[Proof of Theorem 3.8]{AvdSaVo}.
\end{proof}

We are now ready to prove Theorem~\ref{thm:SLLN+CLT+LD}.

\begin{proof}[Proof of Theorem~\ref{thm:SLLN+CLT+LD}]
We start with (c).
Let
\be{defspeed}
\iota:= \E^\um[\tau] \quad \text{ and } \quad v := \frac{\E^\um[X_{\tau}]}{\E^\um[\tau]}.
\ee
By Theorems~\ref{t:tailregeneration} and~\ref{t:regtimes} and Lemma~\ref{l:convrate}, for any
$\varepsilon > 0$ we have
\begin{equation}
\label{prLT1}
\P \left( \exists \; i \ge n \colon \left| X_{S_i}-S_i v \right| \vee \left|S_i - i\iota \right| > \varepsilon i \right)
\le c^{-1} e^{ -c \log^\gamma n}
\end{equation}
for some $c>0$.

Next define, for $t \ge 0$, $k_t$ as the random integer such that
\be{SubSequence}
S_{k_t}\leq t < S_{k_t+1}.
\ee
Since $S_n > t$ if and only if $k_t < n$, for any $\varepsilon > 0$ we have
\begin{equation}\label{prLT2}
\P \left( \exists \; t \ge n \colon \left| k_t - t/\iota \right| > t \varepsilon \right)
\le \P \left( \exists \; i \ge \delta' n \colon |S_i - i \iota| > \varepsilon' i \right)
\end{equation}
for some $\delta', \epsilon' > 0$ and large enough $n$.
On the other hand, since $X$ is Lipschitz we have
\begin{equation}\label{prLT3}
|X_t - t v| \le |X_{S_{k_t}} - S_{k_t} v| + (1+|v|) \left\{ |S_{k_t}-k_t \iota| + |k_t \iota - t| \right\},
\end{equation}
and therefore for any $\varepsilon > 0$
\begin{align}\label{prLT4}
\P \left( \exists \; t \ge n \colon |X_t - t v| > \varepsilon t \right)
& \le \P \left( \exists \; t \ge n \colon \left| k_t - t/\iota \right| > t \varepsilon' \right) \nonumber\\
& \;\;\; + \P \left( \exists \; i \ge \delta' n \colon \left| X_{S_i}-S_i v \right|
\vee \left|S_i - i\iota \right| > \varepsilon i \right)
\end{align}
for some $\delta', \varepsilon' > 0$. Combining \eqref{prLT1}--\eqref{prLT4}, we get (c), and
(a) follows by the Borel-Cantelli lemma.

To prove (b), let $\hat{\sigma}^2$ be the variance of $X_{\tau}-\tau v$ under $\P^\um$, which
is finite because of \eqref{e:tailregeneration} and strictly positive because $X_{\tau}-\tau v$ is not
a.s.\ constant. For the process $(Y_k)_{k\in\N}$ defined by $Y_k = X_{S_k}-S_k v$, a functional
central limit theorem with variance $\hat{\sigma}^2$ holds because, by Theorems \ref{t:tailregeneration}
and \ref{t:regtimes}, the assumptions of the Donsker-Prohorov invariance principle are satisfied.

Now consider the random time change $\varphi_n(t) = k_{nt}/n$. We claim that
\be{eq:convphin}
\lim_{n\to \infty} \sup_{t \in [0,L]}\left|\varphi_n(t) - \frac{t}{\E^\um\left[\tau\right]} \right|
= 0 \quad \mathbb{P} \text{-a.s.} \quad \forall \, L > 0.
\ee
To prove \eqref{eq:convphin}, fix $\delta > \epsilon > 0$ and recall \eqref{defspeed}.
Reasoning as for \eqref{prLT2}, we see that
\begin{equation}
\label{eq:relationkandS}
\P \left( \exists \; t \in [\delta c,L] \colon \left| \varphi_n(t) - t/\iota \right| > \epsilon \right)
\le \P \left( \exists \; k \ge \delta'n \colon |S_k - k \iota| > \epsilon' k \right)
\end{equation}
for some $\delta', \epsilon' > 0$ and large enough $n$. By \eqref{prLT1}, the right-hand
side of \eqref{eq:relationkandS} is summable, and hence
\begin{equation}
\label{eq:stepconvphin}
\limsup_{n \to \infty} \sup_{t \in [0,L]}\left|\varphi_n(t) -t/\iota \right| \le 2 \delta \; \text{ a.s.}
\end{equation}
Since $\delta>0$ is arbitrary, \eqref{eq:convphin} follows. In particular, $\varphi_n$ converges
a.s.\ in the Skorohod topology to the linear function $t \mapsto t/\E^\um\left[\tau\right]$.

Define $Y^{(n)}_t = n^{-1/2} Y_{\lfloor nt \rfloor}$. With a time-change argument (see e.g.\
Billingsley \cite[Eqs.\ (17.7)--(17.9) and Theorem 4.4]{Bi}), we see that
 $(Y^{(n)}_{\varphi_n(t)})_{t \ge 0}$ converges weakly to a Brownian motion with variance
\begin{equation}\label{e:defsigma}
\sigma^2 = \frac{\E^\um \left[\left(X_\tau - \tau v \right)^2 \right]}{\E^\um[\tau]}.
\end{equation}
To extend this to $X$, note that, for any $T>0$,
\begin{align}
\label{supconv}
\sup_{0 \le t \le T} \left|\frac{X_{\lfloor nt \rfloor}
- \lfloor nt \rfloor v}{\sqrt{n}} - Y^{(n)}_{\varphi_n(t)} \right|
& \le \frac{1}{\sqrt{n}} \sup_{0 \le t \le T} \left( \left| X_{\lfloor nt \rfloor}
- X_{S_{k_{nt}}} \right| + v \left| S_{k_{nt}} - \lfloor nt \rfloor \right| \right) \nonumber\\
& \le \frac{|v| + 1}{\sqrt{n}}  \sup_{1\le k \le nT + 1} \left| \tau_k \right|,
\end{align}
which tends to $0$ a.s.\ as $n\to \infty$ by Theorems~\ref{t:tailregeneration} and \ref{t:regtimes}.
\end{proof}

\begin{remark}\label{r:continuity}
One may check that the statement of Theorem~\ref{t:tailregeneration} 
is uniform over compact intervals of the parameters $v_\circ$, $v_\bullet$, $\rho$.
Using this and the formulas \eqref{defspeed} and \eqref{e:defsigma},
it is possible to show continuity of $v$ and $\sigma$ in these parameters 
by approximating certain relevant observables of the system up to the regeneration time
by observables supported in finite space-time boxes. See e.g.\ Section 6.4 of \cite{dHodSa}.
\end{remark}

\subsection{Extensions}
\label{ss:regextensions}

As mentioned in Section~\ref{s:intro}, finding a regeneration structure is usually a delicate 
matter, as often one needs to rely on precise features of the model at hand. Approximate 
renewal schemes are more general, but do not usually give as much information as full 
regeneration.

Let us mention other examples of dynamic random environments where a renewal strategy 
can be found. For the simple symmetric exclusion process, such a renewal structure was 
developed in \cite{AvdSaVo}. There, the tail of the regeneration time is controlled by imposing 
a non-nestling condition on the random walk drifts. Using the techniques of this section, it 
would be possible to extend these results (i.e., obtain Theorem~\ref{thm:main}) to the nestling 
situation, provided one manages to prove the analogue of Theorem~\ref{thm:LD} for the 
exclusion process.

Another example where a regeneration strategy is useful is the independent renewal chain 
discussed in Section~\ref{ss:commentsreg}. Indeed, a regeneration time can be obtained 
as follows. Recall that, for large enough $\rho$, we obtain the ballisticity condition \eqref{e:LD} 
for some $v_\star >0$. Retaining the notation of Section~\ref{ss:morenotation} for $\bar{v}$, 
$\um(y)$, $R_k$ and $A^y$, we define
\begin{equation}
\mathcal{I} = \inf \{ k \in \N \colon\, A^{Y_{R_k}} \text{ occurs}\} \;\; 
\text{ and } \;\; \tau := R_{\mathcal{I}}.
\end{equation}
We may then verify that $\tau$ satisfies analogous properties as stated in 
Theorems~\ref{t:regeneration} and \ref{t:tailregeneration}. Hence, by the exact same arguments 
as in Section~\ref{ss:limittheorems}, Theorem~\ref{thm:main} holds also in this case.


The remainder of this paper consists of five appendices. All we have used so far is
Theorem~\ref{t:decouple} in Appendix~\ref{s:decouple} (recall Section~\ref{ss:space_time}),
which is a decoupling inequality, Lemma~\ref{l:ineq} in Appendix~\ref{s:inequalities}
(recall Section~\ref{ss:ldest}), which is a tail estimate, and Lemma~\ref{l:convrate}
in Appendix~\ref{s:conv_rate} (recall Section~\ref{ss:limittheorems}), which
is an estimate for independent random variables satisfying a certain tail assumption.
Appendices~\ref{s:simwithPoisson}--\ref{s:simulation}
are preparations for Appendix~\ref{s:decouple}.


\appendix


\section{Simulation with Poisson processes}
\label{s:simwithPoisson}

In this section we recall some results from Popov and Teixeira~\cite{PoTe} about how to
simulate random processes with the help of Poisson processes. Corollary~\ref{c:couplesystem}
will be used in Section~\ref{s:simulation} to prove a mixing-type result for a collection of
independent random walks (Lemma~\ref{l:couple} below).

Let $(\Sigma, \mathcal{B}, \mu)$ be a measure space, with $\Sigma$ a locally compact
Polish metric space, $\mathcal{B}$ the Borel $\sigma$-algebra on $\Sigma$, and
$\mu$ a Radon measure, i.e., every compact subset of $\Sigma$ has finite $\mu$-measure.
The set-up is standard for the construction of a Poisson point process on $\Sigma$.
To that end, consider the space of Radon point measures on $\Sigma \times \mathbb{R}_+$,
\begin{equation}
\label{e:M}
M = \left\{\m = \sum_{i \in \N} \delta_{(z_i, v_i)}\colon\, z_i \in \Sigma, v_i \in \mathbb{R}_+,
\m(K) < \infty\,\forall\, K \subseteq \Sigma \times \mathbb{R}_+ \text{ compact} \right\}.
\end{equation}
We can canonically construct a Poisson point process $\m$ on the measure space
$(M, \mathcal{M}, \mathbb{Q})$ with intensity $\mu \otimes \d v$, where $\d v$ is the
Lebesgue measure on $\mathbb{R}_+$. (For more details on this construction, see
e.g.\ Resnick~\cite[Proposition~3.6]{Re}.)

Proposition~\ref{p:simulate} below provides us with a way to simulate a random element of
$\Sigma$ by using the Poisson point process $\m$. Although the result is intuitive, we include
its proof for the sake of completeness.

\begin{proposition}
\label{p:simulate}
Let $g\colon\,\Sigma \to \mathbb{R}_+$ be a measurable function with $\int_\Sigma g(z) \mu(\d z)
= 1$. For $\m = \sum_{i \in \N} \delta_{(z_i, v_i)} \in M$, define
\begin{equation}
\xi = \inf \{ t \geq 0\colon\, \exists\,i \in \N \text{ such that } t g(z_i) \geq v_i\}
\end{equation}
(see Fig.~{\rm \ref{f:xi}}). Then, under the law $\mathbb{Q}$ of the Poisson point process $\m$,
\begin{enumerate}
\item[{\rm (1)}]
\label{e:io}
A.s.\ there exists a single $\hat{\imath} \in \N$ such that $\xi g(z_{\hat{\imath}}) = v_{\hat{\imath}}$.
\item[{\rm (2)}]
\label{e:xiio}
$(z_{\hat{\imath}}, \xi)$ has distribution $g(z) \mu(dz) \otimes \Exp(1)$.
\item[{\rm (3)}]
\label{e:mprime}
Let $\m' = \sum_{i \neq \hat{\imath}} \delta_{(z_i,v_i -\xi g(z_i))}$. Then $m'$ has the same
distribution as $\m$ and is independent of $(\xi, \hat{\imath})$.
\end{enumerate}
\end{proposition}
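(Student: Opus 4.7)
The guiding idea is to push the Poisson point process forward along the measurable map
\[
\Phi \colon \Sigma \times \mathbb{R}_+ \to \Sigma \times \mathbb{R}_+, \qquad \Phi(z,v) = (z, v/g(z)),
\]
defined on $\{g > 0\} \times \mathbb{R}_+$ (the points with $g(z_i) = 0$ can be ignored for defining $\xi$ since $tg(z_i) = 0 < v_i$ for all $t > 0$, which holds almost surely for every $i$). By the mapping theorem for Poisson processes, the image of $\m$ under $\Phi$ is a Poisson point process on $\Sigma \times \mathbb{R}_+$ with intensity $g(z)\mu(\mathrm{d}z) \otimes \mathrm{d}t$; call it $\widetilde{\m} = \sum_i \delta_{(z_i, t_i)}$, where $t_i = v_i/g(z_i)$. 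The total $t$-rate is $\int g \,\mathrm{d}\mu = 1 < \infty$, so on every bounded time interval $\widetilde{\m}$ has a.s.\ finitely many points. Crucially, $\xi = \inf\{t \geq 0 : t g(z_i) \geq v_i \text{ for some } i\} = \min_i t_i$ is just the first time coordinate in $\widetilde{\m}$.

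For claims (1) and (2), the fact that the marginal intensity of $\widetilde{\m}$ on the time axis is Lebesgue measure and the spatial projection is a probability measure $g(z)\mu(\mathrm{d}z)$ is precisely the setting of a marked Poisson process on $\mathbb{R}_+$ with i.i.d.\ marks. The first arrival of a unit-rate Poisson process on $\mathbb{R}_+$ is almost surely unique (this handles uniqueness of $\hat{\imath}$) and has an $\mathrm{Exp}(1)$ law, while conditionally on this time the associated mark is drawn from $g(z)\mu(\mathrm{d}z)$ independently; hence $(z_{\hat{\imath}}, \xi) \sim g(z)\mu(\mathrm{d}z) \otimes \mathrm{Exp}(1)$, giving (2). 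Uniqueness of $\hat{\imath}$ in (1) follows because a.s.\ no two $t_i$'s coincide.

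For claim (3), I will invoke the standard strong Markov property for Poisson processes: the point measure $\widetilde{\m}' = \sum_{i \neq \hat{\imath}} \delta_{(z_i, t_i - \xi)}$ is again a Poisson point process on $\Sigma \times \mathbb{R}_+$ with intensity $g(z)\mu(\mathrm{d}z) \otimes \mathrm{d}t$, and is independent of $(z_{\hat{\imath}}, \xi)$. This follows by conditioning on $(z_{\hat{\imath}}, \xi) = (z^\ast, t^\ast)$ and using that the restriction of $\widetilde{\m}$ to $\Sigma \times (t^\ast, \infty)$ is independent of its restriction to $\Sigma \times [0, t^\ast]$; translating by $t^\ast$ preserves the intensity. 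Pulling $\widetilde{\m}'$ back via $\Phi^{-1}(z,t) = (z, tg(z))$ recovers a Poisson point process on $\Sigma \times \mathbb{R}_+$ with intensity $\mu \otimes \mathrm{d}v$, and an inspection of the formula $v_i - \xi g(z_i) = g(z_i)(t_i - \xi)$ shows this pullback is exactly $\m'$, proving (3).

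The main obstacles are routine but worth pinning down: verifying that $\Phi$ and $\Phi^{-1}$ are measurable bijections on the relevant set (trivial once one restricts to $\{g > 0\}$ and notes points with $g(z_i) = 0$ are left untouched and do not affect $\xi$), and the Fubini-type computation showing that the pushforward intensity is indeed $g(z)\mu(\mathrm{d}z) \otimes \mathrm{d}t$. Most of the argument reduces to standard properties of unit-rate Poisson processes after the change of variables, so the only genuine step is the change-of-variables itself.
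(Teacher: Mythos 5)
Your proof is correct, and it takes a related but genuinely cleaner route than the paper's. The paper works in the original coordinates, introducing restricted infima $\xi^A$ for measurable $A \subset \Sigma$, noting that $\xi^A \sim \Exp\bigl(\int_A g\,d\mu\bigr)$ with independence across disjoint sets, and then invoking a strong Markov property for the ``stopping set'' $\{(z,v)\colon v \leq \xi g(z)\}$ (Rozanov) to deal with part (3). You instead change variables first via $\Phi(z,v)=(z,v/g(z))$, which by the mapping theorem turns $\m$ (restricted to $\{g>0\}$) into a unit-rate marked Poisson process on $\mathbb{R}_+$ with mark distribution $g\,d\mu$; then $\xi$ becomes the first arrival time, and all three assertions reduce to textbook facts about exponential interarrivals and i.i.d.\ marks, with the strong Markov property for the stopping set replaced by the ordinary strong Markov property at the stopping time $\xi$ for a one-dimensional Poisson process. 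This is arguably more transparent, at the cost of one extra Fubini computation (the pushforward intensity) and a decomposition into $\{g>0\}$ and $\{g=0\}$ parts. On that last point you are a bit terse: to conclude that $\m'$ (not merely $\widetilde{\m}'$) is Poisson with intensity $\mu\otimes dv$ and independent of $(\xi,\hat{\imath})$, you should note explicitly that the points with $g(z_i)=0$ form a Poisson process with intensity $\mathbbm{1}_{\{g=0\}}\mu\otimes dv$, independent of the $\{g>0\}$ piece by the restriction theorem, and that these points appear unchanged in $\m'$ since $v_i - \xi g(z_i)=v_i$ there; superposing the two independent Poisson pieces then gives the claim.
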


\begin{proof}
For measurable $A \subset \Sigma$, define the random variable
\begin{equation}
\xi^A = \inf \{ t \geq 0\colon\, \exists\, i \in \N \text{ such that } t \mathbbm{1}_A(z_i) g(z_i) \geq v_i\}.
\end{equation}
Elementary properties of Poisson point processes (see e.g.\ Resnick~\cite{Re}[(a-b), pp.\ 130])
yield that
\begin{display}
\label{e:xiA}
$\xi^A$ is exponentially distributed with parameter $\int_A g(z) \mu(dz)$, and if $A$
and $B$ are disjoint, then $\xi^A$ and $\xi^B$ are independent.
\end{display}
Property (1) follows from \eqref{e:xiA} because $\Sigma$ is separable and two independent
exponential random variables are a.s.\ distinct. Moreover, since
\begin{equation}
\mathbb{Q}[\xi \geq \alpha, z_{\hat{\imath}} \in A]
= \mathbb{Q}[\xi^{\Sigma \setminus A} > \xi^A \geq \alpha],
\end{equation}
Property (2) also follows from \eqref{e:xiA} by using elementary properties of the
minimum of independent exponential random variables. Thus it remains to prove Property
(3).

We first claim that, conditional on $\xi$, $\m'' = \sum_{i \neq \hat{\imath}} \delta_{(z_i,v_i)}$ is a
Poisson point process that is independent of $z_{\hat{\imath}}$ and 
has intensity measure $\mathbbm{1}_{\{v > \xi g(z)\}} [\mu(\d z) \otimes \d v]$. Indeed,
this is a consequence of the strong Markov property for Poisson point processes together
with the fact that $\{(z,v) \in \Sigma \times \mathbb{R}_+\colon\, v \leq \xi g(z)\}$ is a stopping
set (see Rozanov~\cite[Theorem~4]{Ro}).

Now, given $\xi$, $\m'$ is a mapping of $\m''$ (in the sense of Resnick~\cite[Proposition~3.7]{Re}).
This mapping pulls back the measure $\mathbbm{1}_{\{v > \xi g(z)\}}[\mu(\d z) \otimes \d v]$
to $\mu(\d z) \otimes \d v$. Since the latter distribution does not involve $\xi$, we obtain Property
(3).
\end{proof}

\begin{figure}
\centering \includegraphics[width = 0.65 \textwidth]{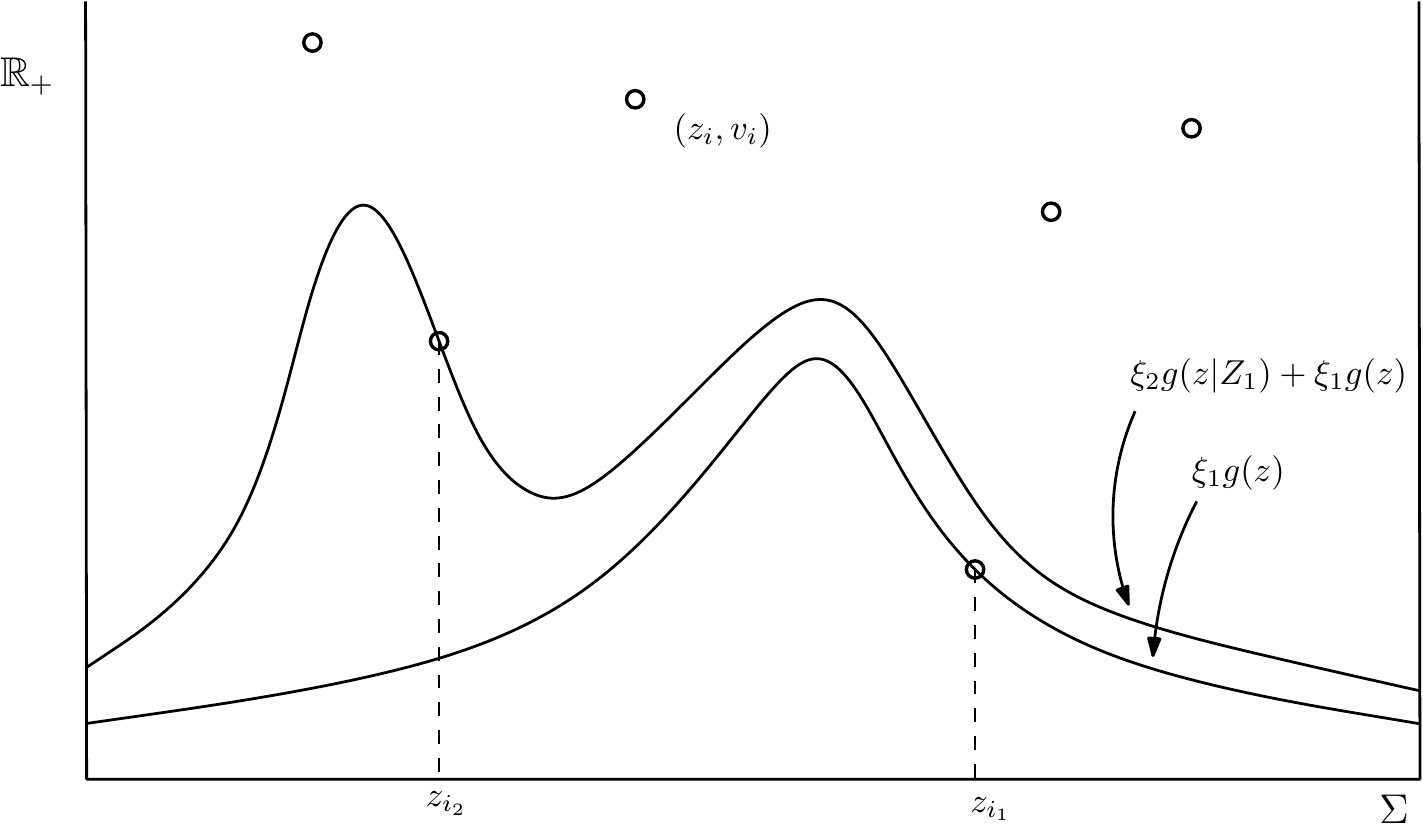}
\caption{An example illustrating the definition of $\xi$ and $\hat{\imath}$ in
Proposition~\ref{p:simulate}, and the definition of $\xi_1$, $Z_1$ and $\xi_2$, $Z_2$
in \eqref{e:xis}}.
\label{f:xi}
\end{figure}

In Proposition~\ref{p:poisson} below we use Proposition~\ref{p:simulate} to simulate a collection
$(Z_j)_{j\in\N}$ of independent random elements of $\Sigma$ using the single Poisson point
process $\m$ defined above. Formally, suppose that in some probability space $(M, \mathcal{M},
\mathcal{P})$ we are given a collection $(Z_j)_{j \in \N}$ of independent (not necessarily identically
distibuted) random elements of $\Sigma$ such that
\begin{equation}
\label{e:density2}
\text{the distribution of $Z_j$ is given by $g_j(z) \mu(dz)$, $j \in \N$}.
\end{equation}
In the same spirit as the definition of $\xi$ in Proposition~\ref{p:simulate}, we define what we call
the \emph{soft local time} $G=(G_j)_{j=1}^k$ associated with a sequence $(g_j)_{j=1}^k$ of
measurable functions:
\begin{equation}
\label{e:xis}
\begin{split}
& \xi_{1} = \inf \big\{ t \geq 0\colon\, tg_1(z_i) \geq v_i \text{ for at least one } i \in \N\big\},\\
& \quad G_{1}(z) = \xi_{1} \, g_1(z),\\
& \qquad \quad \vdots\\
& \xi_{k} =  \inf \big\{ t \geq 0\colon\, tg_k(z_i) + G_{k-1}(z_i) \geq v_i \text{ for at least } k
\text{ indices } i \in \N\big\},\\
& \quad G_{k}(z) = \xi_{1} \, g_1(z) + \dots + \xi_{k} \, g_k(z)
\end{split}
\end{equation}
(see Fig.~\ref{f:xi} for an illustration of this recursive procedure).

\begin{proposition}
Subject to {\rm (\ref{e:density2}--\ref{e:xis})},
\label{p:poisson}
\begin{align}
\label{e:ijk}
&(\xi_{j})_{j = 1}^{J} \text{ are i.i.d. } \mathrm{EXP}(1).\\
&\text{A.s.\ there is a unique $i_{J}$ such that $G_{J}(z_{i_{J}}) = v_{i_J}$}.\\
&(z_{i_{1}}, \dots, z_{i_{J}}) \overset d = (Z_1, \dots, Z_{J}).\\
&\text{$\m' = \;\; \sum_{\mathclap{i \not \in \{i_{1}, \dots, i_{J}\}}} \;\; \delta_{(z_i, v_i - G_{J}(z_i))}$
is distributed as $\m$ and is independent of the above.}
\end{align}
\end{proposition}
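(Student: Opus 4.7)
The proof will proceed by induction on $J$, with Proposition~\ref{p:simulate} providing both the base case and the inductive step. The key conceptual point is that the recursive rule in \eqref{e:xis} is tailor-made so that, at stage $j$, the operation producing $(\xi_j, i_j)$ is exactly an application of Proposition~\ref{p:simulate} to the residual Poisson process left over from the preceding stage, with density $g_j$.

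For $J=1$, the claim is just Proposition~\ref{p:simulate} applied to $g=g_1$: we get $\xi_1 \sim \mathrm{Exp}(1)$, an a.s.\ unique index $i_1$ with $\xi_1 g_1(z_{i_1}) = v_{i_1}$, hence $G_1(z_{i_1}) = v_{i_1}$; moreover $z_{i_1}$ has law $g_1(z)\mu(dz)$, and the shifted point process $\m^{(1)} = \sum_{i \neq i_1} \delta_{(z_i,v_i - G_1(z_i))}$ is a Poisson point process with intensity $\mu \otimes dv$, independent of $(\xi_1, i_1, z_{i_1})$.

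For the inductive step, suppose the statement holds with $J-1$ in place of $J$. Then the process $\m^{(J-1)} = \sum_{i \notin\{i_1,\dots,i_{J-1}\}} \delta_{(z_i, v_i - G_{J-1}(z_i))}$ is distributed as $\m$ and is independent of $\bigl(\xi_1,\dots,\xi_{J-1}, z_{i_1},\dots,z_{i_{J-1}}\bigr)$. The crucial observation is that the previously selected indices $i_1,\dots,i_{J-1}$ automatically satisfy $G_{J-1}(z_{i_j}) \geq v_{i_j}$, since
\begin{equation}
G_{J-1}(z_{i_j}) - v_{i_j} \;=\; G_{J-1}(z_{i_j}) - G_j(z_{i_j}) \;=\; \sum_{l=j+1}^{J-1} \xi_l\, g_l(z_{i_j}) \;\geq\; 0.
\end{equation}
Hence the definition of $\xi_J$ in \eqref{e:xis}, which asks for the smallest $t$ at which at least $J$ indices $i$ satisfy $t g_J(z_i) + G_{J-1}(z_i) \geq v_i$, is equivalent to asking for the smallest $t$ at which some \emph{new} index $i \notin \{i_1,\dots,i_{J-1}\}$ satisfies $t g_J(z_i) \geq v_i - G_{J-1}(z_i)$. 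This is precisely the $\xi$ produced by Proposition~\ref{p:simulate} applied to $\m^{(J-1)}$ and $g_J$.

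Applying Proposition~\ref{p:simulate} to $\m^{(J-1)}$ therefore yields $\xi_J \sim \mathrm{Exp}(1)$, independent of the sigma-algebra generated by $\m^{(J-1)}$'s construction data and hence (by the inductive independence statement) independent of $(\xi_1,\dots,\xi_{J-1})$; an a.s.\ unique new index $i_J$ satisfying $G_J(z_{i_J}) = v_{i_J}$; a selected point $z_{i_J}$ with density $g_J$, independent of everything preceding; and a further shifted point process $\m^{(J)} = \sum_{i \notin \{i_1,\dots,i_J\}} \delta_{(z_i, v_i - G_J(z_i))}$ that is Poisson with intensity $\mu\otimes dv$ and independent of the data collected so far. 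This delivers all four conclusions at level $J$. The main (mild) obstacle is precisely the bookkeeping in the previous display: one must observe that the recursive ``$J$ indices'' formulation in \eqref{e:xis} is exactly the right translation of ``one new index in the residual process,'' after which the argument reduces to iterating the single-step result.
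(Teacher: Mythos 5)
Your proof is correct and is essentially the paper's proof spelled out: the paper simply states ``Apply Proposition~\ref{p:simulate} repeatedly, using induction on $J$'', and your write-up supplies exactly the bookkeeping (in particular, the observation that $G_{k-1}(z_{i_j}) \geq v_{i_j}$ for $j<k$, so that the ``at least $k$ indices'' condition in \eqref{e:xis} reduces to ``one new index in the residual process $\m^{(k-1)}$'') that makes the iteration legitimate. Nothing is missing.
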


\begin{proof}
Apply Proposition~\ref{p:simulate} repeatedly, using induction on $J$.
\end{proof}

We close this section by exploiting the above construction to couple two collections
of independent random elements of $\Sigma$ using the same Poisson point process
as basis. The following corollary will be needed in Appendix~\ref{s:simulation}.

\begin{corollary}
\label{c:couplesystem}
Let $(g_j(\cdot))_{j = 1}^J$ be a family of densities with corresponding $\xi_{j}$, $G_{j}$,
$i_{j}$, $j = 1, \dots, J$, as in {\rm (\ref{e:xis}--\ref{e:ijk})}. Then, for any $\rho > 0$,
\begin{equation}
\label{Qineq}
\mathbb{Q} \left[ \sum_{j \leq J} \delta_{z_{i_{j}}} \geq  \sum_{i\colon v_i < \rho} \delta_{z_i} \right]
\geq \mathbb{Q} \big[ G_{J} \geq \rho \big].
\end{equation}
\end{corollary}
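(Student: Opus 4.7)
My plan is to unpack what the soft local time $G_J$ encodes geometrically in the Poisson picture and then read off the inequality as a deterministic set inclusion on the event $\{G_J \geq \rho\}$ (interpreted as $G_J(z)\geq \rho$ for all $z\in \Sigma$).

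The key observation I would make first is that by construction, the indices $i_1,\dots,i_J$ selected by the soft local time procedure are exactly the indices $i$ of the atoms $(z_i,v_i)$ that lie \emph{below} the graph of $G_J$, i.e.\ such that $v_i \leq G_J(z_i)$. More precisely, at each stage $k$, $\xi_k$ is defined as the smallest $t$ such that $G_{k-1}(z_i) + t g_k(z_i) \geq v_i$ for at least $k$ distinct indices, and by Proposition~\ref{p:poisson} there is a.s.\ a unique such new index $i_k$; inductively this yields that a.s.
\[
\{i_1,\dots,i_J\} \;=\; \{i\in\N \colon v_i \leq G_J(z_i)\}.
\]

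With that identification, the corollary is almost tautological. Suppose $G_J(z)\geq \rho$ for every $z\in\Sigma$. Then for any atom with $v_i<\rho$, one has $v_i < \rho \leq G_J(z_i)$, so $i\in\{i_1,\dots,i_J\}$. Consequently the multiset $\{i : v_i<\rho\}$ is contained in $\{i_1,\dots,i_J\}$, which gives the pointwise domination of counting measures
\[
\sum_{j\leq J}\delta_{z_{i_j}} \;\geq\; \sum_{i\colon v_i<\rho}\delta_{z_i}
\]
(since on the right each $i$ occurs once, and on the left each such $i$ also contributes $\delta_{z_i}$). Taking $\mathbb{Q}$-probability of the event $\{G_J\geq \rho\}$ yields \eqref{Qineq}.

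The only genuine step is the bookkeeping identification $\{i_1,\dots,i_J\}=\{i: v_i\leq G_J(z_i)\}$, which I expect to be the main (minor) obstacle: it requires a clean induction on $k$ using the definition \eqref{e:xis} of $\xi_k$ and the a.s.\ uniqueness of the selected index at each stage provided by Proposition~\ref{p:poisson}. Once this is in place, the rest is a one-line deterministic inclusion followed by monotonicity of $\mathbb{Q}$. No further tail estimates or independence arguments are needed.
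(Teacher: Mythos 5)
The paper states Corollary~\ref{c:couplesystem} without proof, presenting it as an immediate consequence of the soft-local-time construction in Propositions~\ref{p:simulate}--\ref{p:poisson}, so there is no "paper's proof" to compare against directly. Your argument is correct and is the intended one: the central observation, that a.s.\ $\{i_1,\dots,i_J\}=\{i\in\N : v_i\le G_J(z_i)\}$, is exactly what the recursive definition \eqref{e:xis} together with the a.s.\ uniqueness of the selected index at each step (Proposition~\ref{p:simulate}(1), applied inductively via Proposition~\ref{p:poisson}) is set up to deliver, and once that identification is made, the event $\{G_J(z)\ge\rho\ \forall z\}$ deterministically forces $\{i:v_i<\rho\}\subseteq\{i_1,\dots,i_J\}$, hence the domination of point measures, hence \eqref{Qineq} by monotonicity of $\mathbb{Q}$. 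The one place worth being explicit if you write this up in full is the induction itself: at stage $k$, $G_k\ge G_{k-1}$ pointwise (since $\xi_k, g_k\ge 0$), so the $k-1$ previously captured atoms remain below the graph, and a.s.\ exactly one new atom reaches the graph at $t=\xi_k$; this gives $\{i:G_k(z_i)\ge v_i\}=\{i_1,\dots,i_k\}$ with strict inequality $G_k(z_i)<v_i$ for all other $i$, which is what you need to close the induction.
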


\noindent
Note that the right-hand side of \eqref{Qineq} only depends on the soft local time, which may
e.g.\ be estimated through large deviation bounds.


\section{Simulation and domination of particles}
\label{s:simulation}


\subsection{Simple random walk}
\label{ss:SRW}

In this section we collect a few facts about the heat kernel of random walks on $\Z$.
Let $p_n(x,x') = P_x(Z_n = x')$, $x,x'\in\Z$, where $P_x$ stands for the law of a lazy
simple random walk $Z_n$ on $\Z$ as defined in Section~\ref{s:intro}, i.e., $p_1(0,x) > 0$
if and only if $x \in \{-1,0,1\}$ and $p_1(0,1) = p_1(0,-1)$. Then there exists constants
$C, c>0$ such that the following hold for all $n \in \N$:
\begin{align}
\label{e:localclt}
& \sup_{x \in \mathbb{Z}}  p_n(0,x) \leq \frac{C}{\sqrt{n}}, \\
\label{e:SRW1}
& |p_n(0,x) - p_n(0,x')| \le \frac{C |x-x'|}{n} \;\;\; \forall \; x, x' \in \Z, \\
\label{e:SRW2}
& P_0(|Z_n| > \sqrt{n} \log n ) \le \, C e^{-c\log^{2} n}.
\end{align}
For \eqref{e:localclt}, see e.g.\ Lawler and Limic~\cite[Theorem 2.4.4]{LaLi}.
To get \eqref{e:SRW1}, use \cite[Theorem 2.3.5]{LaLi}, while \eqref{e:SRW2}
follows by an application of Bernstein's inequality.

The above observations will be used in the proof of Lemma~\ref{l:integration}
below, which deals with the integration of the heat kernel over an evenly
distributed cloud of sample points and is crucial in the proof of Theorem~\ref{t:decouple}
in Section~\ref{s:decouple}. In order to state this lemma, we need the following definitions.

\begin{definition}
\label{l:balanced}
(a) For $H\subset \Z$ and $L \in \N$,
we say that a collection of intervals $\{C_i\}_{i \in I}$ indexed by a subset $I \subset \mathbb{Z}$ is an $L$-paving of $H$
when $H \subset \cup_{i \in I} C_i$ and there is an $x \in \Z$ such that
\begin{equation}
\label{e:paving}
\{C_i\}_{i \in I} = \{[0,L) \cap \Z + Li + x \colon\,  i  \in I\}.
\end{equation}
(b) We say that a collection of points $(x_j)_{j \in J} \subset \mathbb{Z}$ is $\rho$-dense with
respect to the $L$-paving $\{C_i\}_{i \in I}$ when
\begin{equation}
\label{e:rhodense}
\#\{j \colon x_j \in C_i\} \ge \rho L \;\;\; \forall \; i \in I.
\end{equation}
\end{definition}

We know that $\sum_{x\in\Z} p_n(0,x)=1$. The next lemma approximates this normalization when
the sum runs over a dense collection $(x_j)_{j \in J}$.

\begin{lemma}
\label{l:integration}
Let $\{C_i\}_{i \in I}$ be an $L$-paving of $H \subset \Z$ and $(x_j)_{j \in J}$ be a $\rho$-dense collection
with respect to $\{C_i\}_{i \in I}$. Then, for all $n \in \N$,
\begin{equation}
\sum_{j \in J} p_n(0,x_j) \geq \rho \left\{ P_0\left(Z_n \in H \right) - \frac{cL \log n}{\sqrt{n}}\right\}.
\end{equation}
\end{lemma}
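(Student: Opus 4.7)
The plan is to compare the discrete sum $\sum_{j\in J} p_n(0,x_j)$ to a Riemann-like sum $\sum_{x \in H} p_n(0,x) = P_0(Z_n \in H)$ by exploiting the paving structure: within each cell $C_i$ the heat kernel varies slowly (by \eqref{e:SRW1}), so the sum of $p_n(0,x_j)$ over the $\rho L$ points $x_j \in C_i$ is close to $\rho$ times the sum of $p_n(0,x)$ over $x \in C_i$. The main technical point to control is the cumulative error over all cells, which is why we restrict attention to cells near the origin using the localization estimate \eqref{e:SRW2}.

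First I would fix a cell $C_i$ of the paving and pick any reference point $x_i^\star \in C_i$. By \eqref{e:SRW1}, for every $y \in C_i$ we have $|p_n(0,y)-p_n(0,x_i^\star)| \le CL/n$. Averaging over $y \in C_i$ yields $|p_n(0,x_i^\star) - L^{-1}\sum_{x\in C_i} p_n(0,x)| \le CL/n$, and then by the triangle inequality, for any $x_j \in C_i$,
\begin{equation}
p_n(0,x_j) \;\ge\; \tfrac{1}{L}\sum_{x\in C_i} p_n(0,x) \;-\; \tfrac{2CL}{n}.
\end{equation}
Using the $\rho$-density assumption, which guarantees at least $\rho L$ points of $(x_j)$ in each $C_i$, summing the above over those $x_j \in C_i$ gives
\begin{equation}
\sum_{j : x_j \in C_i} p_n(0,x_j) \;\ge\; \rho \sum_{x \in C_i} p_n(0,x) \;-\; \tfrac{2C\rho L^2}{n}.
\end{equation}

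Next I would restrict to the set $I' = \{i \in I : C_i \cap [-\sqrt{n}\log n, \sqrt{n}\log n] \neq \emptyset\}$. Since $C_i$ has length $L$, we have $|I'| \le 2\sqrt{n}\log n/L + 2$. Moreover, by \eqref{e:SRW2} and because $\{C_i\}_{i \in I}$ covers $H$,
\begin{equation}
\sum_{i \in I'} \sum_{x\in C_i} p_n(0,x) \;\ge\; P_0\bigl(Z_n \in H, |Z_n| \le \sqrt{n}\log n\bigr) \;\ge\; P_0(Z_n \in H) - Ce^{-c\log^2 n}.
\end{equation}

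Finally, summing the per-cell bound over $i \in I'$ (and using that the $C_i$ are disjoint so the left-hand side is bounded above by $\sum_{j \in J} p_n(0,x_j)$), I would obtain
\begin{equation}
\sum_{j \in J} p_n(0,x_j) \;\ge\; \rho\bigl(P_0(Z_n \in H) - Ce^{-c\log^2 n}\bigr) - |I'| \cdot \tfrac{2C\rho L^2}{n},
\end{equation}
and the last term is $O(\rho L \log n / \sqrt{n})$ by the bound on $|I'|$, while the Gaussian tail $e^{-c\log^2 n}$ is negligible compared to $L\log n/\sqrt{n}$ (adjusting the constant $c$ in the statement). The only subtlety is the bookkeeping: one must verify that the restriction to $I'$ is harmless, since cells far from the origin contribute negligibly to $P_0(Z_n \in H)$ thanks to \eqref{e:SRW2}, whereas keeping them in would inflate the error to an uncontrolled $L^2|I|/n$.
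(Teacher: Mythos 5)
Your proof is correct and follows essentially the paper's approach: compare the sum cell-by-cell to $P_0(Z_n\in H)$, applying the regularity estimate \eqref{e:SRW1} on cells within distance $\sqrt{n}\log n$ of the origin and using the tail bound \eqref{e:SRW2} to neglect the rest. The only cosmetic difference is your choice of reference value in each cell (the cell average) versus the paper's (the cell minimum $z_i$, which makes the bound $p_n(0,x_j)\ge p_n(0,z_i)$ immediate without invoking regularity, and defers all appeals to \eqref{e:SRW1}--\eqref{e:SRW2} to a single error term $\sum_i\sum_{x\in C_i}|p_n(0,x)-p_n(0,z_i)|$).
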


\begin{proof}
For each $i \in I$, choose $z_i \in C_i$ such that
\begin{align}
p_n(0,z_i) = \min_{x \in C_i} p_n(0,x).
\end{align}
Then we have
\begin{align}
\label{e:int1}
& \sum_{j \in J} p_n(0,x_j) \geq \sum_{i \in I} \sum_{j \colon x_j \in C_i} p_n(0,x_j)
\ge \sum_{i \in I} \rho L p_n(0,z_i) \nonumber\\
& \ge - \rho \sum_{i \in I} \sum_{x \in C_i} |p_n(0,x) -p_n(0,z_i)| + \rho P_0(Z_n \in H).
\end{align}
On the other hand, by \eqref{e:SRW1}--\eqref{e:SRW2} we have
\begin{align}\label{e:int2}
\sum_{i \in I} \sum_{x \in C_i} |p_n(0,x) -p_n(0,z_i)|
& \le 2 P_0(|Z_n| > \sqrt{n} \log n) + \sum_{|x| \le \sqrt{n} \log n} cL/n \nonumber\\
& \le c L \log n / \sqrt{n}
\end{align}
and the claim follows by combining \eqref{e:int1} and \eqref{e:int2}.
\end{proof}


\subsection{Coupling of trajectories}
\label{ss:couptraj}

Given a sequence of points $(x_j)_{j \in J}$ in $\mathbb{Z}$, let $(Z^j_n)_{n \in \Z_+}$, $j \in J$,
be a sequence of independent simple random walks on $\mathbb{Z}$ starting at $x_j$, and
let $\bigotimes_{j \in J} P_{x_j}$ denote their joint law. The next lemma, which will be needed
in Appendix~\ref{s:decouple}, provides us with a way to couple the positions of these random
walks at time $n$ with a Poisson point process on $\mathbb{Z}$. This lemma is similar in
flavor to \cite[Proposition 4.1]{PeSiSoSt}.

\newconstant{c:couple_exp}
\newconstant{c:couple}

\begin{lemma}
\label{l:couple}
Let $(x_j)_{j \in J} \subset \mathbb{Z}$ be $\rho$-dense with respect to the $L$-paving
$\{C_i\}_{i \in I}$ of $H \subset \Z$.
Then for any $\rho' \leq \rho$ there exists a coupling $\mathbb{Q}$ of
$\otimes_{j \in J} P_{x_j}$ and the law of a Poisson point process $\sum_{j' \in J'}
\delta_{Y_{j'}}$ on $\mathbb{Z}$ with intensity $\rho'$ such that
\begin{equation}
\mathbb{Q} \left[ \1{H'} \sum_{j \in J} \delta_{Z^j_n} \geq \1{H'}
\sum_{j' \in J'} \delta_{Y_{j'}} \right]
\geq 1 - |H'| \, \exp \left\{ - (\rho - \rho')L
+ \left(\frac{\useconstant{c:couple_exp} \rho L^2 \log n}{\sqrt{n}} \right) \right\}
\end{equation}
for all $H' \subset \Z$ such that $ \{z \in \Z \colon \dist(z,H') \le n\} \subset H$
and all $n \geq \useconstant{c:couple} L^2$.
\end{lemma}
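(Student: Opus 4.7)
My plan is to build the coupling from the Poisson-point-process machinery of Appendix~\ref{s:simwithPoisson}. I will apply Corollary~\ref{c:couplesystem} on $\Sigma = \Z$ (with counting measure) using densities $g_j := p_n(x_j,\cdot)$: the selected atoms $(z_{i_j})_{j \in J}$ are then jointly distributed as $(Z^j_n)_{j \in J}$ and can be extended to a coupling of the full trajectory collection $\bigotimes_{j \in J} P_{x_j}$ by sampling, conditionally on these endpoints, independent random-walk bridges; the remaining atoms $\{(z_i, v_i) \colon v_i < \rho'\}$ form the desired PPP on $\Z$ of intensity $\rho'$. Writing $G(z) := \sum_{j \in J} \xi_j\, p_n(x_j, z)$ (with $\xi_j \sim \text{Exp}(1)$ i.i.d.) for the soft local time, a minor refinement of Corollary~\ref{c:couplesystem}, proved by the same argument, will show that on the event $\{G(z) \ge \rho'\ \forall\, z \in H'\}$ every Poisson atom with $z_i \in H'$ and $v_i < \rho'$ is selected, yielding the desired domination on $H'$. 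A union bound then reduces the statement to proving, for each $z \in H'$,
\[
\mathbb{Q}[G(z) < \rho'] \le \exp\!\left(-(\rho - \rho')L + \frac{c\rho L^2 \log n}{\sqrt{n}}\right).
\]

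\textbf{Mean estimate.} For a fixed $z \in H'$, I will use translation invariance to rewrite $\E[G(z)] = \sum_j p_n(x_j,z) = \sum_j p_n(0, z - x_j)$ as an integration of the heat kernel against the shifted collection $(z - x_j)_{j \in J}$, which is $\rho$-dense with respect to the shifted paving $\{z - C_i\}_{i \in I}$ of $z - H$. Lemma~\ref{l:integration} then gives $\E[G(z)] \ge \rho\bigl(P_0(Z_n \in z - H) - cL\log n/\sqrt{n}\bigr)$. The hypothesis $\{w \colon \dist(w, H') \le n\} \subset H$ together with $|Z_n| \le n$ (lazy SRW moves at most $n$ in $n$ steps) forces $P_0(Z_n \in z - H) = 1$, hence $\E[G(z)] \ge \rho - c\rho L\log n/\sqrt{n}$.

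\textbf{Chernoff at $t = L$.} Since $G(z) = \sum_j a_j(z)\,\xi_j$ is a weighted sum of independent Exp(1)'s with $a_j(z) := p_n(x_j,z) \le C/\sqrt{n}$ by \eqref{e:localclt}, the exponential Chernoff inequality gives, for any $t > 0$, $\mathbb{Q}[G(z) < \rho'] \le \exp\bigl(t\rho' - \sum_j \log(1 + t a_j(z))\bigr)$. I will choose $t = L$, which is admissible because $t\,\max_j a_j(z) \le CL/\sqrt{n} \le 1$ as soon as $n \ge \useconstant{c:couple}\,L^2$ with $\useconstant{c:couple}$ large enough. Combining $\log(1+u) \ge u - u^2/2$ on $[0,1]$ with the crude bound $\sum_j a_j(z)^2 \le \max_j a_j(z) \cdot \E[G(z)] \le C\rho/\sqrt{n}$ turns the exponent into $-L(\E[G(z)] - \rho') + \tfrac{L^2}{2}\sum_j a_j(z)^2$, which after inserting the mean estimate is bounded by the required quantity; summation over $z \in H'$ then yields the factor $|H'|$ in the conclusion.

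\textbf{Main obstacle.} The delicate point is the Chernoff calibration: $t = L$ is not the optimizer of a quadratic (Gaussian-type) Chernoff bound, but it is the choice that produces precisely the target exponent $-(\rho - \rho')L$. Its admissibility rests on $\max_j a_j(z) \lesssim 1/\sqrt{n}$, which keeps $t \cdot \max_j a_j(z) \lesssim L/\sqrt{n}$ inside the Taylor-expansion regime for $\log(1 + t a_j)$; this is exactly what the assumption $n \ge \useconstant{c:couple} L^2$ encodes. The other ingredients (Lemma~\ref{l:integration}, the range bound $|Z_n| \le n$, and the adapted Corollary~\ref{c:couplesystem}) are essentially bookkeeping once this calibration is in place.
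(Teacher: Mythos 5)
Your strategy and all the main ingredients (Corollary~\ref{c:couplesystem} restricted to $H'$, the soft-local-time lower bound via Lemma~\ref{l:integration} and the range bound $|Z_n|\le n$, union bound over $z\in H'$, exponential Markov at $t=L$, and the Taylor expansion of $\log(1+u)$) match the paper's proof. There is, however, one unjustified inequality in the ``Chernoff at $t=L$'' step. You claim $\sum_j a_j(z)^2 \le \max_j a_j(z)\cdot \E[G(z)] \le C\rho/\sqrt n$. The first inequality is fine, but the second requires $\E[G(z)]=\sum_j p_n(x_j,z)\lesssim\rho$, and this does not follow: the hypothesis that $(x_j)_j$ is $\rho$-dense is only a \emph{lower} bound on the number of points in each $C_i$; there is nothing to prevent some $C_i$ from containing many more than $\rho L$ points, in which case $\sum_j p_n(x_j,z)$ can be arbitrarily large. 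As a consequence, after inserting the mean estimate you are left with an uncontrolled term $\tfrac{L^2}{2}\sum_j a_j(z)^2$.

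The fix is exactly the factoring that the paper carries out (via $1+u\ge e^{u(1-u)}$): do not bound $\sum_j a_j^2$ on its own, but keep it tied to $\sum_j a_j$. Writing $S=\sum_j a_j(z)$, the exponent becomes
\begin{equation*}
L\rho' - LS + \tfrac{L^2}{2}\sum_j a_j(z)^2 \;\le\; L\rho' - LS + \tfrac{L^2}{2}\Big(\sup_j a_j(z)\Big) S \;=\; L\rho' - LS\Big(1 - \tfrac{L}{2}\sup_j a_j(z)\Big),
\end{equation*}
and for $n\ge \useconstant{c:couple}L^2$ the prefactor $1 - \tfrac{L}{2}\sup_j a_j(z)$ is positive, so the expression is decreasing in $S$ and one may substitute the \emph{lower} bound $S\ge \rho(1-cL\log n/\sqrt n)$. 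This yields the claimed exponent. With this correction your argument is a faithful reconstruction of the paper's proof.
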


\begin{proof}
By Corollary~\ref{c:couplesystem}, there exists a coupling $\mathbb{Q}$ such that
\begin{equation}
\label{e:coupleGG}
\mathbb{Q} \left[ \1{H'} \sum_{j \in J} \delta_{Z^j_n} \geq \1{H'}  \sum_{j' \in J'} \delta_{Y_{j'}} \right]
\geq \mathbb{Q} \big[ G_J(z) \geq \rho'\,\,\forall\,z \in H'\big],
\end{equation}
where $G_J(z) = \sum_{j \in J} \xi_j \, p_{n}(x_j, z)$ with $(\xi_j)_j$ i.i.d.\ $\mathrm{EXP}(1)$
random variables. We will estimate the right-hand side of \eqref{e:coupleGG} using concentration
inequalities. First, noting that $P_z(Z_n \in H) = 1$ for any $z \in H'$,
we use Lemma~\ref{l:integration} to estimate
\begin{equation}
\label{e:EGJ}
E^{\mathbb{Q}}[G_J(z)] = \sum_{j \in J} p_{n}(z, x_j)
\geq \rho \Big( 1 - \frac{cL \log n}{\sqrt{n}} \Big), \qquad z \in H'.
\end{equation}
Next, estimate
\begin{align}
\nonumber
&\mathbb{Q} \big[\exists\, z \in H'\colon\,G_J(z) < \rho'\big]
\leq |H'| \sup_{z \in H'}\mathbb{Q} [ G_J(z) < \rho']\\
& \label{e:QGJz}
\qquad \leq |H'| \, e^{\rho'L} \sup_{z \in H'}E^{\mathbb{Q}} \big[ \exp\{ - L G_J(z) \} \big].
\end{align}
Now note that, for any $z \in \mathbb{Z}$,
\begin{equation}
E^{\mathbb{Q}} \big[ \exp\{ - L G_J(z) \} \big] = \smash{\prod_{j \in J}}
E^{\mathbb{Q}} \big[ \exp\{ - L \xi_j p_n(x_j,z) \} \big]
= \prod_{\smash{j \in J}} \Big(1 + L p_n(x_j,z) \Big)^{-1}.
\end{equation}
Assuming that $n \geq (c L)^2 \vee e$, we can use \eqref{e:localclt} to obtain that
\begin{equation}
\label{e:localclt2}
\sup_{x \in \mathbb{Z}} L p_n(0,x) \leq \frac{cL}{\sqrt{n}} \leq \tfrac12.
\end{equation}
A simple Taylor expansion shows that $\log(1 + u) \geq u - u^2$ for every $|u| \leq \tfrac12$.
Hence $1 + u \geq \exp \{u (1 - u)\}$ and
\begin{equation}
\begin{array}{e}
\prod_{j \in J}(1 + L p_n(z,x_j))^{-1}
& \leq & \prod_{j \in J} \exp \big\{ - L p_n(z,x_j) \big(1 - L p_n(z,x_j) \big) \big\}\\
& \leq & \exp \Big\{ - \sum_{j \in J} L p_n(z,x_j) \big( 1 - \sup_{x \in \mathbb{Z}} L p_n (0,x)  \big) \Big\}\\
& \leq & \exp \Big\{ - \rho L \big(1 - \tfrac{cL \log n}{\sqrt{n}} \big) \big( 1 - \tfrac{cL}{\sqrt{n}}  \big) \Big\}\\
&\leq & \exp \Big\{ - \rho L \big(1 - \tfrac{2cL \log n}{\sqrt{n}} \big) \Big\}.
\end{array}
\end{equation}
where the third inequality uses \eqref{e:EGJ} and \eqref{e:localclt2}. Inserting this estimate
into \eqref{e:QGJz}, we get the claim.
\end{proof}


\section{Decoupling of space-time boxes}
\label{s:decouple}

In this section we prove a decoupling inequality on two disjoint boxes in the space-time
plane $\Z_+ \times \Z$.


\subsection{Correlation}
\label{ss:correlation}

Intuitively, if two events depend on what happens at far away times, then they must be
close to independent due to the mixing of the dynamics. This is made precise in the
following theorem.

\newconstant{c:largendecouple}

\begin{theorem}
\label{t:decouple}
Let $B = ([a, b] \times [n, n']) \cap \Z^2$ be a space-time box as in Fig.~{\rm \ref{f:light_cone}}
for some $n \geq \useconstant{c:largendecouple}$, and let $D = \Z \times \Z_-$ be the space-time
lower halfplane. Recall Definitions~{\rm \ref{d:monotone}--\ref{d:support}}, and assume that
$f_1\colon\,\Omega \to [0,1]$ and $f_2\colon\,\Omega \to [0,1]$ are non-increasing random
variables with support in $D$ and $B$, respectively. Then, for any $\rho \geq 1$,
\begin{equation}
\label{e:dec}
\mathbb{E}^{\rho(1+n^{-1/16})}[f_1 f_2] \leq \mathbb{E}^{\rho(1+n^{-1/16})}[f_1] \,\,
\mathbb{E}^{\rho}[f_2] + c \big(\textnormal{per}(B) + n\big) \,e^{-c \rho n^{1/8}},
\end{equation}
where $\textnormal{per}(B)$ stands for the perimeter of $B$.
\end{theorem}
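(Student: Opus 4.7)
The idea is to exploit the Markov property of the Poisson field at time $n$: conditional on the $\sigma$-algebra $\mathcal{G}_0$ of the environment history up to time $0$ (which determines $f_1$), the configuration $\eta_n$ at time $n$ is obtained by independently evolving each initial particle forward for $n$ steps. I will couple $\eta_n$ with a fresh Poisson configuration $\tilde\eta_n$ of intensity $\rho$ on the spatial window relevant to $B$ so that $\eta_n$ pointwise dominates $\tilde\eta_n$ outside a small failure event. Monotonicity of $f_2$ will then let me replace $f_2$ by $\tilde f_2$, a copy of $f_2$ evaluated on the fresh environment driven by $\tilde\eta_n$, which will be unconditionally independent of $f_1$ and marginally distributed as $f_2$ under $\P^\rho$, yielding the desired factorization up to the coupling error.

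\textbf{Reduction and the soft local time coupling.} Set $\lambda := \rho(1+n^{-1/16})$ and let $\mathcal{G}_0$ be the $\sigma$-algebra generated by $(N(z,0))_z$, $(S^{z,i}_m)_{m\le 0,\,i\le N(z,0)}$ and $(U_y)_{y\in D}$; by Definitions~\ref{d:monotone}--\ref{d:support}, $f_1$ is $\mathcal{G}_0$-measurable, and given $\mathcal{G}_0$ the positions $(S^{z,i}_n)$ are endpoints of independent lazy SRWs of length $n$ started at $z$. Put
\[
H' := [a-(n'-n),b+(n'-n)]\cap\Z, \quad H := H'+[-n,n], \quad L := \lceil n^{1/4}\rceil,
\]
so that particles at time $n$ outside $H'$ cannot reach $B$ before time $n'$ and the Lemma~\ref{l:couple} buffer condition $\{z:\dist(z,H')\le n\}\subset H$ holds. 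Let $\{C_i\}$ be an $L$-paving of $H$ and consider the density event $\mathcal{E} := \{\sum_{z\in C_i} N(z,0) \ge \rho L \text{ for all } i\}$. A Chernoff bound for each $\sum_{z\in C_i} N(z,0) \sim \Poisson(\lambda L)$ deviating below its mean by $(\lambda-\rho)L = \rho n^{-1/16}L$ gives exponent $\sim -\rho n^{1/8}/2$, and a union bound over the $|H|/L = O((\textnormal{per}(B)+n)/n^{1/4})$ blocks yields $\P^\lambda(\mathcal{E}^c) \le c(\textnormal{per}(B)+n)\,e^{-c\rho n^{1/8}}$. Conditional on $\mathcal{G}_0\cap\mathcal{E}$, apply Lemma~\ref{l:couple} with the $\lambda$-dense starting configuration in $H$ and target intensity $\rho$: one obtains a PPP $\tilde\eta_n$ on $\Z$ of intensity $\rho$ jointly coupled with $(S^{z,i}_n)_{z\in H,\,i}$ such that $\1{H'}\eta_n \ge \1{H'}\tilde\eta_n$ on a success event $\mathcal{S}$ with conditional probability at least $1 - |H'|\exp\{-\rho n^{-1/16}L + c\lambda L^2\log n/\sqrt n\}$. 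With $L=\lceil n^{1/4}\rceil$ this exponent is $-c\rho n^{3/16} + O(\rho\log n) \le -c\rho n^{1/8}$ for $n$ large enough.

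\textbf{Conclusion and main obstacle.} On $\mathcal{E}\cap\mathcal{S}$, pair each atom of $\tilde\eta_n|_{H'}$ with a distinct atom of $\eta_n|_{H'}$ at the same site and declare the original forward walk from time $n$ to $n'$ attached to the chosen $\eta_n$-atom to be the walk of the matched $\tilde\eta_n$-atom; for atoms of $\tilde\eta_n$ outside $H'$ (which cannot reach $B$), use fresh independent lazy SRWs. This produces a trace $\tilde{\mathcal{T}}$ with $\tilde{\mathcal{T}}\cap B \subset \mathcal{T}\cap B$, and, since particles outside $H'$ cannot enter $B$, the pair $(\tilde{\mathcal{T}}\cap B,(U_y)_{y\in B})$ has the same joint law as $(\mathcal{T}\cap B,(U_y)_{y\in B})$ under $\P^\rho$. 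Setting $\tilde f_2 := f_2(\tilde{\mathcal{T}},\xi)$, non-increasingness of $f_2$ gives $\tilde f_2 \ge f_2$ on $\mathcal{E}\cap\mathcal{S}$, while the fact that $\tilde\eta_n$ has marginal $\Poisson(\rho)$ conditional on every value of $\mathcal{G}_0$ makes $\tilde f_2$ unconditionally independent of $f_1$, so that $\E^\lambda[f_1\tilde f_2] = \E^\lambda[f_1]\,\E^\rho[f_2]$. Combining the estimates,
\[
\E^\lambda[f_1 f_2] \le \E^\lambda[f_1\tilde f_2] + \P^\lambda(\mathcal{E}^c) + \P^\lambda(\mathcal{E}\cap\mathcal{S}^c) \le \E^\lambda[f_1]\,\E^\rho[f_2] + c(\textnormal{per}(B)+n)\,e^{-c\rho n^{1/8}},
\]
as required. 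The main obstacle is organizing the soft local time construction underlying Lemma~\ref{l:couple} so that $\tilde\eta_n$ has marginal $\Poisson(\rho)$ for \emph{every} realisation of $\mathcal{G}_0$ (this is what forces the unconditional independence of $\tilde f_2$ from $f_1$) while simultaneously delivering pointwise domination on $H'$; balancing the Poisson concentration error against the soft local time error essentially forces $L$ into the range $n^{3/16} \ll L \ll n^{7/16}/\log n$, for which $L=\lceil n^{1/4}\rceil$ is a convenient choice.
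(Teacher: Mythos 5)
Your proposal follows the same architecture as the paper's proof: define $H$, $H'$ as the cone cross-sections at times $0$ and $n$, choose an $L$-paving of $H$ with $L\asymp n^{1/4}$, introduce a density event $\mathcal{E}$ controlled by Poisson concentration, condition on the time-$0$ configuration in $H$ (via the Markov property), and apply the soft local time coupling (Lemma~\ref{l:couple}) to dominate a fresh $\Poisson(\rho)$ cloud by $\eta_n$ on $H'$; monotonicity of $f_2$ then lets you replace $f_2$ by an independent copy $\tilde f_2$ at density $\rho$. The paper phrases the conclusion as a pointwise bound on $\E^{\rho_1}[f_2\mid N(y),y\in H]$ rather than as an explicit coupling $\tilde f_2$, but these are the same calculation.

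There is, however, a genuine quantitative slip that needs repair: your density threshold and your coupling gap consume the \emph{same} budget $(\lambda-\rho)L$ twice. You define $\mathcal{E}$ with threshold $\rho L$, so on $\mathcal{E}$ the initial configuration is only $\rho$-dense; but Lemma~\ref{l:couple} applied to a $\rho$-dense configuration with target intensity $\rho'=\rho$ gives a coupling error bound $|H'|\exp\{-(\rho-\rho)L + c\rho L^2\log n/\sqrt n\}\ge|H'|$, which is vacuous. Conversely, if you raise the threshold to $\lambda L$ to justify the quoted exponent $-\rho n^{-1/16}L$, the Chernoff bound on $\mathcal{E}^c$ becomes trivial (probability $\approx\tfrac12$ of a Poisson falling below its mean). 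The fix — and what the paper does — is to split the gap: take an intermediate density $\rho_{1/2}:=\rho(1+\tfrac12 n^{-1/16})$ and set $\mathcal{E}=\{\sum_{z\in C_i}N(z,0)\ge\rho_{1/2}L\ \forall i\}$. Then the Chernoff deviation $(\lambda-\rho_{1/2})L=\tfrac12\rho n^{-1/16}L$ and the coupling gap $(\rho_{1/2}-\rho)L=\tfrac12\rho n^{-1/16}L$ are both of order $\rho n^{3/16}$, each producing an error of order $e^{-c\rho n^{1/8}}$, exactly as claimed. With this correction your argument goes through; you may also want to spell out more carefully that $\tilde\eta_n$ is a function of the auxiliary Poisson process alone (hence independent of $\mathcal{G}_0$ regardless of whether the domination event holds), and that the forward increments attached to matched atoms remain i.i.d.\ given the matching — these points are handled implicitly by the soft local time machinery and by the paper's choice to work with the conditional expectation $\E^{\rho_1}[f_2\mid N(y),y\in H]$ instead of an explicit $\tilde f_2$.
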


Note that, by the FKG-inequality, we have $\mathbb{E}^{\rho}[f_1 f_2] \geq \mathbb{E}^{\rho}[f_1]\,
\mathbb{E}^{\rho}[f_2]$. Thus, the bound in \eqref{e:dec} shows that $f_1,f_2$ are almost
uncorrelated.

To prove Theorem~\ref{t:decouple}, we need the following definition. For a box $B = ([a,b]
\times [n, n']) \cap \Z^2$ in the space-time upper halfplane $\Z \times \Z_+$, let $\mathcal{C}(B)$
be the cone associated with $B$, defined as (see Fig.~\ref{f:light_cone})
\begin{equation}
\label{e:cone}
\mathcal{C}(B) = \bigcup_{k = 0}^\infty \big([a-k, b+k] \times \{n'-k\}\big).
\end{equation}
This cone can be interpreted as the set of points that can reach $B$ while traveling at
speed at most one, and encompasses every space-time point that can influence the state of $B$.

Given a box $B$ and a halfplane $D$ as in Theorem~\ref{t:decouple}, we denote by $H$
and $H'$ the separating segments (see Fig.~\ref{f:light_cone})
\begin{equation}
\label{e:Hline}
H' = \mathcal{C}(B) \cap \big( \Z \times \{n\} \big), \qquad
H  = \mathcal{C}(B) \cap \big( \Z \times \{0\} \big).
\end{equation}
The next lemma states a Markov-type property.

\begin{figure}[htbp]
\centering
\begin{tikzpicture}[scale=1]
\draw[fill, color=gray!20!white] (0,0) rectangle (12,2);
\draw[dashed] (0,2) -- (12,2);
\draw[fill, color=gray!40!white] (0,0) -- (5.5,5.5) -- (6.5,5.5) -- (12,0) -- (0,0);
\draw (0,0) -- (5.5,5.5) -- (6.5,5.5) -- (12,0);
\draw[thick] (5.5,4.5) rectangle (6.5,5.5);
\draw[thick] (2,2) -- (10,2);
\draw[thick] (2.1,2.2) -- (2,2.2) -- (2,1.8) -- (2.1,1.8);
\draw[thick] (9.9,2.2) -- (10,2.2) -- (10,1.8) -- (9.9,1.8);
\foreach \x in {2,...,10} {\draw[thick] (\x,1.8) -- (\x,2.2);}
\draw (2,3.5) edge[out=330,in=120,->] (3.5,2);
\node[left] at (2,3.5) {$H$};
\draw[thick] (4.5,4.5) -- (7.5,4.5);
\draw[thick] (4.6,4.7) -- (4.5,4.7) -- (4.5,4.3) -- (4.6,4.3);
\draw[thick] (7.4,4.7) -- (7.5,4.7) -- (7.5,4.3) -- (7.4,4.3);
\draw[dashed] (0,4.5) -- (4.5,4.5);
\node[left] at (0,4.5) {$n$};
\draw[dashed] (0,5.5) -- (5.5,5.5);
\node[left] at (0,5.5) {$n'$};
\draw (4,5) edge[out=0,in=120,->] (5,4.5);
\node[left] at (4,5) {$H'$};
\node[below right] at (0,2) {$D$};
\node[left] at (0,2) {$0$};
\node[below left] at (6.5,5.5) {$B$};
\end{tikzpicture}
\caption{Box $B$ and lower halfplane $D$ as in Theorem~\ref{t:decouple}.
The dark gray area corresponds to the cone associated with $B$, containing
the separating segments $H$ and $H'$.}
\label{f:light_cone}
\end{figure}
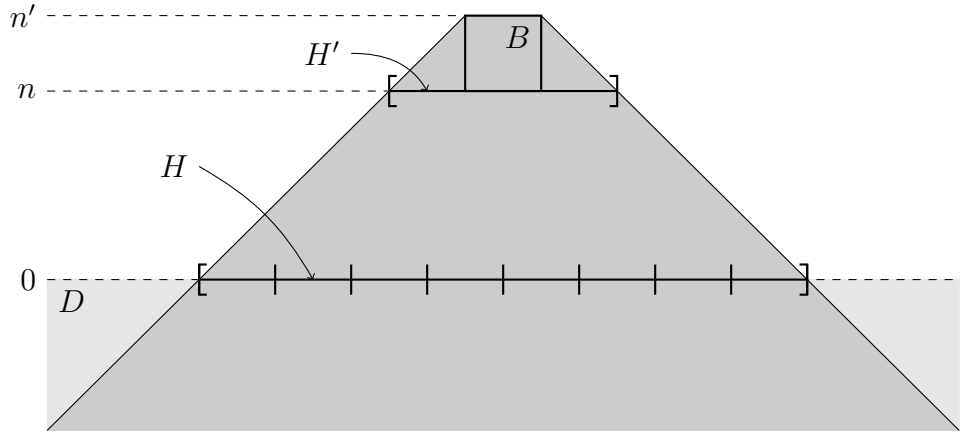


\begin{lemma}
\label{l:markov_cone}
Let $B = [a,b] \times [n, n'] \cap \Z^2$, and let $\mathcal{C}(B)$ and $H$ be as in
{\rm (\ref{e:cone}--\ref{e:Hline})}. Then, for any function $f\colon\, \Omega \to \R$
with support in $B$,
\begin{equation}
\label{e:markov_cone}
\mathbb{E}\Big[f \Big| N(y), y \in D \Big] = \mathbb{E} \Big[f \Big| N(y), y \in H \Big],
\end{equation}
where $N(y)$ is the number of trajectories crossing $y$ (recall \eqref{e:defIPS}).
\end{lemma}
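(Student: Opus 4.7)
The conceptual content of this lemma is a propagation-of-information argument: lazy simple random walks move with speed at most one, so any trajectory $w \in W$ whose trace meets $B = [a,b] \times [n,n']$ must have $w(0) \in [a-n',\, b+n'] = H$. Consequently, $\mathcal{T}(\omega) \cap B = \mathcal{T}(\omega^+) \cap B$, where I split the PPP $\omega$ into two independent pieces by restriction to trajectories in $W_H := \bigcup_{y \in H} W_y$ and its complement:
\[
\omega^+ = \omega|_{W_H}, \qquad \omega^- = \omega|_{W \setminus W_H}.
\]
Standard thinning/restriction of Poisson point processes implies that $\omega^+$ and $\omega^-$ are independent. Since $f$ has support in $B$ (Definition~\ref{d:support}) and $B$ sits in the upper half-plane where the $U$-variables are independent of $\omega$, this first step already shows that $f$ is measurable with respect to $\sigma(\omega^+) \vee \sigma(U_v : v \in B)$.

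Next I would invoke the Markov property of the two-sided lazy SRW: by construction in Section~\ref{ss:DRE}, for every trajectory $w$ the past $w^-:=(w(k))_{k \leq 0}$ and the future $w^+:=(w(k))_{k \geq 0}$ are independent once $w(0)$ is fixed. Since $B \subset \mathbb{Z} \times \mathbb{N}$, the intersection $\mathrm{Trace}(w) \cap B$ depends only on $w^+$; therefore $f$ is measurable with respect to the $\sigma$-algebra $\mathcal{H}^+$ generated by the collection $\{(w(0), w^+) : w \in \omega^+\}$ together with $U|_B$. Concretely, given $(N(y,0))_{y \in H}$ one can realise $\mathcal{H}^+$ by running, at each $y \in H$, exactly $N(y,0)$ independent one-sided lazy SRW futures starting at $y$; these are jointly independent of the collection of pasts $\{w^- : w \in \omega^+\}$ and of $\omega^-$ in its entirety.

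Finally, I would unpack $\mathcal{G} := \sigma(N(y) : y \in D)$. Writing $N^+$ and $N^-$ for the counts coming from $\omega^+$ and $\omega^-$ respectively, one has $N(y,0) = N^+(y,0)\mathbf{1}_{y \in H} + N^-(y,0)\mathbf{1}_{y \notin H}$ (since $w \in \omega^\pm$ fixes which side of $H$ holds $w(0)$), while for $k<0$ the variable $N(y,k) = N^+(y,k)+N^-(y,k)$ is read off from the pasts of the two pieces. Thus $\mathcal{G}$ is contained in the $\sigma$-algebra generated by (the starting positions and pasts of) $\omega^+$ and by $\omega^-$. Combining this with the conditional independence statement from the previous paragraph yields
\[
\E\bigl[f \,\big|\, \mathcal{G}\bigr] = \E\bigl[f \,\big|\, \sigma(N(y,0) : y \in H)\bigr] = \E\bigl[f \,\big|\, \sigma(N(y) : y \in H)\bigr],
\]
where the second equality is because on $H \subset \mathbb{Z} \times \{0\}$ the generators are literally the same. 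This is the claim.

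The main (mild) obstacle is the bookkeeping in the second step: one has to make sense of ``futures of a random number of trajectories'' cleanly. I would do this by re-constructing $\omega^+$ from independent ingredients, namely the counts $(N(y,0))_{y \in H}$ together with independent families of lazy SRW pasts and futures emanating from each site of $H$ (mirroring \eqref{e:defenvpart}--\eqref{e:randomPPP} but splitting each two-sided walk into its past and future), so that the desired conditional independence becomes transparent from the construction itself.
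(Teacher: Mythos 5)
Your proof is correct and takes essentially the same approach as the paper: the speed-one (nearest-neighbour) constraint forces every trajectory meeting $B$ to pass through $H$ at time $0$, $f$ is determined by the occupation counts inside $B$ (plus $U|_B$), and the past/future Markov split of the two-sided walks gives the conditional independence. The paper's own proof is a two-line compression of exactly this argument phrased via the doubly-indexed walks $S^{z,i}$ rather than the Poisson-point-process restriction $\omega = \omega^+ + \omega^-$; your version simply spells out the bookkeeping that the paper leaves implicit.
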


\begin{proof}
Since $y \in \cT$ if and only if $N(y) \ge 1$, $f$ is a function of $(U_y, N(y))_{y \in B}$.
Noting that $(N(y))_{y \in B}$ is a function of $(N(y))_{y \in H}$ and $(S^{y,i}_n)_{y \in H, i \in \N, n \in \Z_+}$
only, we get the claim.
\end{proof}


\subsection{Proof of Theorem~\ref{t:decouple}}
\label{ss:PrThDecouple}

In the following we will abuse notation by writing $H$,$H'$ to denote
also the projection on $\Z$ of these sets.
We start by choosing an $L$-paving $\{I_j\}_{j \in J}$ of $H$, composed
of segments of length $L = \lfloor n^{1/4} \rfloor$ (the choice of exponent $1/4$ is arbitrary:
any choice in $(0,\tfrac12)$ will do). Note that
\begin{equation}
\label{e:cover_H}
\text{no more than $1 + (\textnormal{per}(B) + 2n)/L$ such segments are needed to cover $H$,}
\end{equation}
i.e., we may take $|J| \le 1 + (\textnormal{per}(B) + 2n)/L$. Note also that $|H'| \le \textnormal{per}(B)+1$.

We take $\useconstant{c:largendecouple}$ so large that
\begin{equation}
\label{e:hypot_constant}
n > \useconstant{c:largendecouple} \;\; \Rightarrow \;\; L > n^{1/4}/2, \;\;
n > \useconstant{c:couple}L^2 \; \text{ and } \; n^{1/8} > 16 \useconstant{c:couple_exp} \log n.
\end{equation}

Abbreviate $\rho_r = \rho (1 + rn^{-1/16}) $ for $r \in [0,1]$. Note that the two densities
appearing in the statement of Theorem~\ref{t:decouple} are $\rho = \rho_0$ and $\rho(1 + n^{-1/16})
= \rho_1$. We introduce the events
\begin{equation}
\label{e:event_D}
\mathcal{E} = \Big\{ \text{at least $\rho_{\mtiny{1/2}} L$ trajectories cross $H \cap I_j$ for all $j \in\N$} \Big\}.
\end{equation}
Then, by Lemma~\ref{l:markov_cone} and $f_1, f_2 \leq 1$,
\begin{equation}
\label{e:f1f2}
\begin{array}{e}
\mathbb{E}^{\rho_1}[f_1 f_2]
& \leq & \mathbb{P}^{\rho_1} [\mathcal{E}^c]
+ \mathbb{E}^{\rho_1} \Big[f_1 1_{\mathcal{E}} \mathbb{E}^{\rho_1}
\big[f_2 \big| N(y),y \in D \big]\Big]\\[0.3cm]
& = & \mathbb{P}^{\rho_1} [\mathcal{E}^c]
 + \mathbb{E}^{\rho_1} \Big[f_1 1_{\mathcal{E}} \mathbb{E}^{\rho_1}
 \big[f_2 \big| N(y),y \in H \big] \Big].
\end{array}
\end{equation}
To estimate the first term in the right-hand side of \eqref{e:f1f2}, we do a moderate deviation
estimate. For every $\theta>0$, by \eqref{e:cover_H},
\begin{equation}
\label{e:exp_cheby}
\begin{split}
\mathbb{P}^{\rho_1}[\mathcal{E}^c] & \leq \big| \{j\in\N\colon\,I_j
\cap H \neq \emptyset\} \big| \,\mathbb{P}^{\rho_1}
\Big[\sum_{y \in I_j} N(y) < \rho_{\mtiny{1/2}}L\Big]\\
& \leq \Big(1 + L^{-1}(\textnormal{per}(B) + 2n) \Big)
\exp \Big\{ \rho_{\mtiny{1/2}}L\theta - \rho_1 L (1-e^{-\theta}) \Big\},
\end{split}
\end{equation}
since $N(y)$ has law Poisson($\rho_1$).
Since $1-e^{-\theta} \ge \theta - \theta^2$, we have
\begin{equation}
\begin{split}
\exp \Big\{ \rho_{\mtiny{1/2}}L\theta -
& \rho_1 L (1-e^{-\theta}) \Big\} \leq
\exp \Big\{ \rho_{\mtiny{1/2}} L \theta - \rho_1 L \theta + \rho_1 L \theta^2 \Big\}\\
& = \exp \Big\{ L\theta\rho_{\mtiny{0}}
\big[ (1 + \tfrac 12 n^{-1/16}) - (1 + n^{-1/16}) + 2 \theta \big] \Big\}\\
& = \exp \big\{ L\theta \rho_{\mtiny{0}} [-\tfrac 12 n^{-1/16} + 2\theta]\big\}.
\end{split}
\end{equation}
Choosing $\theta = \tfrac18 n^{-1/16}$, we see that the above equals
\begin{equation}
\label{e:exp_taylor}
\begin{split}
\exp \{ - \tfrac14 L \theta \rho_{\mtiny{0}} n^{-1/16} \}
= \exp \{ - \tfrac{1}{32} \rho_{\mtiny{0}} L n^{-1/8} \}
\leq \exp \{ - \tfrac{1}{64} \rho_{\mtiny{0}} n^{1/8}\}.
\end{split}
\end{equation}
where the second inequality uses \eqref{e:hypot_constant}.

Next, we employ Lemma~\ref{l:couple} to estimate the second term in the right-hand side
of \eqref{e:f1f2}. To that end, note that on the event $\mathcal{E}$ the collection of points
that hit $H$ is $\rho_{\mtiny{1/2}}$-dense with respect to the $L$-paving $\{I_j\}_{j\in J}$.
Moreover, $\{z \in \Z \colon \dist(z,H') \le n \} \subset H$. Therefore, applying Lemma~\ref{l:couple}
with the densities $\rho_{\mtiny{0}} \le \rho_{\mtiny{1/2}}$ (recall \eqref{e:hypot_constant}),
we have
\begin{align}
\label{e:apply_couple}
\mathbbm{1}_{\mathcal{E}} \mathbb{E}^{\rho_1} \big[ f_2  \mid  N(y), y \in H \big]
 & \leq \mathbb{E}^{\rho_{\mtiny{0}}} \big[f_2\big]
+ |H'| \exp \Big\{ - (\rho_{\mtiny{1/2}} - \rho_{\mtiny{0}})L
+ \frac{\useconstant{c:couple_exp}\rho_{\mtiny{1/2}}L^2 \log n}{\sqrt{n}} \Big\}\nonumber\\
& \leq \mathbb{E}^{\rho_{\mtiny{0}}} \big[f_2\big]
+ |H'| \exp \big\{ - (\tfrac{\rho_{\mtiny{0}}}{2}) n^{-1/16} L
+ 2\useconstant{c:couple_exp} \rho_{\mtiny{0}} \log n \big\}.
\end{align}
By \eqref{e:hypot_constant}, we have
\begin{equation}
\text{r.h.s.} \eqref{e:apply_couple} \leq  \mathbb{E}^{\rho_{\mtiny{0}}} \big[f_2\big]
+ |H'| \; e^{ - \tfrac18 \rho_{\mtiny{0}} n^{1/8}}.
\end{equation}

Combine (\ref{e:f1f2}--\ref{e:exp_cheby}), to obtain
\begin{equation}
\label{e:final_f1f2}
\begin{split}
\mathbb{E}^{\rho_1}[f_1 f_2]
& \leq c \Big(1 + L^{-1}(\textnormal{per}(B) + 2n)\Big)\,e^{- c \rho_{\mtiny{0}} n^{1/8}}\\
& \quad + \mathbb{E}^{\rho_1}[f_1] \Big( \mathbb{E}^{\rho_{\mtiny{0}}}[f_2]
+ |H'| \, e^{- c \rho_{\mtiny{0}} n^{1/8}} \Big)\\
& \leq \mathbb{E}^{\rho_{1}}[f_1] \,\mathbb{E}^{\rho_{\mtiny{0}}}[f_2]
+ c(\textnormal{per}(B) + n)\, e^{ - c \rho_{\mtiny{0}} n^{1/8}},
\end{split}
\end{equation}
which yields the claim in \eqref{e:dec}.

\begin{remark}
It is important that the constants in Theorem {\rm \ref{t:decouple}} do not depend on $\rho$, in
accordance with our convention. This is crucial for our proof in Section~{\rm \ref{s:renorm}}
to work. Moreover, by translation invariance we can apply the result for general boxes $B$ and
half-spaces $D$ as long as their vertical distance is at least $\useconstant{c:largendecouple}$.
\end{remark}

\begin{remark}
\label{r:lazy2}
The proof of Theorem~{\rm ~\ref{t:decouple}} holds almost without modification
in the more general case of aperiodic symmetric random walks with bounded steps.
Indeed, since properties \eqref{e:localclt}--\eqref{e:SRW2} still hold in this case,
the proofs of Lemmas~{\rm \ref{l:integration}--\ref{l:couple}} immediately extend.
The only change that is needed is in the definition of $H$, which must be made
larger according to the bound on the size of the random walk steps. 

\noindent
In the case of bipartite random walks, however, the aforementioned lemmas are no longer
true as stated. Indeed, \eqref{e:SRW1} does not hold for such walks. Nonetheless,
Theorem~{\rm \ref{t:decouple}} is still true in this situation. The proof can be adapted
through the following two steps:
\begin{enumerate}
\item[(1)]
In the statement of Lemmas~{\rm \ref{l:integration}--\ref{l:couple}}, we suppose
that the collection $(x_j)_{j \in J}$ is dense in both of the sets $C_i \cap 2\Z$ and
$C_i \cap \Z \setminus 2\Z$. Since \eqref{e:SRW1} still holds when $x,x'$ have
the same parity, and \eqref{e:localclt} and \eqref{e:SRW2} are still valid, the proofs
of the lemmas go through with this modification.
\item[(2)]
In the proof of Theorem~{\rm \ref{t:decouple}}, we modify $\mathcal{E}$ to be the
event where enough trajectories cross both of the sets $H \cap I_i \cap 2\Z$ and
$H \cap I_i \cap \Z \setminus 2\Z$, which allows us to use
Lemmas~{\rm \ref{l:integration}--\ref{l:couple}} with the new statements.
\end{enumerate}

\noindent
In the case of continuous-time symmetric random walks with bounded jumps,
\eqref{e:localclt}--\eqref{e:SRW2} and Lemma~\ref{l:integration} remain true.
However, the random walk is no longer almost surely Lipschitz, and this property 
is used in Lemmas~\ref{l:couple} and \ref{l:markov_cone}, Theorem~\ref{t:decouple}
and in several other places throughout the paper. Nonetheless, the random walk is 
still Lipschitz with high probability, and this is enough to adapt all arguments to this 
case.
\end{remark}


\section{Tail estimate}
\label{s:inequalities}

The following lemma consists of a simple estimate on the tail of an infinite series,
used in \eqref{e:PBkc}. Its proof is inspired by similar arguments in M\"orters and
Peres~\cite[Lemma~12.9, p.\ 349]{MoPe}.

\begin{lemma}
\label{l:ineq}
Let $a$ be a positive integer. Then, for all $\beta > 0$ there exists a $c = c(\beta)$ such that
\begin{equation}
\sum_{l>a} l^{\beta} e^{-\log^{3/2}l} \leq ca^{\beta+1}e^{-\log^{3/2}a}.
\end{equation}
\end{lemma}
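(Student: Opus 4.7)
The plan is to compare the sum to an integral and exploit the concavity of $\phi(u) = (\beta+1)u - u^{3/2}$ after the substitution $u = \log l$.

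First I would verify that $f(l) = l^{\beta} e^{-\log^{3/2} l}$ is eventually decreasing: writing $\log f(l) = \beta \log l - \log^{3/2} l$, one computes $(\log f)'(l) = l^{-1}\bigl(\beta - \tfrac{3}{2}\sqrt{\log l}\bigr)$, which is negative once $\log l > (2\beta/3)^2$. Hence there exists $l_0 = l_0(\beta)$ such that $f$ is decreasing on $[l_0,\infty)$, and for any $a \geq l_0$ one has
\begin{equation*}
\sum_{l > a} f(l) \leq \int_a^\infty f(x)\, dx.
\end{equation*}

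Next I would estimate the integral by substituting $x = e^u$, which gives
\begin{equation*}
\int_a^\infty x^{\beta} e^{-\log^{3/2} x}\, dx = \int_{\log a}^\infty e^{\phi(u)}\, du,
\qquad \phi(u) = (\beta+1)u - u^{3/2}.
\end{equation*}
Since $\phi''(u) = -\tfrac{3}{4\sqrt{u}} < 0$ on $(0,\infty)$, the function $\phi$ is concave, and $\phi'(u) = (\beta+1) - \tfrac{3}{2}\sqrt{u} \to -\infty$ as $u \to \infty$. Choose $a_0 = a_0(\beta) \geq l_0$ large enough so that $\phi'(\log a_0) \leq -1$. Then for every $a \geq a_0$ and $u \geq \log a$, concavity yields
\begin{equation*}
\phi(u) \leq \phi(\log a) + \phi'(\log a)(u - \log a) \leq \phi(\log a) - (u - \log a),
\end{equation*}
so that
\begin{equation*}
\int_{\log a}^\infty e^{\phi(u)}\, du
\leq e^{\phi(\log a)} \int_0^\infty e^{-v}\, dv
= a^{\beta+1} e^{-\log^{3/2} a}.
\end{equation*}
This proves the inequality with constant $1$ for $a \geq a_0(\beta)$.

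Finally, for $a \in \{1,\dots,a_0-1\}$ there are only finitely many cases. In each of them, the right-hand side $a^{\beta+1} e^{-\log^{3/2} a}$ is bounded below by a positive constant depending only on $\beta$, while the left-hand side is bounded above by the convergent series $\sum_{l\geq 1} l^{\beta} e^{-\log^{3/2} l} < \infty$. Enlarging $c(\beta)$ to absorb these finitely many cases yields the claim. The only non-routine step is the use of concavity to linearize $\phi$ above $\log a$; otherwise the argument is a standard Laplace-type tail estimate.
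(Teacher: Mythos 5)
Your proof is correct, and it takes a genuinely different route from the paper's. Both proofs start by comparing the sum to the integral $\int_a^\infty x^\beta e^{-\log^{3/2}x}\,dx$ using eventual monotonicity of the summand, but then the two diverge. The paper introduces an explicit constant $D$ and the auxiliary function $f(a) = Da^{\beta+1}e^{-\log^{3/2}a} - \int_a^\infty x^\beta e^{-\log^{3/2}x}\,dx$, and proves $f\geq 0$ on $[\alpha_0,\infty)$ by a three-step monotonicity argument: $f(\alpha_0)>0$, $f'<0$, and $f(a)\to 0$. Your proof instead performs the substitution $u=\log x$, rewrites the integrand as $e^{\phi(u)}$ with $\phi(u)=(\beta+1)u-u^{3/2}$, and exploits concavity of $\phi$ to dominate it by the tangent line at $\log a$ (once the slope is $\leq -1$), giving the clean bound $a^{\beta+1}e^{-\log^{3/2}a}$ with constant $1$ for $a$ large. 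Your route is the more transparent one: it is a standard Laplace-type tail estimate and makes the exponential decay mechanism explicit, whereas the paper's auxiliary-function argument is somewhat opaque about why the constant $D$ works. Both proofs handle small $a$ by absorbing finitely many cases into the constant, and both are equally elementary; your version is arguably easier to check and to generalize to other exponents in place of $3/2$.
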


\begin{proof}
It is enough to consider the case $a \ge \alpha_0 = e^{(\beta+1)^2}$.
If $g(x) = x^\beta \exp\{-\log^{3/2}x\}$, then
$g'(x)=x^{\beta-1}\exp\{-\log^{3/2}x\}[\beta - \tfrac32\sqrt{\log x}] < 0$ for all $x \ge \alpha_0$ because
$\alpha_0 > e^{(4/9) \beta^2}$. It follows that
\begin{equation}
\sum_{l>a} l^{\beta} e^{-\log^{3/2}l} \leq \int_{a}^{\infty} x^{\beta} e^{-\log^{3/2}x}dx.
\end{equation}
Let
\begin{equation}
\begin{aligned}
D &= \max\left\{2\int_{\alpha_0}^{\infty}x^\beta e^{-\log^{3/2}x} dx, \frac{4}{\beta+1}\right\},\\
f(a) &= D a^{\beta+1} e^{-\log^{3/2}a}-\int_{a}^{\infty}x^\beta e^{-\log^{3/2}x}dx.
\end{aligned}
\end{equation}
We claim that $\lim_{a\to\infty} f(a) = 0$, $f(\alpha_0) > 0$ and $f'(a)<0$ for all $a\geq \alpha_0$.
The first claim is immediate. The second claim follows from
\begin{equation}
\begin{split}
f(\alpha_0)
&= De^{{\log{\alpha_0}\{(\beta+1)-\sqrt{\log{\alpha_0}} \} }}
-\int_{\alpha_0}^{\infty}x^\beta e^{-\log^{3/2}x}dx\\
&= D  -\int_{\alpha_0}^{\infty}x^\beta e^{-\log^{3/2}x}dx >0,
\end{split}
\end{equation}
where the last inequality follows from $D \geq 2\int_{\alpha_0}^{\infty}x^\beta e^{-\log^{3/2}x} dx$.
To get the third claim, note that
\begin{equation}
f'(a) =  a^\beta e^{-\log^{3/2}a}\, [(\beta+1) D - \tfrac32 D\sqrt{\log{a}} +1]
\end{equation}
and, for $a \geq \alpha_0$, we have that $[(\beta+1)D - \tfrac32D\sqrt{\log{a}} +1] \leq -\tfrac12
D (\beta+1) + 1 \leq -1$, where the last inequality follows from $D \geq {4}/({\beta+1})$.

In particular, $f(a) \ge 0$ for all $a \geq \alpha_0$, and hence
\begin{equation}
\sum_{l>a} l^{\beta} e^{-\log^{3/2}l} \leq \int_{a}^{\infty} x^{\beta}
e^{-\log^{6/5}x}\,dx \leq Da^{\beta+1} e^{-\log^{3/2}a},
\end{equation}
which concludes the proof.
\end{proof}


\section{Rate of convergence}
\label{s:conv_rate}

In this section we state and prove a basic fact about independent random variables that
is used in the proof of Theorem~\ref{thm:SLLN+CLT+LD}(c) in Section~\ref{ss:limittheorems}.

\begin{lemma}
\label{l:convrate}
Let $X_i$, $i \in \N$, be independent random variables with joint law $\P$ such that
\begin{equation}
\label{e:convrate1}
\E[X_i]=0 \; \forall \; i \in \N, \qquad \quad \sup_{i \in \N}
\E\left[ \exp\{K (\log^+ |X_i|)^\gamma \} \right] < \infty
\end{equation}
for some $K>0$ and $\gamma > 1$, where $\log^+ x = \max(\log x, 0)$. Then, for all
$\varepsilon > 0$, there exists a $c>0$ such that
\begin{equation}
\label{e:convrate2}
\P \left( \exists \; k \ge n \colon \left|\sum_{i=1}^k X_i \right| > \varepsilon k \right)
\le c^{-1} e^{-c \log^\gamma n} \;\; \forall \; n \in \N.
\end{equation}
\end{lemma}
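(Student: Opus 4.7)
By Markov's inequality, the hypothesis \eqref{e:convrate1} yields a constant $C>0$ with $\sup_i \P(|X_i|>t) \le C\,e^{-K \log^\gamma t}$ for $t \ge 1$, and in particular $\sigma^2 := \sup_i \E[X_i^2] < \infty$ (indeed every moment is finite). My plan is to combine dyadic peeling, truncation, and a Bernstein-type maximal inequality, using the fact that $\gamma>1$ forces $\log^\gamma N \gg \log N$.

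First I would dyadically decompose the left-hand side of \eqref{e:convrate2}: writing $N_j := 2^j n$ and $S_k := \sum_{i=1}^k X_i$, on the interval $k \in [N_j, N_{j+1})$ one has $\varepsilon k \ge \varepsilon N_j$, so it suffices to bound each $\P(\max_{k \le N_{j+1}} |S_k| > \varepsilon N_j)$ and then sum over $j \ge 0$. Inside the $j$-th block I would truncate at level $M_j := N_j^{1/2}$. Let $A_j = \{\exists\, i \le N_{j+1} \colon |X_i| > M_j\}$; the union bound and the tail estimate give
\[
\P(A_j) \le N_{j+1}\, C\, e^{-K \log^\gamma M_j} \le C' \exp\!\big(\log N_{j+1} - 2^{-\gamma} K \log^\gamma N_j\big),
\]
which is at most $\exp(-c \log^\gamma N_j)$ for $N_j$ large because $\gamma>1$ makes $\log^\gamma N_j$ swamp $\log N_j$. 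Since $\E[X_i]=0$, the truncation bias is controlled by $|\E[X_i\1{|X_i|\le M_j}]| = |\E[X_i\1{|X_i|>M_j}]| \le \int_{M_j}^\infty \P(|X_1|>t)\,dt \le C'' e^{-c\log^\gamma M_j}$, which is at most $\varepsilon/4$ for $n$ large. Therefore, on $A_j^c$, the event in question reduces to $\max_{k \le N_{j+1}} |\tilde S_k| > \varepsilon N_j/2$ for the centred truncated partial sums $\tilde S_k$.

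The centred truncated variables are independent, bounded in absolute value by $2M_j$, and have variance at most $\sigma^2$, so the Bernstein maximal inequality (via Doob applied to the exponential submartingale) would yield
\[
\P\!\left(\max_{k \le N_{j+1}}|\tilde S_k| > \varepsilon N_j/2\right) \le 2 \exp\!\left(-\frac{c\,\varepsilon^2 N_j^2}{N_{j+1}\sigma^2 + M_j \varepsilon N_j}\right) \le 2\exp(-c' N_j^{1/2}),
\]
far smaller than the target decay. Summing over $j$, the Bernstein contributions give at most $C e^{-c n^{1/2}}$, which is negligible. For the truncation-error contributions, convexity of $x \mapsto x^\gamma$ yields $\log^\gamma N_j \ge \log^\gamma n + \gamma (\log n)^{\gamma-1}\, j \log 2$, so $\sum_{j\ge 0} e^{-c\log^\gamma N_j}$ is a geometric series with ratio $e^{-c\gamma (\log n)^{\gamma-1}\log 2} \to 0$ as $n\to\infty$, whence its sum is bounded by $2 e^{-c\log^\gamma n}$ for $n$ large. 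The finitely many small $n$ are absorbed into the constant $c^{-1}$.

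The main obstacle is choosing the truncation level $M_j$ in the window permitted by both error terms: $M_j$ must be large enough that $\P(A_j)\le e^{-c\log^\gamma N_j}$ (forcing roughly $M_j \ge N_j^{a}$ for some $a>0$), yet small enough that Bernstein delivers at least this much concentration for deviations of order $\varepsilon N_j$ (forcing roughly $M_j \le N_j/\log^\gamma N_j$). The assumption $\gamma>1$ guarantees that $\log^\gamma N_j$ is far smaller than $N_j^{1/2}$, so this window is comfortably large and the choice $M_j = N_j^{1/2}$ sits safely inside it.
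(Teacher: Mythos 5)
Your proof is correct, and it reaches the same conclusion as the paper by a genuinely different route. Both arguments truncate at scale $\sqrt{n}$ and apply Bernstein to the truncated, recentered sums, but the treatment of the truncated-away part is quite different. The paper truncates once and for all at level $\sqrt{n}$ and controls the remainder $X_i^{>(n)} = X_i\mathbbm{1}_{\{|X_i|>\sqrt n\}} - \E[X_i\mathbbm{1}_{\{|X_i|>\sqrt n\}}]$ by bounding the fourth moment of $\sum_{i\le k}X_i^{>(n)}$ via the Marcinkiewicz--Zygmund and Minkowski inequalities; Chebyshev then supplies a $k^{-2}$ factor that makes the sum over all $k\in\N$ converge. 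It also dispenses with a maximal inequality, simply union-bounding the Bernstein estimate over $k\ge n$. You instead peel off dyadic blocks $[N_j,N_{j+1})$ with block-adapted truncation level $M_j=N_j^{1/2}$, union-bound the event that any variable in the block exceeds $M_j$ (which is where the $e^{-c\log^\gamma N_j}$ decay enters, via $\gamma>1$ swamping the linear $\log N_{j+1}$), and then invoke a maximal form of Bernstein within each block. Your approach is more elementary (no $L^4$-moment machinery) and makes the provenance of the $\log^\gamma$ decay rate more transparent, at the small cost of needing the Doob/exponential-submartingale step to get the maximal bound. One small slip: $\E[|X_i|\mathbbm{1}_{\{|X_i|>M_j\}}]$ equals $M_j\,\P(|X_i|>M_j) + \int_{M_j}^\infty\P(|X_i|>t)\,dt$, not just the integral; but the dropped term is also $O(e^{-c\log^\gamma M_j})$, so your estimate survives.
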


\begin{proof}
Let
\begin{equation}
\label{prconvrate1}
X_i^{(n)} = X_i \mathbbm{1}_{\{|X_i| \le \sqrt{n}\}} - \E[X_i \mathbbm{1}_{\{|X_i| \le \sqrt{n}\}}].
\end{equation}
We claim that
\begin{equation}
\label{prconvrate2}
\P \left ( \exists \; k \in\N  \colon \left| \sum_{i=1}^k X_i - X_i^{(n)} \right| > \varepsilon k \right)
\le C e^{-c \log^\gamma n}.
\end{equation}
Indeed, since $\E[X_i] = 0$, setting
\begin{equation}
\label{prconvrate3}
X_i^{>(n)} = X_i \mathbbm{1}_{\{|X_i| > \sqrt{n}\}} - \E[X_i \mathbbm{1}_{\{|X_i| > \sqrt{n}\}}]
\end{equation}
we have $X_i - X^{(n)}_i = X^{>(n)}_i$ for all $i \in \N$. Moreover, by the Marcinkiewicz-Zygmund
and Minkowski inequalities,
\begin{equation}
\label{prconvrate4}
\E \left[  \left| \sum_{i=1}^k X_i^{>(n)} \right|^4\right]^{\frac12}
\le C \sum_{i=1}^k \E[|X_i^{>(n)}|^4]^{\frac12}
\le C k\, e^{-c \log^\gamma n}
\end{equation}
by \eqref{e:convrate1}. Therefore
\begin{equation}
\label{prconvrate5}
\P\left( \exists \; k \in \N \colon \left| \sum_{i=1}^k X_i^{>(n)} \right| > \varepsilon k \right)
\le C e^{-c \log^\gamma n} \sum_{k=1}^\infty  k^{-2}
\end{equation}
and \eqref{prconvrate2} follows. Thus, it suffices to prove \eqref{e:convrate2} for $X^{(n)}_i$.
To that end, we use Bernstein's inequality to write
\begin{equation}
\label{prcovrate6}
\P \left( \left| \sum_{i=1}^k X_i^{(n)} \right| > u \right) \leq 2 \exp \left\{ -c\, \frac{u^2}{k+ \sqrt{n} u} \right\}.
\end{equation}
Taking $u= \varepsilon k$, we obtain
\begin{align}
\label{prconvrate7}
\P \left( \exists\; k \ge n \colon \left| \sum_{i=1}^k X_i^{(n)} \right| > \varepsilon k \right)
\le 2 \sum_{k=n}^\infty e^{ -\frac{c}{\sqrt{n}} k}
= \frac{2e^{-c\sqrt{n}}}{1 - e^{ -\frac{c}{\sqrt{n}}}}
\le C e^{ -c \sqrt{n}},
\end{align}
which concludes the proof.
\end{proof}


\end{document}